\theoremstyle{plain}
\newtheorem{thm}{Theorem}[section]
\newtheorem{definition}[thm]{Definition}
\newtheorem{prop}[thm]{Proposition}
\newtheorem{lemma}[thm]{Lemma}
\newtheorem{cor}[thm]{Corollary}
\theoremstyle{definition}
\newtheorem{remark}[thm]{Remark}
\newtheorem{example}[thm]{Example}
\numberwithin{equation}{section}
\newcommand{\A}{{\mathcal A}}
\newcommand{\B}{{\mathcal B}}
\newcommand{\bA}{{\bf A}}
\newcommand{\D}{{\mathcal D}}
\newcommand{\bD}{{\bf D}}
\newcommand{\F}{{\mathcal F}}
\newcommand{\T}{{\mathbb T}}
\newcommand{\K}{{\mathcal K}}
\renewcommand{\L}{{\mathcal L}}
\renewcommand{\O}{{\mathcal O}}
\newcommand{\p}{{\mathfrak p}}
\renewcommand{\P}{{\mathcal P}}
\newcommand{\M}{{\mathcal M}}
\newcommand{\m}{{\mathfrak m}}
\newcommand{\OO}{\text{O}}
\newcommand{\W}{{\mathcal W}}
\newcommand{\Z}{{\mathbb Z}}
\newcommand{\Q}{{\mathbb Q}}
\newcommand{\C}{{\mathbb C}}
\newcommand{\Fp}{{\mathbb F}_p}
\newcommand{\Zp}{\Z_p}
\newcommand{\Zpx}{\Z_p^\times}
\newcommand{\Cpx}{\C_p^\times}
\newcommand{\Qp}{\Q_p}
\newcommand{\Cp}{\C_p}
\newcommand{\lra}{\longrightarrow}
\newcommand{\mat}{\begin{pmatrix} a & b \\ c & d \end{pmatrix}}
\newcommand{\smallmat}{\bigl( \begin{smallmatrix} a & b \\ c & d \end{smallmatrix} \bigr)}
\newcommand{\sigop}{\Sigma_0(p)}
\newcommand{\psmallmat}[4]{\left( \begin{smallmatrix} #1 & #2 \\ #3 & #4 \end{smallmatrix} \right)}
\newcommand{\smatrix}[4]{\left( \begin{smallmatrix} #1 & #2 \\ #3 & #4 \end{smallmatrix} \right)}
\newcommand{\wt}{\widetilde}
\newcommand{\ol}{\overline}
\renewcommand{\binom}[2]{\genfrac{(}{)}{0pt}{}{#1}{#2}}
\newcommand{\spec}{{\text{sp}}}
\DeclareMathOperator{\chr}{char}
\DeclareMathOperator{\rank}{rank}
\DeclareMathOperator{\disc}{disc}
\renewcommand{\mod}[1]{\text{ }(\operatorname{mod}\text{ }#1)}
\newcommand{\Dvrig}[1]{\bD_{#1}}
\newcommand{\Dkrig}{\Dvrig{k}}
\newcommand{\Doc}{\D^\dag}
\newcommand{\Aoc}{\A^\dag}
\newcommand{\Phis}{\widetilde{\Phi}}
\newcommand{\nus}{\widetilde{\nu}}
\newcommand{\mus}{\widetilde{\mu}}
\newcommand{\ds}{\displaystyle}
\DeclareMathOperator{\Div}{Div} 
\DeclareMathOperator{\Hom}{Hom}
\DeclareMathOperator{\ord}{ord} 
\DeclareMathOperator{\GL}{GL} 
\DeclareMathOperator{\SL}{SL}
\DeclareMathOperator{\Symb}{Symb}
\DeclareMathOperator{\Sym}{Sym} 
\DeclareMathOperator{\im}{im}
\DeclareMathOperator{\Fil}{Fil}
\DeclareMathOperator{\nil}{nil}
\DeclareMathOperator{\cont}{cont}
\DeclareMathOperator{\Spec}{Spec}
\newcommand{\MS}[1]{\Symb_{\Gamma}(#1)}
\newcommand{\MSo}[1]{\Symb_{\Gamma_0}(#1)}
\newcommand{\tate}[1]{\langle \langle #1 \rangle \rangle}
\newcommand{\bAr}{R}
\newcommand{\bArc}{\Lambda}  
\newcommand{\Wr}{W_m}
\newcommand{\cWr}{\W_m}
\newcommand{\ot}{\hat{\otimes}}
\newcommand{\otz}{\hat{\otimes}}
\newcommand{\bAs}{\bA \ot \bAr}
\newcommand{\bAsbig}{\bA[r] \ot \bAr}
\newcommand{\bDs}{\bD \ot \bAr}
\newcommand{\bDsc}{\bD^0 \ot \bArc}
\newcommand{\bDsbig}{\bD[r] \ot \bAr}
\newcommand{\bDo}{\bD^0}
\newcommand{\bAo}{\bA^{\!0}}
\newcommand{\bAro}{\bAr^0}
\newcommand{\bDso}{\bDo \otz \bAro}
\newcommand{\Docs}{\D^\dag(\bAr)}
\newcommand{\MSkord}{X_k^{\ord}}
\newcommand{\MSord}{X^{\ord}}
\newcommand{\aut}{K_{a,c,m}}
\newcommand{\autv}[3]{K_{#1,#2,#3}}
\newcommand{\tLambda}{\wt{\Lambda}}
\newcommand{\Gr}{\text{Gr}}
\DeclareMathOperator{\ad}{ad}
\begin{document}

\title[Hida families via overconvergent modular symbols]{Explicit computations of Hida families via overconvergent modular symbols}
\subjclass[2010]{11F33, 11R23, 11F67, 11F85}
\keywords{Overconvergent modular symbols, Hida theory, $p$-adic $L$-functions, $L$-invariants, Hecke algebras}

\author[Dummit]{Evan P.\ Dummit}
\address[Dummit]{
Department of Mathematics\\
915 Hylan Building\\
University of Rochester\\
Rochester, NY 14627\\
USA
}
\email{edummit@ur.rochester.edu}

\author[Hablicsek]{M\'{a}rton Hablicsek}
\address[Hablicsek]{
Department of Mathematics\\
David Rittenhouse Lab\\
University of Pennsylvania\\
Philadelphia, PA 19104\\
USA
}
\email{mhabli@math.upenn.edu}

\author[Harron]{Robert Harron}
\address[Harron]{
Department of Mathematics\\
Keller Hall\\
University of Hawai`i at M\={a}noa\\
Honolulu, HI~96822\\
USA
}
\email{rharron@math.hawaii.edu}

\author[Jain]{Lalit Jain}
\address[Jain]{
Department of Mathematics\\
Van Vleck Hall\\
University of Wisconsin--Madison\\
Madison, WI 53706\\
USA
}
\email{jain@math.wisc.edu}

\author[Pollack]{Robert Pollack}
\address[Pollack]{
Department of Mathematics and Statistics\\
111 Cummington Mall\\
Boston University\\
Boston, MA 02215\\
USA
}
\email{rpollack@math.bu.edu}

\author[Ross]{Daniel Ross}
\address[Ross]{
Department of Mathematics\\
Van Vleck Hall\\
University of Wisconsin--Madison\\
Madison, WI 53706\\
USA
}
\email{ross@math.wisc.edu}

\thanks{Robert Harron was supported by NSA grant \#H98230-13-1-0223 during part of this project. Robert Pollack was supported by NSF grant DMS-1303302.}

\date{\today}

\begin{abstract}
In \cite{PS1}, efficient algorithms are given to compute with overconvergent modular symbols.  These algorithms then allow for the fast computation of  $p$-adic $L$-functions and have further been applied to compute rational points on elliptic curves ({\it e.g.}\ \cite{DarmonPollack,Mak}).  In this paper, we generalize these algorithms to the case of {\it families} of overconvergent modular symbols.  As a consequence, we can compute $p$-adic families of Hecke-eigenvalues, two-variable \mbox{$p$-adic} $L$-functions, $L$-invariants, as well as the shape and structure of ordinary Hida--Hecke algebras.
\end{abstract}

\maketitle

\tableofcontents

\section{Introduction}
In a seminal work from the mid-80s, Hida \cite{HidaGalRep,HidaIwasawaModules} introduced a theory of $p$-adic families of ordinary Hecke-eigenforms.    This work was generalized by Coleman \cite{Coleman} in the mid-90s to include non-ordinary forms which ultimately led to the Coleman--Mazur \cite{ColemanMazur} construction of the eigencurve---a rigid analytic space which parametrizes all finite slope Hecke-eigenforms.  Over the following decade, the theory of $p$-adic variation of automorphic forms blossomed with multiple constructions of eigenvarieties over a  wide class of reductive groups \cite{AshStevens,Emerton,UrbanEigenvarieties,AIS,AIP}.
Moreover, the consequences to number theory of the existence of $p$-adic families of automorphic forms have been profound with the proofs of the Mazur--Tate--Teitelbaum conjecture, the Main Conjecture (for class groups or modular forms!), and the Fontaine--Mazur conjecture (just to name a few) all heavily reliant upon the theory of $p$-adic variation.


With that said, our current state of understanding of the shape and structure of these eigenvarieties is still quite limited.  Taking the simplest example of ordinary forms on $\GL_2/\Q$ ({\it e.g}.\ the setting of Hida's original work), we do not have a good understanding of a single example of a Hecke--Hida algebra which is not simply a union of open discs.

This paper will attempt to rectify this situation at least in the case of classical Hida theory by introducing methods for computing with families of overconvergent modular symbols (which form the basis of Stevens'  construction of the eigencurve).  These methods generalize the constructions of \cite{PS1} where overconvergent modular symbols of a fixed weight were studied.  As a consequence, we can compute $q$-expansions of Hida families, two-variable $p$-adic $L$-functions, $L$-invariants of modular forms and their symmetric squares, and, moreover,  we can get our hands on the geometry of Hida families in several non-trivial situations.

As an example of some of the invariants we can compute, take $p=11$ and consider the 11-adic Hida family passing through Ramanujan's discriminant form $\Delta$.  In this case, the Hida family passing through $\Delta$ is parametrized by a single open disc.  Thus, for any prime $\ell$, the Hecke-eigenvalue of $T_\ell$ acting on the Hida family through Ramanujan's $\Delta$ is a power series in a weight variable $k$\footnote{Here, and throughout the paper, we are normalizing the weight variable $k$ to correspond to forms in $M_{k+2}(\Gamma)$.}  We compute for example
$$
a_{2}(k) = -2 - 41118k - 22748k^{2} + 37268k^{3} + 43923k^{4} + \OO(11^5,k^5).
$$
Note that we have that
 $$a_2(10) = a_2(\Delta) = -24$$ while $$a_2(0) = a_2(X_0(11)) = -2$$ as this Hida family specializes in weight 2 to the modular form associated with the elliptic curve $X_0(11)$.  
 The above approximation to $a_2(k)$ does indeed give these values modulo $11^5$.
 For other positive integer values of $k$, the above formula gives \mbox{$11$-adic} approximations to the coefficient of $q^2$ in the unique normalized $11$-ordinary form of weight $k$ and level 1. A higher precision approximation to $a_2(k)$ (and $a_\ell(k)$ for $\ell\leq11$) is given in Example~\ref{ex:X011_qexp}.

For another example, consider $p=3$ and fix a tame level $N=11$.  There are exactly two $3$-ordinary forms of weight 2 and level 33, and moreover, these forms are congruent modulo 3.  In particular, the Hida family attached to these forms cannot simply be the union of two open discs (because of the congruence between the two forms).  The possibilities for the geometry of this Hida family include two discs glued together at some collection of points or a double cover of weight space ramified at several points.  We note this example has already appeared in several places including \cite{GS2,PS1,Buzzardnote}.  Using the methods of this paper, we were able to determine that this Hida family is a double-cover of weight space ramified at a single (non-classical) weight, and moreover, this weight is congruent to $30060 \mod{3^{11}}$.

To discuss the methods of the paper, we introduce some notation.  Let $\bA$ denote the space of convergent power series on the closed unit disc, and let $\bD$ denote the space of distributions equal to the continuous $\Qp$-dual of $\bA$.  Let $\sigop$ denote the semigroup of matrices $\smallmat \in \text{M}_2(\Zp)$ with $a \in \Zpx$, $c \in p\Zp$, and with non-zero determinant.  For each weight $k$, one can endow $\bD$ with a weight $k$ action by $\sigop$, and we write $\bD_k$ for this space of distributions.  The space
$$\MSo{\bD_k}$$
is the collection of overconvergent modular symbols of level $\Gamma_0 = \Gamma_0(Np)$.  The systems of Hecke-eigenvalues occurring in this space are essentially the same as the systems which occur in  the space of finite slope overconvergent  modular forms of weight $k+2$ and level $\Gamma_0$ (see \cite[Theorem 7.1]{PS2}).  

As with overconvergent modular forms, these spaces can be $p$-adically interpolated over weight space.  To this end, set $D$ equal to a closed disc in weight space of radius $1/p$ about any tame character, and set  $R$ equal to the collection of convergent power series on $D$.  Then $\bDs$ sits inside of the space of $R$-valued distributions.  In particular, by evaluating at a weight $k$ in $D$, we obtain a specialization map $\bD \ot R \to \bD_k$, and we refer to elements of $\bDs$ as {\it families of distributions} on $D$.

Moreover, one can equip $\bDs$ with a $\sigop$-action which is simultaneously compatible with all of the specialization maps on $D$.\footnote{To work with other discs in weight space, one needs to replace $\bD$ with smaller spaces of distributions such as $\bD[r]$ with $r<1$.}  We thus interpret
$$
\MSo{\bDs}
$$
as the space of {\it families of overconvergent modular symbols} on $D$ of level $\Gamma_0$.   This space admits a Hecke-action, and we define the ordinary subspace $\MSo{\bDs}^{\ord}$ as the intersection of the images of all powers of $U_p$.  All of the information of $p$-adic Hida families of tame level $N$ is contained within this ordinary subspace (as Hida families extend to all of weight space).

In this paper, we introduce methods for explicitly computing approximations to elements of $\MSo{\bDs}$.  In particular, we are able to compute approximations to the characteristic polynomial of any Hecke operator acting on $\MSo{\bDs}^{\ord}$.  From these computations, one can then compute $q$-expansions of Hida families of eigenforms.  From this, one can compute $L$-invariants via the formulae of 
\cite{Hida-Linv, Harron-thesis,samit}.  Moreover, computing two-variable $p$-adic $L$-functions is immediate once one has a family of overconvergent eigensymbols in hand as in \cite{GS}.  Lastly, these computations also allow us to gain some fine control over the geometry of these Hida families in a wide variety of examples. We include several of the examples we computed below. In future work, we will implement non-trivial nebentypus (thus allowing for odd weights), coefficients in extensions of $\Zp$, and computations for the prime $p=2$. The primary bottleneck in speed is computing with $p$-adic polynomials in Sage. After speeding this up, our future work will also aim to compute examples more systematically.

The algorithms developed in this paper have been implemented in Sage \cite{sage} and continue to be developed on the SageMathCloud. Once sufficiently polished, the code will be submitted for inclusion into Sage.


\subsection{Outline}  In the following section, we introduce the relevant distribution spaces leading to the definition of the space of families of distributions, $\bDs$.  In the third section, we introduce methods of working in the space $\MSo{\bDs}$ including producing explicit elements in this space, forming a basis of $\MSo{\bDs}^{\ord}$, and computing characteristic power series of Hecke operators on this ordinary subspace.  In the fourth section, we explain how to carry out these computations in practice by giving a systematic method of approximating families of overconvergent modular symbols.  Lastly, in the fifth section, we close with several examples which we computed via these methods.

\subsection*{Acknowledgments}
We would like to thank the Southwest Center for Arithmetic Geometry for organizing the 2011 Arizona Winter School where the work on this article began as a student project. We would also like to thank the participants of Sage Days 44 for their work in porting the original Sage scripts into a full blown Sage package. We would like to thank Sage, as well as the SageMath Cloud, where we developed the algorithms and computed the examples in this article. We would like to thank Glenn Stevens for his support of this project and Frank Calegari for some very helpful conversations. Finally, our thanks go to the referee for some comments and suggestions that improved the clarity of this article.

\section{Distribution modules in families}

In this section, we introduce the relevant distribution spaces which will ultimately be the coefficients of our spaces of modular symbols.

\subsection{Distribution spaces}
\label{sec:dist}

\label{sec:dists}

Let $\bA$ denote the Tate algebra in a single variable $z$.  That is, $\bA= \Qp\langle z\rangle$, the collection  of power series with coefficients in $\Qp$ which converge on the unit disc of $\Cp$:
$$
\bA = \{ f(z) \in \Qp\llbracket z\rrbracket ~:~ f(z) = \sum_{n\geq0} a_n z^n \mbox{~and~} |a_n| \to 0 \text{~as~} n \to \infty \}.
$$
Note that $\bA$ is a Banach space under the norm
$$
|| f|| = \max_n |a_n|
$$
where $f(z) = \sum_n a_n z^n$.  We then define our space of distributions $\bD$ by
$$
\bD = \Hom_{\cont}(\bA,\Qp).
$$
Note that $\bD$ is a Banach space under the operator norm
$$
||\mu|| = \sup_{\genfrac{}{}{0pt}{}{f \in \bA}{f \neq 0}}
\frac{|\mu(f)|}{||f||}.
$$
An element $\mu \in \bD$ is uniquely determined by its values on all monomials $z^j$ since the latter have dense span in $\bD$.  We will refer to the sequence $\{\mu(z^j)\}_{j=0}^\infty$ as the {\it moments} of $\mu$.  We have that
$$
\mu \in \bD \text{~if~and~only~if~}  \{|\mu(z^j)|\} \text{~is~a~bounded~sequence~in~} \Qp.
$$
Indeed, for each $f(z) = \sum_j c_j z^j \in \bA$, we need that $\sum_j c_j \mu(z^j)$ converges.  But since $|c_j| \to 0$, this only forces $\mu(z^j)$ to be bounded (and any bounded sequence defines a distribution).

We will write $\bDo$ (resp.\ $\bAo$) for the unit ball of $\bD$ (resp.\ $\bA$).  Note that $\mu \in \bDo$ if and only if $\mu(z^j) \in \Zp$ for all $j \geq 0$.

The space $\bD$ is our basic distribution space which we will ultimately study in families.  But we will need to make use of some slightly fancier distribution spaces which we introduce now.

For $r \geq 1$, let $\bA[r]$ denote the collection of power series over $\Qp$ which converge on the disc in $\Cp$ of radius $r$ around 0, i.e.\
\[ \bA[r]=\left\{\sum_{n\geq0}a_nz^n\in\Qp\llbracket z\rrbracket:|a_n|r^n\rightarrow0\text{ as }n\rightarrow\infty\right\}.
\]
Then $\bA[r]$ is a Banach space under the sup norm, and we define $\bD[r] = \Hom_{\cont}(\bA[r],\Qp)$ as the dual Banach space.

Thus, $\bD[1]$ is nothing other than $\bD$ introduced above.  However, if $r>1$, then $\bD[r]$ is a larger space than $\bD$ and contains distributions whose moments are not bounded.  Indeed, for $f = \sum_j c_j z^j \in \bA[r]$ to converge on the disc of radius $r$, the sequence $\{c _j \}$ must converge rapidly to 0, thus allowing the sequence $\{ \mu(z^j) \}$ to have some non-trivial growth.  Explicitly,
$$
\mu \in \bD[r] \text{~if~and~only~if~}  \{|\mu(z^j)|\} \text{~is~} O(r^j) \text{~as~} j \to \infty.
$$

Finally, we set $\displaystyle \Aoc = \varinjlim_{r\rightarrow1^+}  \bA[r]$, {\it i.e.}\ the collection of power series which converge on {\it some} disc of radius strictly greater than 1.  This space is endowed with the inductive limit topology. We define $\Doc = \Hom_{\cont}(\Aoc,\Qp)$.   Equivalently, $\displaystyle \Doc = \varprojlim_{r\rightarrow1^+} \bD[r]$; or more simply, $\Doc$ is the intersection over all $r>1$ of $\bD[r]$.  Thus,
$$
\mu \in \Doc \text{~if~and~only~if~} \{\mu(z^j)\} \text{~is~} O(r^j) \text{~as~} j \to \infty \text{~for~all~} r>1.
$$

\subsection{Weight space}

For the remainder of the paper, let $p$ denote an odd prime.  Let $\W = \Hom(\Zpx,\Cpx)$ denote the collection of continuous
characters from $\Zpx$ to $\Cpx$. We will refer to this as weight
space.  There is an injective map from $\Z \to \W$ sending $k$ to the
``raising to the $k$-th power" character.

Since $\Zpx \cong (\Z/p\Z)^\times \times(1+p\Zp)$, a character in $\W$ is
uniquely determined by its restriction to $(\Z/p\Z)^\times$ and by its value on
a topological generator $\gamma$ of $1+p\Zp$.  Moreover, if $\kappa
\in \W$, then $|\kappa(\gamma) - 1| < 1$.

Let $D(0,1)$ be the open unit disc of $\Cp$ about 0.  The map
\begin{align*}
\Hom(1+p\Zp,\Cpx) &\to D(0,1) \\
\kappa &\mapsto \kappa(\gamma)-1
\end{align*}
is a bijection.  In particular, $\W$ can be identified with $p-1$
copies of the open unit disc.  Let $\omega : (\Z/p\Z)^\times \to \Zp^\times$ denote the Teichm\"{u}ller character and, for $0\leq m\leq p-2$, let $\cWr$ denote the subspace of $\W$ consisting of characters whose restriction to $(\Z/p\Z)^\times$ equals $\omega^m$.

\subsection{The weight $\kappa$ action}

Let $\sigop \subseteq \text{M}_2(\Zp)$ denote the semigroup of matrices $\smallmat$ of non-zero determinant with $a \in \Zpx$ and $c \in p\Zp$. For each $\kappa\in\W$, we wish to define a ``weight $\kappa$ action'' of $\sigop$ on the above spaces of power series and distributions. This will allow us to eventually define Hecke actions on spaces of overconvergent modular symbols. 

First, for $k$ an integer, we can define the weight $k$ action of $\sigop$ on the spaces defined in section \ref{sec:dists} as follows. For $f$ in $\bA[r]$ with $r<p$ and $\gamma \in \sigop$, we define
$$
(\gamma \cdot_k f)(z) = (a+cz)^k f \left( \frac{b+dz}{a+cz} \right)
$$
which endows $\bA[r]$ with a left $\sigop$-action.  Dually, for $\mu \in \bD[r]$, we define
$$
(\mu |_k \gamma)(f) = \mu(\gamma \cdot_k f)
$$
which endows $\bD[r]$ with a right $\sigop$-action.  Furthermore, this endows $\Doc$ (resp.\ $\Aoc$) with a right (resp.\ left) $\sigop$-action.  

Now we consider the case of $p$-adic weights. Let $\Wr$ denote the subspace of characters $\kappa$ in $\W_m$ that satisfy $|\kappa(\gamma) - 1| \leq 1/p$ for some (and hence every)
topological generator $\gamma$ of $1+p\Zp$. Note that the classical weights---the ``raising to the $k$-th power'' characters, for $k\in\Z$---are all in $\Wr$, for some $m$. We can (and will) identify
$\Wr$ with the closed disc of radius $1/p$ around 0.  For $\kappa \in \Wr$, we will define weight $\kappa$ actions on our spaces of distributions.   The key to doing this is to make sense of $\kappa(a+cz)$ as a power series in $z$ (see Definition \ref{def:Fkappa} and Lemma \ref{lemma:rigan} below).

We begin with some lemmas.

\begin{lemma}
\label{lemma:logiso}
If $\ord_p(x) > \frac{1}{p-1}$, then $|\log(1+x) | = |x|$.
\end{lemma}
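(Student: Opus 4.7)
The plan is to use the Taylor series
$$\log(1+x) = \sum_{n=1}^{\infty} (-1)^{n-1} \frac{x^n}{n} = x - \frac{x^2}{2} + \frac{x^3}{3} - \cdots$$
and show that under the hypothesis $\ord_p(x) > \frac{1}{p-1}$, the leading term $x$ strictly dominates all subsequent terms $p$-adically. Since the $p$-adic valuation of a sum is the valuation of the strictly smallest term when such a term exists, this will immediately yield $|\log(1+x)| = |x|$.

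The bulk of the work is therefore to verify that for every $n \geq 2$,
$$\ord_p\!\left(\frac{x^n}{n}\right) = n\,\ord_p(x) - \ord_p(n) > \ord_p(x),$$
which is equivalent to the strict inequality $(n-1)\,\ord_p(x) > \ord_p(n)$. First I would use the hypothesis to replace $\ord_p(x)$ by $\frac{1}{p-1}$, reducing the task to showing that $\frac{n-1}{p-1} \geq \ord_p(n)$ for all $n \geq 2$. This is a standard $p$-adic estimate: if $\ord_p(n) = a$ then $p^a \mid n$, so $n \geq p^a$, giving $a \leq \log_p(n)$; and the elementary inequality $p^a \leq 1 + a(p-1)$ (which follows by induction on $a$, or equivalently from the binomial expansion of $p^a = (1 + (p-1))^a$) then yields $a \leq \frac{p^a - 1}{p-1} \leq \frac{n-1}{p-1}$.

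Putting these pieces together: $(n-1)\,\ord_p(x) > \frac{n-1}{p-1} \geq \ord_p(n)$, so each term $x^n/n$ for $n \geq 2$ has strictly larger valuation than $x$. Consequently the series $\log(1+x)$ converges (its general term tends to $0$) and its valuation equals that of its smallest-valuation term, namely $\ord_p(x)$. Taking absolute values gives $|\log(1+x)| = |x|$, as required.

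The only genuinely nontrivial point is the elementary bound $\ord_p(n) \leq \frac{n-1}{p-1}$; everything else is formal manipulation of the logarithm series and the ultrametric inequality. I expect no real obstacle, since this is the $p$-adic analogue of the classical fact that $\log(1+x) = x + O(x^2)$ near the origin, adapted to the non-archimedean setting.
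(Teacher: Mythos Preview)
Your approach is correct and is exactly what the paper does (in a single sentence): show that the first term of the logarithm series strictly dominates all later terms under the hypothesis on $\ord_p(x)$. One small slip to fix: the inequality you state as ``$p^a \leq 1 + a(p-1)$'' is written backwards---the binomial expansion gives $p^a = (1+(p-1))^a \geq 1 + a(p-1)$, and it is \emph{this} direction that yields your (correct) conclusion $a \leq \frac{p^a-1}{p-1} \leq \frac{n-1}{p-1}$.
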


\begin{proof}
The condition that $\ord_p(x) > \frac{1}{p-1}$ forces the first term to dominate in the power series expansion of $\log(1+x)$.
\end{proof}

\begin{lemma}
\label{lemma:factorial}
For $n \geq 1$, 
$$
\ord_p \left( p^n / n! \right) \geq n \cdot \left(1 - \frac{1}{p-1} \right).
$$
\end{lemma}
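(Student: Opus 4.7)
The plan is to invoke Legendre's formula for the $p$-adic valuation of a factorial, which states
\[
\ord_p(n!) = \frac{n - s_p(n)}{p-1},
\]
where $s_p(n)$ denotes the sum of the base-$p$ digits of $n$. This is a classical identity that I would either cite or derive in a single line from $\ord_p(n!) = \sum_{i \geq 1} \lfloor n/p^i \rfloor$ together with the base-$p$ expansion of $n$.

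Granted the formula, the computation is immediate. First I would write
\[
\ord_p\!\left(\frac{p^n}{n!}\right) \;=\; n - \ord_p(n!) \;=\; n - \frac{n - s_p(n)}{p-1} \;=\; \frac{n(p-2) + s_p(n)}{p-1}.
\]
Then, since $s_p(n) \geq 0$ (in fact $s_p(n) \geq 1$ when $n \geq 1$), I would drop the $s_p(n)/(p-1)$ term to obtain the lower bound
\[
\ord_p\!\left(\frac{p^n}{n!}\right) \;\geq\; \frac{n(p-2)}{p-1} \;=\; n\left(1 - \frac{1}{p-1}\right),
\]
which is exactly the stated inequality.

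There is no real obstacle here; the only content is Legendre's formula, and the rest is elementary algebra. If one preferred to avoid citing the closed form, an equivalent route would be to estimate $\ord_p(n!) = \sum_{i \geq 1} \lfloor n/p^i \rfloor \leq \sum_{i \geq 1} n/p^i = n/(p-1)$ directly, which already gives $\ord_p(p^n/n!) \geq n - n/(p-1) = n(1 - 1/(p-1))$ without any digit-sum bookkeeping. I would likely present this shorter version in the proof, since it avoids introducing $s_p(n)$ altogether.
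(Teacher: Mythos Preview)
Your proposal is correct, and the shorter version you describe at the end --- bounding $\ord_p(n!) = \sum_{i\geq 1} \lfloor n/p^i\rfloor \leq \sum_{i\geq 1} n/p^i = n/(p-1)$ directly --- is exactly the proof given in the paper. The Legendre-formula route via $s_p(n)$ is a fine alternative but unnecessary here, as you yourself note.
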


\begin{proof}
We have
$$
\ord_p(p^n/n!) \geq n - \left( n/p + n/p^2 + \dots \right) = n - n/(p-1).
$$
\end{proof}

Now for $\kappa \in W_0$, define
\begin{equation}
F_\kappa(z) := \sum_{n=0}^\infty \binom{\frac{\log(1+z)}{\log \gamma}}{n} (\kappa(\gamma)-1)^n
\end{equation}
which the following lemma shows is a power series expansion for the character $\kappa$.

\begin{lemma}
\label{lemma:Fk}
Fix $\kappa \in W_0$.

\begin{enumerate}
\item  $F_\kappa(x)$ converges for $x$ such that $\ord_p(x) > \frac{1}{p-1}$, i.e.\ $F_\kappa(z) \in \bA[r]$ for any $r < p^{-1/(p-1)}$.
\item For $x$ with $\ord_p(x) > \frac{1}{p-1}$, we have $|F_\kappa(x)| \leq 1$.
\item For $x \in 1+p\Zp$, 
$$
F_\kappa(x-1) = \kappa(x).
$$
\end{enumerate}
\end{lemma}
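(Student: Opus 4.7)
The plan is to handle parts (1) and (2) simultaneously via a single valuation estimate on the general term of the series defining $F_\kappa$, and then to establish (3) by checking the identity on the dense subgroup $\{\gamma^a : a \in \Z\} \subset 1+p\Zp$ and invoking continuity.

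For parts (1) and (2), fix $x \in \Cp$ with $\ord_p(x) > 1/(p-1)$ and set $y = \log(1+x)/\log\gamma$ and $s = \kappa(\gamma) - 1$. Writing $\gamma = 1+pu$ with $u \in \Zpx$ and applying Lemma~\ref{lemma:logiso} twice yields $\ord_p(\log(1+x)) = \ord_p(x)$ and $\ord_p(\log\gamma) = 1$, so $\ord_p(y) > 1/(p-1) - 1 = -(p-2)/(p-1)$. Since $\kappa \in W_0$ we have $\ord_p(s) \geq 1$. The key observation is that $\ord_p(y) < 0$, so by the ultrametric inequality $\ord_p(y-i) = \ord_p(y)$ for every integer $i \geq 0$. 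Combining this with Lemma~\ref{lemma:factorial} (which gives $\ord_p(n!) \leq n/(p-1)$) yields
$$
\ord_p\left(\binom{y}{n} s^n\right) = \sum_{i=0}^{n-1}\ord_p(y-i) - \ord_p(n!) + n\cdot\ord_p(s) \geq n\left(\ord_p(y) + \tfrac{p-2}{p-1}\right),
$$
which is strictly positive and grows linearly in $n$. This immediately gives pointwise convergence of $F_\kappa(x)$ together with the bound $|F_\kappa(x)| \leq 1$, since every term has nonnegative valuation and the $n=0$ term is exactly $1$. For the uniform statement $F_\kappa(z) \in \bA[r]$ with $r < p^{-1/(p-1)}$, I would rerun the estimate with $\ord_p(x)$ replaced by the uniform lower bound $-\log_p r$; the partial sums lie in $\bA[r]$, and the bound now shows their uniform convergence on the closed disc of radius $r$, placing the limit $F_\kappa$ in $\bA[r]$.

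For part (3), when $x = \gamma^a$ with $a \in \Z$ one has $\log x/\log\gamma = a$, so the series defining $F_\kappa(\gamma^a-1)$ collapses to $\sum_{n\geq 0}\binom{a}{n}s^n$. For $a \geq 0$ this is a finite sum equal to $(1+s)^a = \kappa(\gamma)^a = \kappa(\gamma^a)$ by the ordinary binomial theorem; for $a<0$ the infinite series takes the same value by the $p$-adic binomial theorem (applicable since $|s| \leq 1/p < 1$ with $p$ odd). Since $\gamma$ is a topological generator of $1+p\Zp$, the set $\{\gamma^a\}_{a\in\Z}$ is dense, and both $\kappa$ and $x \mapsto F_\kappa(x-1)$ are continuous on $1+p\Zp$; the equality therefore extends from the dense subset to all of $1+p\Zp$.

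I do not anticipate any deep obstacle; the substance is the valuation bookkeeping in parts (1) and (2). The subtlety to watch is that $y$ need not lie in $\Zp$ and indeed $\ord_p(y)$ can be negative, so one cannot simply invoke the integrality of binomial coefficients of integers; the pointwise ultrametric argument above is what shows that the negative valuation of $y$ is compensated by the contribution from $s^n$ after accounting for the $n!$ in the denominator of $\binom{y}{n}$.
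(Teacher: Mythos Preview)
Your approach is essentially the paper's, and the core valuation estimate is the same. There is, however, a gap: you assert as a ``key observation'' that $\ord_p(y) < 0$, but this holds only when $\ord_p(x) < 1$; for $x$ deeper in the disc (say $x \in p^2\Zp$) one has $\ord_p(y) \geq 0$, and then the identity $\ord_p(y-i) = \ord_p(y)$ on which your computation of $\ord_p\binom{y}{n}$ rests fails. The paper handles this by first restricting to the annulus $1/(p-1) < \ord_p(x) < 1$ (where your argument goes through verbatim), and then observing that convergence of a power series on that annulus forces convergence on the whole open disc of radius $p^{-1/(p-1)}$; for part (2) it invokes the Maximum Modulus Principle to propagate the bound $|F_\kappa(x)| \leq 1$ from the annulus inward. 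An equally easy patch on your side: when $\ord_p(y) \geq 0$ one has $\ord_p(y-i) \geq 0$ for all $i$, hence $\ord_p\binom{y}{n} \geq -\ord_p(n!) \geq -n/(p-1)$ and $\ord_p\binom{y}{n}s^n \geq n(p-2)/(p-1) > 0$, which is even stronger.

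For part (3), the paper writes $x = \gamma^a$ with $a \in \Zp$ directly (using that $\gamma$ is a topological generator) and applies the $p$-adic binomial identity $\sum_n \binom{a}{n}s^n = (1+s)^a$ in one stroke; your route via $a \in \Z$ plus density and continuity is equivalent.
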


\begin{proof}
For the first part, since $\kappa \in W_0$, we have $|\kappa(\gamma)-1| \leq 1/p$.  Furthermore,  if $\ord_p(x) > \frac{1}{p-1}$, then by Lemma \ref{lemma:logiso}, we have $\ord_p( \log(1+x) ) = \ord_p(x)$.  If $$L := \frac{\log(1+x)}{\log \gamma},$$ then $\ord_p(L) = \ord_p(x) - 1$.

Let's further assume that $\ord_p(x) < 1$, so that $\ord_p(L) < 0$.
Then,
\begin{align*}
\ord_p  \binom{L}{n} 
&= \ord_p \left( \frac{L(L-1) \dots (L-n+1)}{n!}  \right) \\
&= \ord_p (L(L-1) \dots (L-n+1)) - \ord_p(n!)   \\
&= n \ord_p (L) - \ord_p(n!)   \\
&= n (\ord_p (x)-1) - \ord_p(n!)   \\
&\geq n (\ord_p (x)-1) - \frac{n}{p-1}  \\
&\geq n \left(\ord_p (x)-1-\frac{1}{p-1}\right),
\end{align*}
and thus
$$
\ord_p  \binom{L}{n} (\kappa(\gamma) - 1)^n \geq 
 n \left(\ord_p (x)-\frac{1}{p-1}\right).
$$
Since this term goes to infinity as $n \to \infty$, we have that $F_\kappa(x)$ converges.  Furthermore, since this is true for any $x$ with $1 > \ord_p(x) > \frac{1}{p-1}$, we must have that $F_\kappa(x)$ converges for all $x$ with $\ord_p(x) > \frac{1}{p-1}$.

For the second part, note that every term in the power series which defines $F_\kappa(x)$ has valuation at least 0 for $x$ with $1 > \ord_p(x) > \frac{1}{p-1}$.  Thus, $|F_\kappa(x)| \leq 1$ for all $x$ with $\ord_p(x)> \frac{1}{p-1}$ (by the Maximum Modulus Principle applied to any closed disc of radius between $1/p$ and $p^{-1/(p-1)}$, see \cite[Proposition 3 of \S5.1.4]{BGR}).

For the third part, write $x = \gamma^a$. Then, we have
$$
F_\kappa(x-1) 
= \sum_{n=0}^\infty \binom{\frac{\log(\gamma^a)}{\log \gamma}}{n} (\kappa(\gamma)-1)^n 
= \sum_{n=0}^\infty \binom{a}{n} (\kappa(\gamma)-1)^n = \kappa(\gamma)^a = \kappa(x).
$$
\end{proof}

\begin{definition}\label{def:Fkappa}
Fix $\kappa \in \Wr$ and write $\kappa = \omega^m \cdot \kappa_0$ with $\kappa_0 \in W_0$.  Let $a \in \Zpx$ and $c \in p \Zp$.  Define
$$
F_{\kappa,a,c}(z) := \omega(a)^m \cdot F_{\kappa_0}\left( \frac{a+cz}{\omega(a)} -1 \right).
$$
\end{definition}

\begin{lemma}
\label{lemma:rigan}
For $\kappa \in \Wr$, $a \in \Zpx$, and $c \in p\Zp$,  we have
\begin{enumerate}
\item  $F_{\kappa,a,c}(x)$ converges for $x$ such that $\ord_p(x) > \frac{1}{p-1} - 1$, i.e.\ $F_{\kappa,a,c}(z)$ is in $\bA[p^h]$ for any $h< c_p  := 1 - \frac{1}{p-1}=\frac{p-2}{p-1}$,
\item $|F_{\kappa,a,c}(x)| \leq 1$ for $x$ with $\ord_p(x) > \frac{1}{p-1} - 1$, 
\item for $x \in \Zp$, 
$$
F_{\kappa,a,c}(x) = \kappa(a+cx).
$$
\end{enumerate}
\end{lemma}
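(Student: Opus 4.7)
The plan is to reduce each of the three claims to the corresponding part of Lemma \ref{lemma:Fk} applied to $F_{\kappa_0}$, via the substitution
\[
w \;=\; g(z) \;:=\; \frac{a+cz}{\omega(a)} - 1 \;=\; \frac{(a - \omega(a)) + c z}{\omega(a)}.
\]
The crucial arithmetic input, used repeatedly, is that $a - \omega(a) \in p\Zp$ (a defining property of the Teichm\"uller lift) and $c \in p\Zp$ by the definition of $\sigop$, while $\omega(a) \in \Zpx$. Once these valuations are in hand, everything follows essentially formally from Lemma \ref{lemma:Fk}.

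For part (1), I would estimate $\ord_p(g(z))$ directly. Since $\ord_p(a - \omega(a)) \geq 1$ and $\ord_p(c) \geq 1$, for any $z$ with $\ord_p(z) > \frac{1}{p-1} - 1$ one gets
\[
\ord_p(g(z)) \;\geq\; \min\bigl(1,\, 1 + \ord_p(z)\bigr) \;>\; \tfrac{1}{p-1},
\]
using that $p$ is odd, so $1 > \frac{1}{p-1}$. Hence $g(z)$ lies in the convergence region for $F_{\kappa_0}$ given by Lemma \ref{lemma:Fk}(1), and the composition $F_{\kappa,a,c}(z) = \omega(a)^m \cdot F_{\kappa_0}(g(z))$ converges; translating the bound on $\ord_p(z)$ into a radius bound yields $F_{\kappa,a,c}(z) \in \bA[p^h]$ for every $h < c_p$. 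Part (2) is then immediate: $|\omega(a)^m| = 1$ and $|F_{\kappa_0}(g(z))| \leq 1$ on the same region by Lemma \ref{lemma:Fk}(2).

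For part (3), given $x \in \Zp$, note that $a + cx \equiv a \equiv \omega(a) \pmod{p}$ because $cx \in p\Zp$, so $\frac{a+cx}{\omega(a)} \in 1 + p\Zp$. Lemma \ref{lemma:Fk}(3) then gives
\[
F_{\kappa_0}(g(x)) \;=\; \kappa_0\!\left(\tfrac{a+cx}{\omega(a)}\right).
\]
Since $\kappa_0 \in W_0$ is trivial on $(\Z/p\Z)^\times$ and hence on Teichm\"uller lifts, $\kappa_0(\omega(a)) = 1$, giving $\kappa_0(a+cx) = \kappa_0\!\bigl(\tfrac{a+cx}{\omega(a)}\bigr)$. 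Combined with $\omega(a+cx) = \omega(a)$, I conclude
\[
F_{\kappa,a,c}(x) \;=\; \omega(a)^m \cdot \kappa_0(a+cx) \;=\; \omega(a+cx)^m \cdot \kappa_0(a+cx) \;=\; \kappa(a+cx).
\]

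No step is genuinely difficult; the only place requiring any care is the valuation bookkeeping in part (1), and in particular checking that the improvement by one factor of $p$ afforded by the hypotheses $a \equiv \omega(a)$ and $p \mid c$ is exactly what shifts the region of convergence from $\ord_p > \frac{1}{p-1}$ (in Lemma \ref{lemma:Fk}) to $\ord_p > \frac{1}{p-1} - 1$ here.
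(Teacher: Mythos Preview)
Your proof is correct and follows essentially the same approach as the paper's: both reduce all three parts to Lemma~\ref{lemma:Fk} via the substitution $z \mapsto \frac{a+cz}{\omega(a)} - 1$, using that $a - \omega(a) \in p\Zp$ and $c \in p\Zp$ to gain the extra factor of $p$ in the valuation. You have simply supplied more of the intermediate bookkeeping (the explicit $\min(1,\,1+\ord_p(z))$ bound and the justification that $\kappa_0$ kills Teichm\"uller lifts) that the paper leaves implicit.
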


\begin{proof}
The first and second parts follow from the previous lemma since ${\ord_p \left( \frac{a+cx}{\omega(a)}-1 \right) > \frac{1}{p-1}}$.  
  The third part also follows from the previous lemma.  Indeed, as
  \[
  	\frac{a+cx}{\omega(a)} \equiv 1\mod{p},
  \]
  we have
$$
F_{\kappa_0}\left( \frac{a+cx}{\omega(a)} -1 \right) 
= \kappa_0\left( \frac{a+cx}{\omega(a)}  \right)
= \kappa_0\left( \frac{a+cx}{\omega(a+cx)}  \right)
= \kappa_0\left( a+cx  \right).
$$
Thus, 
$$
F_{\kappa,a,c}(x) = \omega(a)^m \cdot \kappa_0\left( a+cx  \right) = \kappa(a+cx).
$$
\end{proof}

We can now define the weight $\kappa$ action for $\kappa \in \Wr$ just as before.  Indeed, for $f$ in $\bA[r]$ (with $1\leq r < r_p := p^{c_p}$) and $\gamma \in \sigop$, we define
$$
(\gamma \cdot_\kappa f)(z) = F_{\kappa,a,c}(z)\cdot f\!\left( \frac{b+dz}{a+cz} \right)
$$
which by Lemma \ref{lemma:rigan} is again in $\bA[r]$.  Thus, we have endowed $\bA[r]$  with a left $\sigop$-action.  Further, for $\mu \in \bD[r]$, we define
$$
(\mu |_k \gamma)(f) = \mu(\gamma \cdot_\kappa f)
$$
which endows $\bD[r]$ with a right $\sigop$-action.  Again, this automatically endows $\Doc$ (resp.\ $\Aoc$) with a right (resp.\ left) $\sigop$-action.

\subsection{Power series in families over weight space}

Let $\bAr := \bA(\Wr)$ denote the space of convergent power series on the
closed disc $\Wr$, say in a variable $W$. We then have 
$$
\bAr = \left\{ \sum_n a_n W^n ~\bigg|~ a_n \in \Qp \text{~and~} |
p^n a_n | \to 0 \right\}.
$$
If we set $\ds w : = \frac{W}{p}$, then $\bAr$ is simply the Tate algebra $\Qp \tate{w}$ in the variable $w$.   
Set $\bAro$ equal to the unit ball of $\bAr$ under the sup norm which is simply the integral Tate algebra $\Zp \tate{w}$.

Consider the space $\bAs$.  We can
think of elements of this space as families of elements of $\bA$
over $\Wr$.  Indeed for each $\kappa \in \Wr$ with values in $\Qp$, we
have a map
$$
\kappa: \bAs \to \bA
$$ given by evaluating elements of $\bAr$ at $\kappa$.  Thus, for
a fixed element of $F$ of $\bAs$, we get a
family of elements $\kappa(F) \in \bA$ for each $\kappa \in \Wr$
having values in $\Qp$.

More explicitly, we have
$$
\bAs \cong \Qp\tate{z} \ot \Qp\tate{w} \cong \Qp\tate{z,w},
$$
and thus elements of $\bAs$ are formal power series in $z$ and $w$ which converge for all $|z| \leq 1$, $|w| \leq 1$.   Evaluating at $\kappa$ simply means evaluating $w$ at $(\kappa(\gamma)-1)/p$. Thus, as we $p$-adically vary $\kappa$ over $\Wr$, we get a $p$-adic family of elements of $\bA$.

We now seek to give $\bAs$ the structure
of a $\sigop$-module in such a way that the above map (``evaluation at
$\kappa$") is equivariant with respect to this action on the source
and the weight $\kappa$ action on the target. We do this by constructing a two-variable power series that interpolates $F_{\kappa,a,c}(z)$ as $\kappa$ varies.

For $a \in \Zpx$ and $c \in p\Zp$, define
\begin{equation}\label{eqn:Kacm_defn}
\aut(z,w) 
= \omega(a)^m \cdot 
\sum_{n=0}^\infty \binom{\log_\gamma\!\left(\frac{a+cz}{\omega(a)}\right)}{n} (pw)^n
= \omega(a)^m \cdot (1+pw)^{\log_\gamma\left(\frac{a+cz}{\omega(a)}\right)}
\end{equation}
where $\log_\gamma(z) := \log z /\log \gamma$.

\begin{lemma}
\label{lemma:Kconverge}
For $a \in \Zpx$ and $c \in p\Zp$, we have 
\begin{enumerate}
\item $\aut(z,w)$ converges for $z$ and $w$ such that $|z| < p^{c_p}$ and $|w| \leq 1$.  That is, $\aut(z,w) \in \bA[p^h] \ot \bAr$ for $h < c_p$,
\item  $|\aut(z,w)| \leq 1$ for all such $z,w$,
\item for $\kappa \in \Wr$, we have
$$
\kappa(\aut(z,w)) = F_{\kappa,a,c}(z).
$$
\end{enumerate}
\end{lemma}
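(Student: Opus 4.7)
The plan is to treat $\aut(z,w)$ as a two-variable analogue of $F_\kappa(z)$ from Lemma~\ref{lemma:Fk}, with the variable $w$ playing the role of $(\kappa_0(\gamma)-1)/p$. Part (3) should then reduce to a direct comparison with Definition~\ref{def:Fkappa}, while parts (1) and (2) will follow by running the same valuation estimates as in Lemma~\ref{lemma:Fk}, with the factor $(pw)^n$ replacing $(\kappa(\gamma)-1)^n$.

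First I would dispose of (3). For $\kappa = \omega^m\kappa_0 \in \Wr$, the character $\omega^m$ is trivial on $1+p\Zp$, so $\kappa(\gamma) = \kappa_0(\gamma)$, and evaluation at $\kappa$ sends $w \mapsto (\kappa_0(\gamma)-1)/p$ and hence $pw \mapsto \kappa_0(\gamma) - 1$. Plugging this into the series form of $\aut(z,w)$ immediately recovers the defining expansion of $F_{\kappa,a,c}(z)$ in Definition~\ref{def:Fkappa}; this is pure bookkeeping.

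For (1) and (2), set $L := \log_\gamma\!\left(\frac{a+cz}{\omega(a)}\right)$. Since $a \equiv \omega(a) \pmod{p}$ and $\ord_p(cz) \geq 1 + \ord_p(z)$, the hypothesis $|z| \leq p^h$ with $h < c_p$ yields
\[
\ord_p\!\left(\frac{a+cz}{\omega(a)} - 1\right) > \tfrac{1}{p-1},
\]
so Lemma~\ref{lemma:logiso} (together with $\ord_p(\log\gamma) = 1$) gives $\ord_p(L) > -c_p$. Expanding $\binom{L}{n}$ as a product and invoking Lemma~\ref{lemma:factorial} in the same way as in the proof of Lemma~\ref{lemma:Fk}, while using $\ord_p(pw) \geq 1$ for $|w| \leq 1$, I expect an estimate of the shape
\[
\ord_p\!\left(\binom{L}{n}(pw)^n\right) \geq n\bigl(\ord_p(L) + c_p + \ord_p(w)\bigr) \geq 0,
\]
with the left side tending uniformly to $+\infty$ because $h < c_p$ is strict. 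This simultaneously gives convergence on the closed polydisc, so $\aut(z,w) \in \bA[p^h] \ot \bAr$, and the bound $|\aut(z,w)| \leq 1$.

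I do not anticipate any substantive obstacle: the argument is entirely parallel to Lemma~\ref{lemma:Fk}, with $(pw)^n$ contributing an extra $+n$ to each term's valuation via the factor $p^n$, which is exactly what makes the negative valuation of $L$ harmless. The only care required is in tracking the strict inequality $h < c_p$ so that the convergence is uniform on the closed disc (as opposed to only on its interior), after which the Maximum Modulus Principle argument from Lemma~\ref{lemma:Fk} applies verbatim to upgrade pointwise bounds to the sup-norm bound on the polydisc.
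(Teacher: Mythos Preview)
Your proposal is correct and follows essentially the same reasoning as the paper. The only difference is presentational: the paper first proves part (3) and then observes that parts (1) and (2) follow immediately from (3) together with Lemma~\ref{lemma:rigan} (since specializing $w$ to $(\kappa(\gamma)-1)/p$ recovers $F_{\kappa,a,c}$, whose convergence and bound are already established there), whereas you redo the underlying valuation estimate from Lemma~\ref{lemma:Fk} directly with $(pw)^n$ in place of $(\kappa(\gamma)-1)^n$.
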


\begin{proof}
The third part follows immediately from the definitions as
$$
\kappa(\aut(z,w)) = \aut(z,w)|_{w = (\kappa(\gamma) -1)/p}  = F_{\kappa,a,c}(z).
$$
But then the first and second parts follow from this equality and from Lemma \ref{lemma:rigan}.

\end{proof}

With this lemma in hand, we can thus define a $\sigop$-action on
$\bAsbig$ for $r < p^{c_p}$.  For $f \in \bA[r]$, set
\begin{equation}
\label{eqn:act}
\gamma \cdot (f(z) \otimes 1) = \aut(z,w) \cdot f \left(
\frac{b+dz}{a+cz} \right),
\end{equation}
and extend this action $\bAr$-linearly to all of $\bAsbig$. 

\begin{lemma}
For $\kappa \in \Wr$ and $r<p^{c_p}$, we have
$$
\kappa : \bAsbig \to \bA[r]
$$
is $\sigop$-equivariant where the source is endowed with the action in \eqref{eqn:act} and the target is endowed with the weight $\kappa$ action.
\end{lemma}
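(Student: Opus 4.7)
The plan is to reduce the statement to a direct computation by exploiting the $\bAr$-linear definition of the action on the source together with the fact that $\kappa$ is a continuous $\bA[r]$-algebra homomorphism from $\bA[r]\ot\bAr$ to $\bA[r]$ (namely, evaluation at $w = (\kappa(\gamma)-1)/p$). By continuity and $\bAr$-linearity of both the action on $\bAsbig$ and the map $\kappa$, it suffices to check $\sigop$-equivariance on pure tensors of the form $f(z)\otimes 1$ with $f\in\bA[r]$; the general case then follows by $\bAr$-linearity since $\kappa$ sends $\bAr$-scalars to their values in $\Qp$, which are themselves fixed by the weight $\kappa$ action on the target.

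So fix $\gamma=\smallmat \in \sigop$ and $f\in\bA[r]$. First I would unwind the definition of the action on $\bAsbig$ in \eqref{eqn:act} to write
\[
\gamma\cdot(f(z)\otimes 1) = \aut(z,w)\cdot f\!\left(\frac{b+dz}{a+cz}\right),
\]
viewed as an element of $\bA[r]\ot\bAr$. Applying $\kappa$, and using that $\kappa$ is multiplicative and leaves the variable $z$ untouched, yields
\[
\kappa\bigl(\gamma\cdot(f(z)\otimes 1)\bigr) = \kappa(\aut(z,w))\cdot f\!\left(\frac{b+dz}{a+cz}\right).
\]
By Lemma \ref{lemma:Kconverge}(3), $\kappa(\aut(z,w)) = F_{\kappa,a,c}(z)$, so the right-hand side becomes $F_{\kappa,a,c}(z)\cdot f\bigl(\frac{b+dz}{a+cz}\bigr)$, which is precisely $(\gamma\cdot_\kappa f)(z) = \gamma\cdot_\kappa\kappa(f(z)\otimes 1)$ by the definition of the weight $\kappa$ action on $\bA[r]$.

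There is essentially no obstacle: the entire content is Lemma \ref{lemma:Kconverge}(3), which has already done the interpolation work, together with the observation that $\kappa$ commutes with the purely $z$-dependent factor $f\bigl(\frac{b+dz}{a+cz}\bigr)$. The only mild point to be careful about is verifying that the product $\aut(z,w)\cdot f\bigl(\frac{b+dz}{a+cz}\bigr)$ actually lies in $\bA[r]\ot\bAr$ so that applying $\kappa$ is legitimate, but this follows from Lemma \ref{lemma:Kconverge}(1)--(2) (which puts $\aut(z,w)$ in $\bA[p^h]\ot\bAr$ for $h<c_p$) together with the hypothesis $r<p^{c_p}$ and the fact that $\frac{b+dz}{a+cz}\in\bA[r]$ for $\gamma\in\sigop$, exactly as in the verification that the weight $\kappa$ action preserves $\bA[r]$.
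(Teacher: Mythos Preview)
Your proposal is correct and follows the same approach as the paper, which simply states that the lemma ``follows immediately from the definition of both actions.'' You have merely made explicit the unwinding of those definitions, invoking Lemma~\ref{lemma:Kconverge}(3) at the key step; this is exactly what the paper leaves implicit.
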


\begin{proof}
This lemma follows immediately from the definition of both actions.
\end{proof}

We mention here two basic properties of the automorphy factor $\aut(z,w)$, both of which follow directly from the definition and which will be useful later.

\begin{lemma}
\label{lemma:Kprops}
We have
\begin{enumerate}
\item $\autv{1}{0}{m}(z,w) = 1$,
\item \label{lemma:Kpropspart2}$\aut(z,w)\big|_{w = 0} = \omega(a)^m$.
\end{enumerate}
\end{lemma}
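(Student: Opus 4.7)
The plan is to prove both parts by direct substitution into the closed-form expression
\[
\aut(z,w) = \omega(a)^m \cdot (1+pw)^{\log_\gamma\!\left(\frac{a+cz}{\omega(a)}\right)}
\]
given in equation \eqref{eqn:Kacm_defn}, using the fact that the exponential/binomial series for $(1+pw)^L$ converges whenever $L$ has $p$-adic valuation greater than $\tfrac{1}{p-1}$ (which is guaranteed here by Lemma \ref{lemma:logiso}, since $\log_\gamma\!\left(\tfrac{a+cz}{\omega(a)}\right)$ always has positive valuation on the relevant domain).

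For part (1), I would substitute $a=1$ and $c=0$. Then $\omega(1)^m = 1$, so the leading factor collapses, and the argument of $\log_\gamma$ becomes $\frac{1+0\cdot z}{\omega(1)} = 1$, giving $\log_\gamma(1) = 0$. Hence $(1+pw)^0 = 1$, and $\autv{1}{0}{m}(z,w) = 1\cdot 1 = 1$, as claimed.

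For part (2), I would substitute $w=0$ into the series form
\[
\aut(z,w) = \omega(a)^m \cdot \sum_{n=0}^\infty \binom{\log_\gamma\!\left(\frac{a+cz}{\omega(a)}\right)}{n} (pw)^n.
\]
All terms with $n \geq 1$ vanish identically (since they carry a positive power of $pw$), while the $n=0$ term equals $1$. Therefore $\aut(z,w)\big|_{w=0} = \omega(a)^m$, proving the second statement.

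Neither step presents any real obstacle: both are immediate consequences of the definition, and the only minor point to keep in mind is that the infinite series in \eqref{eqn:Kacm_defn} can be manipulated term-by-term because of the convergence guaranteed by Lemma \ref{lemma:Kconverge}(1). The short, computational character of the proof explains why the statement is recorded as a lemma of "basic properties" rather than as a theorem.
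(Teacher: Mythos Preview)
Your proof is correct and matches the paper's approach: the paper simply states that both parts ``follow directly from the definition'' without writing out the details, and your direct substitutions into equation~\eqref{eqn:Kacm_defn} are exactly what is intended. The only minor quibble is that your aside about convergence of $(1+pw)^L$ requiring $\ord_p(L) > \tfrac{1}{p-1}$ is not quite the right condition here (convergence for $|w|\le 1$ is already handled by Lemma~\ref{lemma:Kconverge}), but this does not affect the argument since both parts reduce to trivial evaluations of the series.
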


\subsection{Distributions in families over weight space}
\label{sec:distfam}

In this section, we discuss families of distributions and their $\sigop$-actions.
To consider families of distributions, a natural place to begin is the space
$$
\bD[r](\bAr) := \Hom_{\cont}(\bA[r],\bAr),
$$
that is, the space of $\bAr$-valued distributions.  Evaluating such distributions at varying $\kappa \in \Wr$ then gives rise to a family of single-variable distributions.  Moreover, these distributions are again quite concrete.  They are uniquely determined by their sequence of moments, and, in this case, each moment is a power series in $w$. 

However, the space $\bD[r](\bAr)$ turns out to be much larger than what we need to work with, and instead, we consider the space $\bDsbig$.  Note that there is a natural injection:
$$
\bDsbig = \Hom_{\cont}(\bA[r],\Qp) \ot \bAr \hookrightarrow \Hom_{\cont}(\bA[r],\bAr) = \bD[r](\bAr),
$$
but this map need not be surjective.  For example, the distribution $\mus \in \bD(\bAr)$ defined by
$$
\mus(z^j) = w^j  \text{~for~each~} j \geq 0
$$
is not in $\bDs$.  To see this, note that every distribution in $\bDs$ is a limit of finite sums of elements of the form $\mu \otimes f$ with $\mu \in \bD$ and $f \in \bAr$.  As such, for each $n$, there are only finitely many coefficients of $f$ which are not in $p^n \Zp$.  In particular, for any fixed element of $\bDs$, in {\it all} of its moments only finitely many coefficients are not in $p^n \Zp$.  Note that the distribution $\mus$ above clearly does not have this property.  




We again have a specialization map
$$
\bDsbig \to \bD[r]
$$
given by evaluation at $\kappa \in \Wr$.   We now seek to give an action of $\sigop$ on $\bDsbig$ which makes the above map equivariant when $\bD[r]$ is given the weight $\kappa$ action.

To do this, first note that $\bD[r]$ is an $\bA[r]$-module via
$$
(g \cdot \mu)(f) = \mu(gf)
$$
where $f,g \in \bA[r]$ and $\mu \in \bD[r]$.  Thus, $\bDsbig$ is naturally an $\bAsbig$-module.  Note also that $\bD[r](\bAr)$ is naturally an $\bAsbig$-module as
$$
\bD[r](\bAr) = \Hom_{\cont}(\bA[r],\bAr) \cong \Hom_{\cont,\bAr}(\bAsbig,\bAr).
$$
Furthermore, we can easily define a weight 0 action of $\sigop$ on $\bD[r](\bAr)$ via
\[ (\mus \big|_0\gamma)(f)=\mus(\gamma\cdot_0f)
\]
for $\mus \in \bD[r](\bAr)$.

\begin{lemma}
Both $\bDsbig$ and $\bD[r](\bAr)$ are $\sigop$-modules via the formula
$$
\mus | \gamma = (\aut(z,w) \cdot \mus) |_0 \gamma,
$$
for
$$
\gamma=\mat\in\sigop.
$$
\end{lemma}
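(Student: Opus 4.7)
The plan is to verify that the formula $\mus | \gamma = (\aut(z,w) \cdot \mus) |_0 \gamma$ defines a right $\sigop$-action, which requires checking three things: that each map $\mus \mapsto \mus|\gamma$ is a well-defined endomorphism of the ambient space, that the identity matrix acts trivially, and that the composition law $(\mus|\gamma_1)|\gamma_2 = \mus|(\gamma_1\gamma_2)$ holds.

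Well-definedness and the identity axiom should be handled first. Since $\aut(z,w) \in \bAsbig$ by Lemma \ref{lemma:Kconverge}, and $\bDsbig = \bD[r]\ot\bAr$ is naturally an $\bAsbig$-module (via the $\bA[r]$-module structure on $\bD[r]$), multiplication by $\aut(z,w)$ preserves $\bDsbig$; the same holds for $\bD[r](\bAr) \cong \Hom_{\cont,\bAr}(\bAsbig,\bAr)$ by dualizing multiplication on the source. The weight-$0$ substitution $(\gamma\cdot_0 f)(z) = f\!\left(\frac{b+dz}{a+cz}\right)$ preserves $\bA[r]$ since $a\in\Zpx$ and $c\in p\Zp$, so $|_0 \gamma$ preserves $\bD[r]$ and, by $\bAr$-linear extension, both $\bDsbig$ and $\bD[r](\bAr)$. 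For $\gamma = \mathrm{id}$, one has $\autv{1}{0}{m}(z,w) = 1$ by Lemma \ref{lemma:Kprops}(1) and $\mathrm{id}\cdot_0 f = f$, so $\mus|\mathrm{id}=\mus$.

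The heart of the argument is the composition law. Setting $\gamma_1\gamma_2 = \psmallmat{A}{B}{C}{D}$, I would first establish the two-variable cocycle identity
\[
\autv{a_1}{c_1}{m}(z,w) \cdot \autv{a_2}{c_2}{m}\!\left(\frac{b_1+d_1z}{a_1+c_1z},w\right) = \autv{A}{C}{m}(z,w)
\]
directly from the closed-form expression in \eqref{eqn:Kacm_defn}: the $\omega$-factor collapses to $\omega(a_1a_2)^m = \omega(A)^m$ using $b_1c_2 \in p\Zp$, while the $(1+pw)$-exponents add via additivity of $\log_\gamma$ together with the elementary identity $(a_1+c_1z)\cdot\!\left(a_2+c_2\cdot\frac{b_1+d_1z}{a_1+c_1z}\right) = A+Cz$. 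With this in hand, unpacking $((\mus|\gamma_1)|\gamma_2)(h)$ yields
\[
\mus\!\left(\autv{a_1}{c_1}{m}(z,w)\cdot\gamma_1\cdot_0\!\left(\autv{a_2}{c_2}{m}(z,w)\cdot(\gamma_2\cdot_0 h)\right)\right).
\]
Since $\gamma_1\cdot_0$ substitutes $z \mapsto \frac{b_1+d_1z}{a_1+c_1z}$ and leaves $w$ alone, it distributes across the product to give exactly the left-hand side of the cocycle times $\gamma_1\cdot_0(\gamma_2\cdot_0 h) = (\gamma_1\gamma_2)\cdot_0 h$ (the standard M\"obius composition). Applying the cocycle collapses the expression to $\mus(\autv{A}{C}{m}(z,w)\cdot((\gamma_1\gamma_2)\cdot_0 h)) = (\mus|(\gamma_1\gamma_2))(h)$.

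The main obstacle is the bookkeeping in the associativity calculation---in particular, tracking how $\gamma_1\cdot_0$ interacts with the inner $\aut$-factor through substitution in $z$ (but not in $w$). The cocycle identity itself is a short direct computation thanks to the exponential form of $\aut(z,w)$, which converts a product of characters into a sum of logarithms. As a sanity check, one could alternatively verify the cocycle (and hence associativity) by specializing at $\kappa \in \Wr$ and invoking the analogous one-variable cocycle for $F_{\kappa,a,c}$, which is implicit in the fact that $|_\kappa$ is already a right action on $\bD[r]$, together with the fact that specializations separate elements of $\bAsbig$.
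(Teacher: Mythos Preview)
Your argument is correct and considerably more explicit than the paper's. The paper's proof is two lines: it asserts that the formula ``clearly gives an action'' on $\bD[r](\bAr)$, and then cites \cite[page~30, Remark~3.1]{Bellaiche} for the fact that the subspace $\bDsbig \subseteq \bD[r](\bAr)$ is preserved. You fill in both halves directly.

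For the associativity, your proof of the two-variable cocycle identity for $\aut(z,w)$ from the closed form \eqref{eqn:Kacm_defn} is exactly what underlies the paper's ``clearly''; writing it out is a genuine service, and your observation that $b_1c_2\in p\Zp$ forces $\omega(A)=\omega(a_1a_2)$ is the one nontrivial step. The alternative check you mention---specializing to $\kappa\in\Wr$ and using that the single-weight action is already known to be associative---is also valid and is perhaps closer in spirit to what the paper has in mind, given how the $\sigop$-action on $\bAsbig$ was justified in the preceding lemma via specialization.

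For the preservation of $\bDsbig$, your argument is again direct: multiplication by $\aut(z,w)$ preserves $\bDsbig$ because the paper has explicitly made $\bDsbig$ an $\bAsbig$-module just above, and the weight-$0$ action, being a bounded $\Qp$-linear endomorphism of $\bD[r]$, extends $\bAr$-linearly to the completed tensor product. This renders the citation to \cite{Bellaiche} unnecessary. The paper's route is shorter but outsources this point; yours is self-contained within the paper's own setup.
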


\begin{proof}
This formula clearly gives an action on $\bD[r](\bAr)$.  To complete the proof, we must check that $\bDsbig \subseteq \bD[r](\bAr)$ is preserved by this action.  This detail is verified in \cite[page 30, Remark 3.1]{Bellaiche}; we note that in \cite{Bellaiche}, the notation $\bD[r](\bAr)$ refers to $\bD[r] \hat{\otimes} R$.
\end{proof}

We will also have the need to consider the larger distribution space
$$
\Docs := \Hom_{\cont}(\Aoc,\bAr)
$$
(when we solve the ``difference equation").  This space again is naturally a $\sigop$-module and we note that as before 
$$
\mus \in \Docs \text{~if~and~only~if~} \{|\mus(z^j)|\} \text{~is~} O(r^j) \text{~as~} j \to \infty \text{~for~all~} r>1.
$$

The following lemma will allow us to use the Hecke operator $U_p$ to pass from $\Docs$-valued modular symbols to $\bDs$-valued ones.

\begin{lemma}
\label{lemma:Upimprove}
If $\mus \in \Docs$, then $\mus \big|\!\smatrix{1}{a}{0}{p} \in \bDs$.
\end{lemma}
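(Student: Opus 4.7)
The plan is to simplify the action using the special form of the matrix, then exhibit the result as a $p$-adically convergent series of elementary tensors inside $\bD \otz \bAr$. For $\gamma = \smatrix{1}{a}{0}{p}$, the top-left and bottom-left entries are $1$ and $0$, so Lemma~\ref{lemma:Kprops}(1) gives $K_{1,0,m}(z,w)=1$ and the twisted $\sigop$-action collapses to the weight-$0$ action. Explicitly,
\[
(\mus|\gamma)(z^j) \;=\; \mus\bigl((a+pz)^j\bigr) \;=\; \sum_{i=0}^{j} \binom{j}{i}\, a^{j-i}\, p^{i}\, \mus(z^i).
\]

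Next, I will introduce, for each $i\geq 0$, the distribution $\mu_i \in \bD$ specified on monomials by $\mu_i(z^j) = \binom{j}{i}\,a^{j-i}$ for $j\geq i$ and $\mu_i(z^j)=0$ otherwise. Since $a \in \Zpx$ and the binomial coefficients lie in $\Zp$, these moments are bounded by $1$, so $\mu_i \in \bDo$ with $\|\mu_i\|_\bD \leq 1$. Setting $f_i := p^{i}\,\mus(z^i) \in \bAr$, a moment-by-moment comparison will identify
\[
\mus\big|\gamma \;=\; \sum_{i=0}^{\infty}\, \mu_i \otimes f_i
\]
inside the ambient space $\bD(\bAr) \supset \bDs$; the task then reduces to showing this series converges in $\bDs = \bD \otz \bAr$.

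Convergence in the completed tensor-product norm amounts to verifying $\|\mu_i\|\cdot\|f_i\|_{\bAr} \to 0$. The hypothesis $\mus \in \Docs$ says precisely that $\|\mus(z^i)\|_{\bAr} = O(r^i)$ for every $r>1$; choosing any $1<r<p$ gives
\[
\|\mu_i \otimes f_i\| \;\leq\; \|\mu_i\|\cdot p^{-i}\cdot \|\mus(z^i)\|_{\bAr} \;\leq\; C_r\,(r/p)^i \;\longrightarrow\; 0,
\]
which is the desired convergence, so $\mus|\gamma \in \bDs$. The only conceptual point is the cancellation $p^{-i}\cdot r^i = (r/p)^i$: the factor of $p$ contributed by the bottom-right entry of $\gamma$ exactly absorbs the allowed sub-geometric growth of the moments of an element of $\Docs$. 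This is precisely why this $U_p$-type matrix upgrades an overconvergent family of distributions to a genuine one, and I do not anticipate any obstacle beyond recording these routine estimates carefully.
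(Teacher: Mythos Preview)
Your proof is correct and follows essentially the same approach as the paper's: both use $K_{1,0,m}=1$ to reduce to the weight-$0$ action, compute the moments explicitly, and then exploit the $O(r^j)$ bound on $\|\mus(z^j)\|$ together with the factor $p^i$ to exhibit $\mus|\gamma$ as a limit in $\bDs$. Your version is slightly more explicit in writing down the convergent series $\sum_i \mu_i \otimes f_i$ of elementary tensors, whereas the paper phrases the same estimate as ``modulo $p^M$ only finitely many moments of $\mus$ contribute''; one minor slip is that you write $a\in\Zpx$, but in this matrix $a$ is the upper-right entry and need only lie in $\Zp$ (e.g.\ $a=0$ is allowed), though this does not affect your bound $\|\mu_i\|\le 1$.
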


\begin{proof}
Since $\mus \in \Docs$, we have that the sequence $\{| \mus(z^j)| \}$ is $O(r^j)$ for every $r>1$. Furthermore, we have that
$$
(\mus \big|\!\smatrix{1}{a}{0}{p}) (z^j) = 
\mus  (\autv{1}{0}{m}(z,w) (a+pz)^j) = 
\sum_{n=0}^j \binom{j}{n} a^{j-n} p^n \mus(z^n),
$$
since $\autv{1}{0}{m}(z,w) = 1$ by Lemma \ref{lemma:Kprops}.
Because $p^n \mus(z^n) \to 0$, it is clear that the moments of $\mus \big|\!\smatrix{1}{a}{0}{p}$ are bounded and thus this distribution is in $\bD(\bAr)$.  Furthermore, 
 for any $M$, for $n$ large enough $p^n \mus(z^n) \in p^M \bAro$. Thus, modulo $p^M \bAro$, the moments of the distribution $\mus \big|\!\smatrix{1}{a}{0}{p}$ only depend on finitely many moments of $\mus$.  In particular, $\mus \big|\!\smatrix{1}{a}{0}{p}$ can be written as a limit of elements of $\bD \otimes \bAr$, and hence  $\mus \big|\!\smatrix{1}{a}{0}{p} \in \bDs$.
 \end{proof}

\subsection{Analyzing the automorphy factor}

By Lemma \ref{lemma:Kconverge},  $\aut(z,w)$ is in $\bA \ot \bAr$.  In this section, we will further analyze the coefficients of this automorphy factor in order to gain better control of the $\sigop$-action on families of distributions.
We begin by introducing some rings that will be useful for this purpose.

Consider an abstract Tate algebra, $\Qp\tate{x}$ and define
$$
S_x := \left\{  \sum_n a_n x^n \in \Qp\tate{x} ~:~ \ord_p(a_n) \geq n c_p
\right\} 
$$
where $c_p = 1 - \frac{1}{p-1} = \frac{p-2}{p-1}$, as before.  Note that by definition $S_x \subseteq \Zp\tate{x}$.

\begin{lemma}
\label{lemma:subring}
$S_x$ is a subring of $\Zp\tate{x}$.
\end{lemma}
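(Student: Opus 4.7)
The plan is to verify the three subring axioms for $S_x$ inside $\Zp\tate{x}$: containing $1$, closure under addition, and closure under multiplication. The containment $S_x \subseteq \Zp\tate{x}$ is essentially immediate from the definition, since $c_p = \frac{p-2}{p-1} \geq 0$ for all odd primes $p$, so $\ord_p(a_n) \geq n c_p \geq 0$ implies $a_n \in \Zp$, and since the sequence $\{a_n\}$ must tend to $0$ (in fact, it does so at a geometric rate). Containment of $1$ is trivial.

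For closure under addition, if $f = \sum a_n x^n$ and $g = \sum b_n x^n$ are both in $S_x$, then the coefficient of $x^n$ in $f+g$ is $a_n + b_n$, and by the non-archimedean triangle inequality $\ord_p(a_n + b_n) \geq \min(\ord_p(a_n), \ord_p(b_n)) \geq n c_p$, as required.

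The only step with any content is closure under multiplication. If $f = \sum a_n x^n$ and $g = \sum b_n x^n$ lie in $S_x$, then the $n$-th coefficient of $fg$ is $c_n = \sum_{i+j=n} a_i b_j$. Applying the non-archimedean triangle inequality and then the hypotheses on $f$ and $g$ gives
\[
\ord_p(c_n) \geq \min_{i+j=n}\bigl(\ord_p(a_i) + \ord_p(b_j)\bigr) \geq \min_{i+j=n}(i c_p + j c_p) = n c_p,
\]
which is exactly the condition for $fg \in S_x$. Since these bounds on coefficients force $c_n \to 0$ as well, the product is a genuine element of $\Qp\tate{x}$ and not merely a formal power series.

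Essentially the whole argument is a direct manipulation of valuations, and there is no real obstacle; the key structural point is simply that the defining condition $\ord_p(a_n) \geq n c_p$ is a linear growth condition in $n$, and linear conditions of this form are automatically preserved under Cauchy products. The only thing worth flagging is to verify once that $c_p \geq 0$ so that the inclusion into $\Zp\tate{x}$ makes sense.
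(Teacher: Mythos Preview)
Your proof is correct and follows the same approach as the paper; the key step in both is the valuation estimate $\ord_p(a_i b_j) \geq i c_p + j c_p = n c_p$ for closure under multiplication. Your write-up is actually more thorough than the paper's, which only checks multiplicativity and leaves the rest implicit.
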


\begin{proof}
We only need to check that $S_x$ is closed under multiplication.  To this end, let $f = \sum a_n x^n$ and $g = \sum b_n x^n$.  Then the $n$-th coefficient of $fg$ equals $\sum_{i+j=n} a_i b_j$, and we have 
$$
\ord_p( a_i b_j) = \ord_p(a_i) + \ord_p(b_j) \geq i \cdot c_p + j \cdot c_p = n \cdot c_p.
$$
Thus $fg \in S_x$ as desired.
\end{proof}

\begin{lemma}
\label{lemma:Sinvert}
If $f \in S_x$ with $f(0) \in \Zpx$, then $f^{-1} \in S_x$.
\end{lemma}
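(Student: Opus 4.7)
The plan is to write $f$ as $a_0(1 - g)$ where $g$ has no constant term, and then show that the geometric series $\sum_{k \geq 0} g^k$ defines an inverse lying in $S_x$. Specifically, since $a_0 = f(0) \in \Zpx$, set $g = -\sum_{n \geq 1} (a_n/a_0) x^n$, so that $f = a_0(1-g)$. Dividing by the unit $a_0$ does not change $p$-adic valuations, so $g \in S_x$ and $g(0) = 0$.

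Next I would verify convergence of $\sum_{k \geq 0} g^k$ on two levels. First, as a formal power series: because $g$ has zero constant term, $g^k$ has no term of degree less than $k$, so for each fixed $n$ only the terms with $k \leq n$ contribute to the coefficient of $x^n$ in the sum, making it a well-defined formal power series. Second, as an element of $\bA$ (i.e., coefficients tending to $0$): by Lemma \ref{lemma:subring}, $g^k \in S_x$ for every $k$, and in particular the coefficient of $x^n$ in $g^k$ has $p$-adic valuation at least $nc_p$. The same bound therefore holds for the coefficient of $x^n$ in $\sum_k g^k$ (a minimum over finitely many terms), and since $nc_p \to \infty$ the sum defines an element of $\Zp\tate{x}$ which moreover lies in $S_x$.

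Finally, a standard formal manipulation gives $(1-g)\cdot\sum_{k \geq 0} g^k = 1$ in $\Zp\tate{x}$, so $f^{-1} = a_0^{-1}\sum_{k \geq 0} g^k$. Since $a_0^{-1} \in \Zpx \subseteq S_x$ and $S_x$ is a ring, we conclude $f^{-1} \in S_x$.

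The only real content here is the valuation bound on coefficients of $g^k$, which is precisely the computation carried out in the proof of Lemma \ref{lemma:subring}; the rest is bookkeeping to ensure the geometric series converges in both the formal and the $p$-adic senses. I do not anticipate a genuine obstacle.
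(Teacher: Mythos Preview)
Your proof is correct. The paper takes a slightly different but closely related route: it writes $f^{-1}=\sum_j b_j x^j$ and shows by induction on $n$ that $\ord_p(b_n)\geq nc_p$, using the recursion $b_n=a_0^{-1}\sum_{i=1}^n a_i b_{n-i}$ together with $\ord_p(a_i)\geq ic_p$. Your approach instead packages the same valuation computation by invoking the ring property of $S_x$ (Lemma~\ref{lemma:subring}) and summing the geometric series for $(1-g)^{-1}$. Both arguments are standard and essentially equivalent; yours has the mild advantage of reusing the subring lemma rather than repeating its content, while the paper's direct induction avoids any discussion of convergence of the infinite sum.
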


\begin{proof}
Let $f(x) = \sum_i a_i x^i$ and $g(x) = \sum_j b_j x^j$ with $f \cdot g = 1$.  
We check inductively that $\ord_p(b_n) \geq n \cdot c_p$.
  For $n=0$, this is immediate as $b_0 = a_0^{-1}$.  For $n>0$, we have
$$
b_n = \frac{1}{f(0)} \sum_{i=1}^n a_i b_{n-i}.
$$
By induction, for $i>0$ we have $\ord_p (b_{n-i}) \geq (n-i) \cdot c_p$,
and thus
$$
\ord_p(b_n) \geq \min_i\{ \ord_p(  a_i b_{n-i} ) \} \geq i \cdot c_p + (n-i) c_p = n \cdot c_p
$$
as desired.
\end{proof}

\begin{lemma}
\label{lemma:bounded}
If $r \in \Q$ and $f(x) = \sum_n a_n x^n \in \Qp\llbracket x\rrbracket$ are such that
\begin{enumerate}
\item $f(x)$ converges for all $x$ in the open disc of radius $p^r$ centered around 0, and
\item $|f(x)| \leq 1$ for all such $x$,
\end{enumerate}
then $\ord_p(a_n) \geq nr$.
\end{lemma}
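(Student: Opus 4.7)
The plan is to deduce the coefficient bound from the non-archimedean Maximum Modulus Principle applied on closed subdiscs, then pass to the limit as the radius approaches $p^r$.

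First I would fix a rational number $s$ with $s < r$ and restrict attention to the closed disc of radius $p^s$ around $0$. Since $f$ converges on the open disc of radius $p^r$, after the substitution $y = x/p^s$ the series becomes $\sum_n a_n p^{ns} y^n \in \Qp\tate{y}$, so $f$ defines an element of the Tate algebra on the closed disc of radius $p^s$. Assumption $(2)$ gives $|f(x)| \leq 1$ for every $x$ in this disc, hence
\[ \sup_{|x| \leq p^s} |f(x)| \leq 1.
\]

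Next, I would invoke the non-archimedean Maximum Modulus Principle for Tate algebras (the same result cited in the proof of Lemma \ref{lemma:Fk}, namely \cite[Proposition 3 of \S5.1.4]{BGR}), which identifies the sup norm on the closed disc of radius $p^s$ with the Gauss norm of the coefficients:
\[ \sup_{|x| \leq p^s} |f(x)| = \max_n |a_n|\, p^{ns}.
\]
Combining this with the displayed inequality yields $|a_n|\, p^{ns} \leq 1$ for every $n$, i.e.\ $\ord_p(a_n) \geq ns$.

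Finally, since this bound holds for every rational $s < r$, taking the supremum over such $s$ (or letting $s \to r^-$ through rationals, using that the inequalities are strict on $s$ but that $\ord_p(a_n)$ is independent of $s$) gives $\ord_p(a_n) \geq nr$, as required. The only point requiring care is the passage to the limit, which is immediate because $\ord_p(a_n)$ is a fixed rational number (or $+\infty$) independent of $s$; otherwise the argument is essentially a direct application of the maximum principle, and I do not expect any real obstacle.
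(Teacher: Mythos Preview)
Your proof is correct and follows essentially the same approach as the paper: both rely on the identification of the Gauss norm with the sup norm. The paper makes the single substitution $g(x)=f(x/p^r)$ to land in the ring of bounded functions on the open unit disc of $\Cp$ and then invokes ``Gauss norm equals sup norm'' there directly, whereas you restrict to closed subdiscs of radius $p^s$ with $s<r$, apply the Tate-algebra maximum principle, and let $s\to r^-$; this limiting step is precisely what underlies the paper's one-line appeal on the open disc, so the two arguments are the same in substance.
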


\begin{proof}
Write $g(x) = f(x/p^r)$ which is then a power series which converges on the open unit disc of $\Cp$. This power series is bounded in size by 1 and thus is in $\O_{\Cp}\llbracket x\rrbracket$ (since the Gauss norm equals the sup norm).  Thus, $a_n/p^{rn} \in \O_{\Cp}$ as desired.
\end{proof}

\begin{remark}
From Lemma \ref{lemma:bounded}, we have that  $S_x$ is simply the collection of $\Qp$-power series which converge on the disc of radius $p^{c_p}$ and all of whose values have size less than or equal to 1.  This gives another way to see that $S_x$ is a ring.
\end{remark}

\begin{thm}
\label{thm:autsSz}
For $\aut(z,w)$ as defined in equation~\eqref{eqn:Kacm_defn}, we have
$$
\aut(z,w) \text{ is in } S_z \otz \bAro.
$$
\end{thm}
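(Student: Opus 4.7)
The plan is to view $\aut(z,w)$ as a power series in $z$ with coefficients $a_n(w)$ in $\bAr$, and then bound those coefficients using the previously established pointwise estimates on $F_{\kappa,a,c}(z)$ together with a maximum modulus argument in the $w$-variable. Concretely, write
\[
\aut(z,w) = \sum_{n \geq 0} a_n(w)\, z^n, \qquad a_n(w) \in \bAr,
\]
which makes sense since Lemma~\ref{lemma:Kconverge}(1) already places $\aut(z,w)$ inside $\bA[p^h] \ot \bAr$ for any $h < c_p$ (in particular inside $\Qp\tate{z,w}$). The goal is to upgrade this to the precise statement $a_n(w) \in \bAro$ with $\ord_p(a_n(w)) \geq n c_p$, where $\ord_p$ is the Gauss valuation on $\bAr$.

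First I would fix an arbitrary $w_0 \in \Cp$ with $|w_0| \leq 1$ and let $\kappa_{w_0} \in \Wr$ be the unique character with $(\kappa_{w_0}(\gamma)-1)/p = w_0$. Then Lemma~\ref{lemma:Kconverge}(3) gives
\[
\aut(z,w_0) = F_{\kappa_{w_0},a,c}(z),
\]
and Lemma~\ref{lemma:rigan} tells us that this specialization converges on the open disc of radius $p^{c_p}$ and satisfies $|F_{\kappa_{w_0},a,c}(x)| \leq 1$ there. Applying Lemma~\ref{lemma:bounded} with any rational $r < c_p$ yields $\ord_p(a_n(w_0)) \geq n r$, and taking the supremum over such $r$ gives $\ord_p(a_n(w_0)) \geq n c_p$ (note $c_p = (p-2)/(p-1) \in \Q$, but this rationality is not needed here, only the supremum).

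Next I would promote these pointwise bounds to a Gauss-norm bound on $a_n(w) \in \bAr$. Since $\bAr = \Qp\tate{w}$, the Gauss norm of an element equals its sup norm over the closed unit disc $|w| \leq 1$ (see the Maximum Modulus Principle cited in the proof of Lemma~\ref{lemma:Fk}). Hence the uniform estimate $\ord_p(a_n(w_0)) \geq n c_p$ across all $w_0$ in the closed unit disc gives $\ord_p(a_n) \geq n c_p$ in $\bAr$. In particular each $a_n$ lies in $\bAro = \Zp\tate{w}$.

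Finally, I would assemble these estimates into the statement that $\aut(z,w) \in S_z \otz \bAro$: the elements of this completed tensor product are precisely the power series $\sum_n b_n(w)\, z^n$ with $b_n \in \bAro$ and $\ord_p(b_n) \geq n c_p$ (which automatically forces convergence on the polydisc of radii $(p^{c_p},1)$), and we have verified both conditions for the $a_n(w)$. The only substantive step is the second one — the passage from pointwise bounds on the fibers $\aut(z,w_0)$ to a Gauss-norm bound on the power-series coefficient $a_n(w) \in \bAr$ — but this is exactly what the maximum modulus principle on a Tate algebra provides, so there should be no real obstacle; the remaining work is bookkeeping about the tensor-product topology.
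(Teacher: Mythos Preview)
Your proposal is correct and follows essentially the same approach as the paper: write $\aut(z,w)=\sum_n a_n(w)z^n$, fix $w_0$ in the closed unit disc, use the pointwise bounds on the specialization (from Lemma~\ref{lemma:Kconverge}/Lemma~\ref{lemma:rigan}) together with Lemma~\ref{lemma:bounded} to get $\ord_p(a_n(w_0))\geq nc_p$, and then invoke Gauss norm $=$ sup norm to promote this to a bound on $a_n(w)\in\bAro$. Two small remarks: since $c_p=(p-2)/(p-1)\in\Q$ and Lemma~\ref{lemma:rigan} gives convergence and boundedness on the \emph{open} disc of radius $p^{c_p}$, you may apply Lemma~\ref{lemma:bounded} directly with $r=c_p$ rather than taking a supremum; and your detour through the character $\kappa_{w_0}$ is just re-deriving Lemma~\ref{lemma:Kconverge}(1)--(2), which you could cite directly.
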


\begin{proof}

Write
$$
\aut(z,w) = \sum_{j=0}^\infty R_j(w) z^j= \sum_{j=0}^\infty T_j(z) w^j.
$$
To prove, $\aut(z,w) \in S_z \otz \bAr^0$, we must show that $T_j(z) \in S_z$ for each $j \geq 0$.  Thus, we must show that the coefficient of $z^i$ in $T_j(z)$ has $p$-adic valuation at least $ic_p$ for all $i,j \geq 0$.  But this is equivalent to showing that $p^{i c_p}$ divides $R_i(w)$ in $\bAro$ for all $i \geq 0$.

Next fix some $w_0$ with $|w_0| \leq 1$.  Then $\aut(z,w_0)$ is a power series which converges on the open disc of radius $p^{c_p}$ and all of its values have size bounded by 1 on this disc (by Lemma \ref{lemma:Kconverge}).   Thus, by Lemma \ref{lemma:bounded}, $R_i(w_0)$ has valuation at least $i c_p$.  But since this is true for every $w_0$ in the closed unit disc, we have that every coefficient of $R_i(w)$ has valuation at least $ic_p$ (since the Gauss norm is the same as the sup norm).
\end{proof}

\begin{thm}
\label{thm:autsSw}
We have
$$
\aut(z,w) \text{~is~in~} \bAo \otz S_w.
$$
\end{thm}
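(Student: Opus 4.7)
The plan is to dualize the argument used in Theorem \ref{thm:autsSz}: instead of extracting the coefficient of $z^j$ and bounding its size as a function of $w$, we fix $z$ and extract the coefficient of $w^j$. Concretely, write
$$
\aut(z,w) = \sum_{j=0}^\infty T_j(z) w^j.
$$
By Lemma \ref{lemma:Kconverge}, $T_j(z) \in \bA$ for each $j$. To conclude $\aut(z,w) \in \bAo \otz S_w$, it suffices to show that every coefficient of $T_j(z)$, as a power series in $z$, has $p$-adic valuation at least $jc_p$. Since the Gauss norm equals the sup norm on $\bA$, this reduces to the pointwise estimate $|T_j(z_0)| \leq p^{-jc_p}$ for every $z_0\in\O_{\Cp}$ with $|z_0|\leq 1$.

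For such a fixed $z_0$, I would regard $g(w) := \aut(z_0,w)$ as a one-variable power series in $w$ and apply Lemma \ref{lemma:bounded} with $r = c_p$. This requires verifying two things: (i) $g(w)$ converges on the open disc of radius $p^{c_p}$, and (ii) $|g(w)| \leq 1$ on this disc. Given (i) and (ii), Lemma \ref{lemma:bounded} immediately yields $\ord_p(T_j(z_0)) \geq jc_p$, completing the proof.

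The main technical step will be establishing (i) and (ii) using the closed form
$$
g(w) = \omega(a)^m \cdot (1+pw)^{L(z_0)}, \qquad L(z_0) := \log_\gamma\!\left(\tfrac{a+cz_0}{\omega(a)}\right) = \frac{\log(1 + u_0)}{\log \gamma},
$$
where $u_0 := \frac{a+cz_0}{\omega(a)} - 1$. Since $a \in \Zpx$ forces $a/\omega(a) \equiv 1 \pmod p$ and $c \in p\Zp$ while $|z_0|\leq 1$, we have $u_0 \in p\O_{\Cp}$, hence $\ord_p(u_0) \geq 1 > \tfrac{1}{p-1}$. Lemma \ref{lemma:logiso} then gives $\ord_p(\log(1+u_0)) \geq 1$, and combined with $\ord_p(\log\gamma) = 1$, we deduce $|L(z_0)| \leq 1$. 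I would then rewrite $(1+pw)^{L(z_0)} = \exp(L(z_0)\log(1+pw))$ and argue that for $|w| < p^{c_p}$ we have $\ord_p(pw) > \tfrac{1}{p-1}$, so Lemma \ref{lemma:logiso} again gives $\ord_p(\log(1+pw)) > \tfrac{1}{p-1}$, whence $\ord_p(L(z_0)\log(1+pw)) > \tfrac{1}{p-1}$. This simultaneously ensures that $\exp$ converges and that its value lies in $1 + \m_{\Cp}$, so $|g(w)| = 1$ on the whole open disc of radius $p^{c_p}$.

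The main obstacle is really just the careful bookkeeping of valuations in this last step---ensuring the strict inequality $\ord_p(L(z_0)\log(1+pw)) > \tfrac{1}{p-1}$ holds uniformly on the open disc $|w| < p^{c_p}$ so that $\exp$ is a unit. Everything else is formal: the strategy (fix one variable, bound on a larger disc, invoke Lemma \ref{lemma:bounded}) is identical to the proof of Theorem \ref{thm:autsSz}, and the reduction via Gauss-equals-sup norm is the same device used there.
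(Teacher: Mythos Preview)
Your proposal is correct and follows essentially the same strategy as the paper: fix $z_0$ with $|z_0|\leq 1$, show $g(w)=\aut(z_0,w)$ is bounded by $1$ on the open disc $|w|<p^{c_p}$, and then invoke Lemma~\ref{lemma:bounded} together with the Gauss--sup norm equality exactly as in Theorem~\ref{thm:autsSz}.

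The only point of divergence is how the bound $|g(w)|\leq 1$ is obtained. You pass through the identity $(1+pw)^{L(z_0)}=\exp\!\bigl(L(z_0)\log(1+pw)\bigr)$ and use the standard convergence ranges of $\exp$ and $\log$. The paper instead works directly with the binomial expansion $\sum_n p^n\binom{L}{n}w^n$: since $|L(z_0)|\leq 1$ one has $\ord_p\binom{L(z_0)}{n}\geq -\ord_p(n!)$, and then Lemma~\ref{lemma:factorial} gives $\ord_p\bigl(p^n\binom{L(z_0)}{n}w^n\bigr)\geq n(c_p+\ord_p(w))$, which is positive and tends to infinity when $\ord_p(w)>-c_p$. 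Your route is perhaps more conceptual (it explains the threshold $p^{c_p}$ as exactly the radius where $\ord_p(pw)>1/(p-1)$, matching the domain of $\exp$); the paper's route is slightly more self-contained, avoiding the need to justify the $\exp$--$\log$ identity for general $L\in\O_{\Cp}$. Either way the argument goes through without difficulty.
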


\begin{proof}
Mimicking the proof of Theorem \ref{thm:autsSz}, it suffices to show that $\aut(z,w)$ converges for $|z| \leq 1$ and $|w| < p^{c_p}$ to something of size less than or equal to 1.  To this end, recall that 
$$
\aut(z,w) 
= \omega(a)^m \cdot 
\sum_{n=0}^\infty p^n \binom{L}{n} w^n
$$
where $L = \log_\gamma(\frac{a+cz}{\omega(a)})$.  Since $p | c$, we have $L \in \Zp\llbracket z\rrbracket$.
Thus, for $|z|<1$, we have
$$
\ord_p \left(  p^n \binom{L}{n} w^n \right) = n - \ord_p(n!) + n \ord_p(w)
 \geq n \cdot \left( c_p + \ord_p(w) \right).
$$
If $\ord_p(w) > - c_p$ this expression is always positive and  goes to infinity as $n \to \infty$ as desired.
\end{proof}


The following lemma will be useful later.

\begin{lemma}
\label{lemma:cS}
Let
$$
g(w) := \left.\frac{\partial}{\partial z} \aut(z,w)\right|_{z=0}.
$$
Then $\displaystyle \frac{g(w)}{w}$ is in $c \cdot S_w^\times$.
\end{lemma}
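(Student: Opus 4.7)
The plan is to compute $g(w)$ directly from the closed form in equation \eqref{eqn:Kacm_defn}, which expresses $\aut(z,w) = \omega(a)^m (1+pw)^{L(z)}$ with $L(z) := \log_\gamma\!\left(\frac{a+cz}{\omega(a)}\right)$. Differentiating in $z$ via the chain rule and evaluating at $z=0$ yields
\[
g(w) = c \cdot \frac{\omega(a)^m}{a} \cdot \frac{\log(1+pw)}{\log\gamma} \cdot (1+pw)^{L_0},
\]
where $L_0 := L(0) = \log_\gamma(a/\omega(a))$ lies in $\Zp$ since $a/\omega(a) \in 1+p\Zp$ forces $\log(a/\omega(a)) \in p\Zp$ (by Lemma \ref{lemma:logiso}), while $\log\gamma$ has $p$-adic valuation exactly $1$. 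In particular $c$ factors cleanly out of $g(w)$, reducing the claim to showing $g(w)/(cw) \in S_w^\times$.

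For this, I will verify that each of the three remaining factors lies in $S_w$ and identify which are units. The scalar $\omega(a)^m/a$ is already in $\Zpx$. Writing $\log\gamma = pu$ with $u \in \Zpx$ (via Lemma \ref{lemma:logiso}, using that for odd $p$ any topological generator of $1+p\Zp$ satisfies $\gamma - 1 \in p\Zpx$), I expand
\[
\frac{\log(1+pw)}{w\log\gamma} = \sum_{n\geq 0}\frac{(-1)^n p^n}{(n+1)u}\, w^n,
\]
whose constant term $u^{-1}$ lies in $\Zpx$ and whose $n$-th coefficient has valuation $n - \ord_p(n+1)$. Membership in $S_w$ therefore reduces to the bound $\ord_p(n+1) \leq n/(p-1)$, which follows from the elementary inequality $k \leq (p^k - 1)/(p-1)$ for all $k \geq 0$ applied to $k = \ord_p(n+1)$ (since $p^k \leq n+1$). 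So this factor lies in $S_w^\times$. For the last factor, expanding $(1+pw)^{L_0} = \sum_n \binom{L_0}{n} p^n w^n$ and using $\binom{L_0}{n} \in \Zp$ (by continuity, since $L_0 \in \Zp$) shows each coefficient has valuation at least $n \geq nc_p$, placing this factor in $S_w$ with constant term $1$.

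Assembling the three factors and invoking the closure of $S_w$ under multiplication (Lemma \ref{lemma:subring}) together with the inversion criterion of Lemma \ref{lemma:Sinvert} gives $g(w)/(cw) \in S_w^\times$, completing the proof. The main obstacle is the coefficient-by-coefficient valuation check for $\log(1+pw)/(w\log\gamma)$, which is routine but needs to be calibrated against the exact slope $c_p$; the rest is formal manipulation of the closed form for $\aut(z,w)$ and of the multiplicative structure on $S_w$.
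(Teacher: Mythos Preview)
Your proof is correct. The paper takes a different route: rather than differentiating the closed exponential form, it works coefficient-by-coefficient with the binomial series, bounding the valuation of the coefficient of $z$ in $\frac{p^n}{n!}L(L-1)\cdots(L-n+1)$ (using $\ord_p(p^n/n!)\ge n c_p$ and the fact that the coefficient of $z$ in $L$ is divisible by $c$) to get $g(w)\in c\cdot S_w$, and then separately computes the coefficient of $wz$ in $\aut(z,w)$ to check that the constant term of $g(w)/w$ lies in $c\,\Zpx$. Your approach instead produces the explicit factorization $g(w)=c\cdot\frac{\omega(a)^m}{a}\cdot\frac{\log(1+pw)}{\log\gamma}\cdot(1+pw)^{L_0}$ and analyzes each factor. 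This has the advantage that the factor of $c$ and the vanishing at $w=0$ (via $\log(1+pw)$) are visible immediately rather than emerging from a coefficient count, and the required $S_w$-membership reduces to the single elementary bound $\ord_p(n+1)\le n/(p-1)$. The paper's argument, by contrast, stays entirely within the binomial-series presentation and never manipulates the exponential form; both ultimately appeal to Lemma~\ref{lemma:Sinvert} to pass from ``constant term a unit'' to membership in $S_w^\times$.
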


\begin{proof}
We first show that $g(w)$ is in $c \cdot S_w$ (and then automatically $g(w)/w$ is in $c \cdot S_w$). 
The coefficient of $w^n$ in $\aut(z,w)$ is
$$
\frac{p^n}{n!} L(L-1) \dots (L-n+1)
$$
where $L = \log_\gamma(\frac{a+cz}{\omega(a)} )$.  We must show that the coefficient of $z$ in this expression has valuation at least $\ord_p(c) + n c_p$.  But this is easy as 
$\ord_p( p^n / n!) \geq c_p$ and  the coefficient of $z$ in $L$ is always divisible by $c$.  

To finish the proof, it suffices by Lemma \ref{lemma:Sinvert} to check that the coefficient of $w$ in $g(w)$ is in $c \Zpx$.  This coefficient is the same as the coefficient of $w z$ in $\aut(z,w)$ which is
$$
p \cdot \omega(a)^m \frac{\partial}{\partial z} \log_\gamma\left(\frac{a+cz}{\omega(a)}   \right)\big|_{z=0}  =
 \frac{p   }{\log \gamma} \cdot  \frac{\omega(a)^{m}}{a} \cdot c.
$$
which is indeed in $c \Zpx$.
\end{proof}

\section{Families of overconvergent modular symbols}
\label{sec:OMSs}

\subsection{Modular symbols}

We review here the theory of modular symbols as formulated in \cite{AshStevens-Duke,GS,PS1}.  To this end, let $\Delta_0 := \Div^0({\mathbb P}^1(\Q))$ denote the set of degree zero divisors on ${\mathbb P}^1(\Q)$ which we endow with a left action of $\GL_2(\Q)$ via linear fractional transformations.  Let $\Gamma$ denote a congruence subgroup of $\SL_2(\Z)$ and let $V$ denote a right $\Gamma$-module.  We define $\MS{V}$, the space of $V$-valued modular symbols of level $\Gamma$, to be the collection of additive homomorphisms $\varphi: \Delta_0 \to V$ such that $\varphi(\gamma D) = \varphi(D) \big| \gamma^{-1}$ for all $\gamma \in \Gamma$ and $D \in \Delta_0$.

The modules $V$ we will consider in this paper include $\Sym^k(\Qp^2)$, $\bD_k$ and $\bDs$.  The first space has an action of $\Gamma$ while the second two have an action of  $\Gamma_0 := \Gamma \cap \Gamma_0(p)$.  Moreover, in each of these cases, one can extend the action of $\Gamma_0(p)$ to the algebra
\[
	S_0(p)=\left\{\mat\in M_2(\Z):(a,p)=1,p\mid c,\text{ and }ad-bc\neq0\right\}
\]
and thus define a Hecke-action on the corresponding spaces of $V$-valued modular symbols.  

The space $\MS{\Sym^k(\Qp^2)}$ is the space of classical modular symbols; the systems of Hecke-eigenvalues occurring in this space match those occurring in $M_{k+2}(\Gamma)$ (see \cite[Proposition 2.5]{BellaicheDasgupta}).  The space $\MSo{\bD_k}$ is the space of overconvergent modular symbols; the systems of finite slope Hecke-eigenvalues occurring in this space essentially match those occurring in $M^\dag_{k+2}(\Gamma)$, the space of overconvergent modular forms (see \cite[Theorem 7.1]{PS2}).  

Lastly, the space $\MSo{\bDs}$ is the space of {\it families} of overconvergent modular symbols. Indeed, for each $\kappa \in \Wr$, the map $\kappa:\bDs \to \bD_\kappa$ induces a Hecke-equivariant map
\begin{align*}
\spec_\kappa: \MSo{\bDs} \to \MSo{\bD_\kappa}. 
\end{align*}
Thus, for $\Phis \in \MSo{\bDs}$ and $\kappa$ in $\Wr$, we have that  $\spec_\kappa(\Phis)$ is a weight $\kappa$ overconvergent modular symbol, and, moreover,  as $\kappa$ varies, $\spec_\kappa(\Phis)$ varies in a $p$-adic family.

\subsection{Constructing families of overconvergent modular symbols}
\label{sec:random}

In this section, we describe a method of producing ``random" families of overconvergent modular symbols.  Here we follow the methods described in \cite[Section 2]{PS1}  to explicitly write down modular symbols. 

\begin{prop}
\label{prop:msexist}
Assume $\Gamma_0$ is torsion-free.  Then there exist divisors $D_1, \dots, D_t$ in $\Delta_0$ and matrices $\gamma_1, \dots, \gamma_t$ in $\SL_2(\Z)$ such that for any right $\Gamma$-module $V$ and any $\phi \in \MSo{V}$, we have
$$ 
 \phi(\{0\}- \{\infty\}) \big| \Delta = \sum_{j=1}^t \phi(D_j) \big| (\gamma_j-1)
$$
where $\Delta := \smatrix{1}{1}{0}{1} - 1$.
Conversely, for any $v_1, \dots, v_t$ in $V$ satisfying
\begin{equation}
\label{eqn:diff}
v_\infty\big| \Delta = \sum_{j=1}^t v_j\big| (\gamma_j-1),
\end{equation}
there is a unique modular symbol $\phi \in \MSo{V}$ such
that 
$$
\phi(D_j) = v_j
$$
for each $j$.
\end{prop}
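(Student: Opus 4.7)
The plan is to deduce this from Manin's classical presentation of $\Delta_0$ as a $\Z[\SL_2(\Z)]$-module: $\Delta_0$ is generated by $\{0\}-\{\infty\}$, with the only relations coming from the 2-torsion element $\sigma=\smatrix{0}{-1}{1}{0}$ and the 3-torsion element $\tau=\smatrix{0}{-1}{1}{1}$ of $\PSL_2(\Z)$. First I would choose right coset representatives $g_1,\ldots,g_d$ for $\Gamma_0\backslash\SL_2(\Z)$ and set $E_i:=g_i^{-1}(\{0\}-\{\infty\})$. These $E_i$'s generate $\Delta_0$ as a right $\Z[\Gamma_0]$-module, and the Manin relations descend to relations among them.

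Next, I would use the torsion-freeness hypothesis on $\Gamma_0$: it implies that $\sigma$ and $\tau$ act freely on $\Gamma_0\backslash\SL_2(\Z)$, so the 2-torsion relations pair up the $E_i$'s and the 3-torsion relations group them into triples. Using these relations, one can solve for half of the $\sigma$-paired generators and two-thirds of the $\tau$-tripled generators in terms of the others, leaving a reduced generating set $D_1,\ldots,D_t$ whose only remaining relation as a $\Z[\Gamma_0]$-module is a single ``parabolic relation'' coming from $T=\smatrix{1}{1}{0}{1}$, the stabilizer of $\{\infty\}$.

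Deriving the parabolic relation in the asserted form is the main computational step. Since $T\{\infty\}=\{\infty\}$ and $T\{0\}=\{1\}$, the identity $(T-1)(\{0\}-\{\infty\})=\{1\}-\{0\}$ holds in $\Delta_0$, reducing the question to expressing $\{1\}-\{0\}$ as a $\Z[\Gamma_0]$-combination of the $D_j$'s. The Manin trick (essentially a continued-fraction algorithm on the cusps involved) accomplishes this and produces explicit $\gamma_j\in\Gamma_0$ such that, modulo the torsion relations, $\{1\}-\{0\}=\sum_j(\gamma_j-1)D_j$. Applying any modular symbol $\phi$ and using the defining equivariance $\phi(\gamma D)=\phi(D)\big|\gamma^{-1}$ then translates this identity into the asserted difference equation.

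For the converse, given $v_1,\ldots,v_t$ satisfying \eqref{eqn:diff}, I would define $\phi$ on the $D_j$'s by $\phi(D_j)=v_j$ and extend $\Z[\Gamma_0]$-equivariantly. By the presentation established above, this extension is consistent on all of $\Delta_0$ precisely when the parabolic relation holds, which is exactly the hypothesis \eqref{eqn:diff}. The main obstacle is the explicit bookkeeping in the third step: enumerating cosets, tracking the $\sigma$- and $\tau$-orbits, and running the continued-fraction algorithm to pin down the precise $\gamma_j$'s in $\Gamma_0$. This is routine but fiddly, and is best carried out within the Manin-symbols formalism laid out in \cite{PS1}.
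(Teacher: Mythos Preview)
Your outline is the Manin-symbols approach, which is exactly what the paper defers to: its proof is simply ``See \cite[Corollary~2.7]{PS1}.'' One detail to watch, however: you place the $\gamma_j$ in $\Gamma_0$ and then derive the displayed identity by applying the $\Gamma_0$-equivariance of $\phi$ to a divisor relation of the shape $\sum_j(\gamma_j-1)D_j$. But as Remark~\ref{rmk:fund} (immediately following the proposition) records, in the \cite{PS1} construction the $\gamma_j$ together with the identity form a subset of right coset representatives for $\Gamma_0$ in $\SL_2(\Z)$---so generically $\gamma_j\notin\Gamma_0$, and the identity is \emph{not} obtained by pushing a $\Z[\Gamma_0]$-relation through $\phi$-equivariance. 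It comes instead from the side-pairing combinatorics of a fundamental domain for $\Gamma_0$, with the $\gamma_j$ acting on $V$ through whatever larger action $V$ carries (this is also why the statement says $\gamma_j\in\SL_2(\Z)$ rather than $\gamma_j\in\Gamma_0$). This does not undermine your overall plan---it is precisely the ``fiddly bookkeeping'' you acknowledge at the end---but getting the location of the $\gamma_j$ right is needed both for the forward identity and for the converse, where the well-definedness of $\phi$ hinges on which relations you are checking.
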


\begin{proof}
See \cite[Corollary 2.7]{PS1}.
\end{proof}

\begin{remark}
\label{rmk:fund}
In \cite{PS1} explicit algorithms are given to determine the $D_i$ and the $\gamma_i$.  This is the so-called process of ``solving the Manin relations".  In the end, the $\gamma_i$ together with the identity matrix form a subset of a full set of right coset representatives for $\Gamma_0$ in $\SL_2(\Z)$,  and the $D_i$ are $\Z[\Gamma]$-generators of $\Delta_0$.
\end{remark}

Proposition \ref{prop:msexist} gives us a strategy for explicitly writing down families of overconvergent modular symbols.  Just randomly pick elements $v_1, \dots, v_t$ in $\bDs$, and then try to solve equation \eqref{eqn:diff}.   

We will refer to equations of the form $w \big| \Delta = v$ as {\it difference equations}.  These equations were studied in detail in \cite[Section 4.2]{PS1} for the the module $\Doc$.  The following lemma generalizes the situation to $\Docs$.

\begin{lemma}
\label{lemma:diff}
Let $\Delta := \smatrix{1}{1}{0}{1} - 1$ denote the difference operator.  We have
\begin{enumerate}
\item the map $\Delta : \Docs \to \Docs$ is injective;
\item if $\mus \in \im(\Delta)$, then $\mus({\bf 1}) = 0$;
\item for $\mus \in \Docs$ with $\mus({\bf 1}) = 0$, there exists a unique $\nus \in \Docs$ such that $\nus \big| \Delta = \mus$.
\end{enumerate}
\end{lemma}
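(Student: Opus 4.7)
The plan is to reduce everything to an explicit computation on moments. The key observation is that the matrix $\smatrix{1}{1}{0}{1}$ has $a=1, c=0$, so by Lemma~\ref{lemma:Kprops}(1) the automorphy factor $\autv{1}{0}{m}(z,w) = 1$, and the $\sigop$-action degenerates to the bare weight-$0$ substitution $f(z)\mapsto f(1+z)$. Writing $a_j := \mus(z^j)\in\bAr$ and $b_j := \nus(z^j)\in\bAr$, the equation $\nus\big|\Delta = \mus$ becomes the triangular system
\[
a_j \;=\; \sum_{i=0}^{j-1}\binom{j}{i}\,b_i \qquad (j\ge 0).
\]

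From this the first two parts are essentially immediate. For (1), if $\nus\big|\Delta=0$ then $a_j=0$ for all $j$, and one inducts on $j$ using $a_{j+1} = (j{+}1)\,b_j + (\text{lower } b_i)$ to conclude $b_j=0$ for all $j$, so $\nus=0$. For (2), the $j=0$ case of the system reads $a_0=0$, i.e.\ $\mus({\bf 1})=0$.

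For (3), assume $\mus({\bf 1})=0$ and define $\nus$ by prescribing its moments through the recursion
\[
b_{j-1} \;=\; \frac{1}{j}\!\left(a_j - \sum_{i=0}^{j-2}\binom{j}{i}\,b_i\right)\;\in\;\bAr, \qquad j\ge 1.
\]
This produces a well-defined $\Qp$-linear map $\nus:\Qp[z]\to\bAr$, and by construction $\nus\big|\Delta=\mus$ on polynomials. The real content is to upgrade $\nus$ to an element of $\Docs=\Hom_{\cont}(\Aoc,\bAr)$, i.e.\ to show that for every $r>1$ the sup-norms of the $b_j$'s satisfy $|b_j| = O(r^j)$. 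I would mimic the single-variable strategy of \cite[Prop.~4.5]{PS1}, which transfers verbatim because $\bAr$ is a $\Qp$-Banach algebra and the recursion has coefficients in $\Qp$: fix $r>1$ and $M$ with $|a_j|\le M r^j$, and prove by induction, using $|\binom{j}{i}|_p\le 1$ and the ultrametric inequality, that $|b_j| \le M'\cdot(r')^j$ for some $r'$ slightly less than $r$ (and $r'>1$), with an appropriately chosen $M'$.

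The main obstacle is precisely this growth estimate: the recursion carries a $1/j$ whose $p$-adic norm can be as large as $j$, so naively one loses a factor of up to $j$ at each step. The point is that $|1/j|_p = p^{\ord_p(j)} \leq j$ only contributes a subexponential factor, which is absorbed by any slackening $r \mapsto r'$ with $1<r'<r$. Uniqueness of $\nus$ then follows from part (1). I expect no new difficulty arises from the family coefficients in $\bAr$: all the estimates are carried out with respect to the Banach-algebra norm and rest on the fact that the growth condition defining $\Docs$ is a pointwise-in-$w$ condition that is already controlled by the (uniform) sup-norm bounds.
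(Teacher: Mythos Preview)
Your reduction to moments is correct, and parts (1) and (2) are fine and match the paper's argument (the paper likewise invokes $\autv{1}{0}{m}=1$ for (2) and defers (1) to \cite[Lemma~4.3]{PS1}).

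For part (3) there is a genuine gap in your sketch. You cite \cite[Theorem~4.5]{PS1}, which is exactly what the paper defers to, but your description of the argument as an induction on the recursion
\[
b_{j-1}=\frac{1}{j}\Bigl(a_j-\sum_{i=0}^{j-2}\binom{j}{i}b_i\Bigr)
\]
does not close. Assuming $|b_i|\le M'(r')^i$ for $i\le j-2$ and $|a_j|\le M r^j$, the ultrametric bound gives only
\[
|b_{j-1}|\le |1/j|_p\cdot\max\bigl(Mr^j,\;M'(r')^{j-2}\bigr),
\]
and since $|1/j|_p$ is unbounded the second alternative forces $|1/j|_p\le r'$, which fails for large $j$. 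Unwinding the recursion naively accumulates a factor of $|1/r!|_p\sim p^{r/(p-1)}$, which is genuinely exponential and does \emph{not} place $\nus$ in $\bD[\sigma]$ for $\sigma$ close to $1$. (Relatedly, your inequality should read $r'>r$, not $r'<r$: solving the difference equation worsens growth, as the example $a_j\equiv 1$ already shows $b_{p-1}\notin\Zp$.)

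What \cite[Theorem~4.5]{PS1} actually does---and what the paper is pointing to---is write down the closed-form solution
\[
\nus(z^r)=\sum_{j=1}^{r+1}\frac{\mus(z^j)}{j}\binom{r}{j-1}B_{r+1-j},
\]
with $B_n$ the Bernoulli numbers. Now Clausen--von Staudt gives $|B_n|_p\le p$, and together with $|1/j|_p\le j\le r+1$ and $|\binom{r}{j-1}|_p\le 1$ one gets
\[
|\nus(z^r)|\le p\,(r+1)\max_{1\le j\le r+1}|\mus(z^j)|,
\]
from which $|\nus(z^r)|=O(\sigma^r)$ for every $\sigma>1$ follows by choosing $\rho$ with $1<\rho<\sigma$ in the bound $|\mus(z^j)|\le M_\rho\rho^j$. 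The Bernoulli identity is precisely what prevents the denominators from compounding. Once you use this formula, your remark that the passage to $\bAr$-valued moments is automatic is correct, and that is exactly the paper's one-line observation.
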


\begin{proof}
The first part follows verbatim as in \cite[Lemma 4.3]{PS1}.  The second part is clear as
$$
(\mu \big| \smatrix{1}{1}{0}{1})({\bf 1}) - \mu({\bf 1}) = \mu({\bf 1}) - \mu({\bf 1}) = 0
$$
since $\autv{1}{0}{m}(z,w) = 1$.  For the last part, just proceed as in \cite[Theorem 4.5]{PS1}.  Note that the newly constructed measure $\mu$ still takes values in $\bAr$.
\end{proof}

\begin{remark}\mbox{}
\begin{enumerate}
\item The explicit formulas for the solution of the difference equation given in \cite[Lemma 4.3]{PS1} apply equally well in the case of families.
\item
We note in the above lemma that if $\nus$ were in the smaller space $\bDs$, there is no reason for $\mus$ to again be in the $\bDs$ as denominators naturally appear in the solution of the difference equation.  These denominators are the primary reason for considering the space $\Docs$ in this paper.
\end{enumerate}
\end{remark}

Thus, to solve equation \eqref{eqn:diff} the only condition which we need to verify is that the right hand side has total measure zero.  However, for randomly chosen $v_i \in \Docs$, there is no reason for $\displaystyle \sum_{j=1}^t v_j\big| (\gamma_j-1)$ to have total measure zero.  Indeed, we compute
\begin{align*}
\left(\sum_{j=1}^t v_j\big| (\gamma_j-1)\right)({\bf 1})
&= 
\sum_{j=1}^t \left(v_j\big| \gamma_j \right)({\bf 1}) -  v_j({\bf 1}) \\
&= 
\sum_{j=1}^t \left(\left(\autv{a_j}{c_j}{m}(z,w) \cdot v_j\right) \big|_0 \gamma_j \right)({\bf 1}) -  v_j({\bf 1})\\
&= 
\sum_{j=1}^t  v_j \left(\autv{a_j}{c_j}{m}\right) -  v_j({\bf 1}),
\end{align*}
and see that the result is just some power series in $\bAr$.  We do note that by Lemma \ref{lemma:Kprops}, we have $\autv{a}{c}{0}(z,0) = 1$, and thus this power series specializes to 0 in weight 0 if $m \equiv 0 \mod{p-1}$, that is, this power series is divisible by $w$ if $m \equiv 0 \mod{p-1}$.  

In our quest to write down a family of overconvergent modular symbols, we have chosen the $v_j$ arbitrarily, and thus we still have a great deal of flexibility.  The following lemma explains precisely how to choose one of the $v_j$ more carefully to force the total measure of the right hand side of equation \eqref{eqn:diff} to vanish.  In the following lemma, $\mu_j$ denotes the distribution whose $j$-th moment is 1 and all of whose other moments vanish.

\begin{lemma}
\label{lemma:adjust}
Let $v'_1, \dots, v'_t$ be any elements of $\Docs$ and set 
$$
g = -\left(\sum_{j=1}^t v'_j\big| (\gamma_j-1)\right)({\bf 1}).
$$
If $m \equiv 0 \mod{p-1}$, fix any $i$ between 1 and $t$, and set
$$
v_i = \begin{cases}
v'_i & \text{if~} i \neq m \\
v'_i + \displaystyle \frac{g}{\frac{\partial}{\partial z} \autv{a_i}{c_i}{m}(z,w) \big|_{z=0}} \cdot \mu_1 & \text{if~} i = m 
\end{cases}.
$$
If $m \not\equiv 0 \mod{p-1}$, fix some $i$ between 1 and $t$ such that $a_i^m \not \equiv 1 \mod{p}$, and set
$$
v_i = \begin{cases}
v'_i & \text{if~} i \neq m \\
v'_i + \displaystyle \frac{g}{\autv{a_i}{c_i}{m}(0,w)-1} \cdot \mu_0 & \text{if~} i = m 
\end{cases}.
$$
Then, in either case, $v_j \in \Docs$ for all $j$, and  $\sum_{j=1}^t v_j\big| (\gamma_j-1)$ has total measure zero. 
\end{lemma}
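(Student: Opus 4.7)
The plan is to expand the total measure $\left(\sum_j v_j \big| (\gamma_j - 1)\right)({\bf 1})$ in a way that isolates the dependence on $v_i$ and then use the freedom in $v_i$ to cancel the error $-g$. Since $\gamma \cdot_0 {\bf 1} = {\bf 1}$ for any $\gamma \in \sigop$, the total measure unfolds as
$$
\left(\sum_j v_j \big| (\gamma_j - 1)\right)({\bf 1}) = \sum_j \left(v_j(\aut_{a_j, c_j, m}) - v_j({\bf 1})\right),
$$
so if $v_j = v'_j$ for $j \neq i$ and $v_i = v'_i + c \cdot \mu_k$ for some $c \in \bAr$ and $k \in \{0, 1\}$, the total measure shifts from $-g$ by exactly $c \cdot \left(\mu_k(\aut_{a_i, c_i, m}) - \mu_k({\bf 1})\right)$. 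I then solve for $c$ so that this shift equals $g$, picking $k$ to match each case of the lemma.

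For $m \equiv 0 \pmod{p-1}$, I would take $k = 1$. Here $\mu_1({\bf 1}) = 0$ and $\mu_1(\aut_{a_i, c_i, m})$ is the coefficient of $z$ in $\aut_{a_i, c_i, m}$, i.e.\ $\frac{\partial}{\partial z}\aut_{a_i, c_i, m}(z, w)\big|_{z=0}$, matching the lemma's formula. To verify $c \in \bAr$, I would first observe that by Lemma \ref{lemma:Kprops}(\ref{lemma:Kpropspart2}), $\aut_{a, c, m}(z, 0) = \omega(a)^m = 1$ in this range of $m$, so each summand $v'_j(\aut_{a_j, c_j, m}) - v'_j({\bf 1})$ vanishes at $w = 0$ and hence lies in $w \bAr$, forcing $g \in w \bAr$. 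By Lemma \ref{lemma:cS}, the denominator factors as $c_i \cdot w \cdot u$ with $u \in S_w^\times$, which is a unit in $\bAr$. Dividing, $c = (g/w)/(c_i \cdot u)$ lies in $\bAr$ (using $c_i^{-1} \in \Qp \subseteq \bAr$), so $c \cdot \mu_1 \in \Docs$.

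For $m \not\equiv 0 \pmod{p-1}$, I would take $k = 0$. Then $\mu_0(\aut_{a_i, c_i, m}) = \aut_{a_i, c_i, m}(0, w)$ and $\mu_0({\bf 1}) = 1$, again matching the lemma. Lemma \ref{lemma:Kprops}(\ref{lemma:Kpropspart2}) identifies the constant term in $w$ of $\aut_{a_i, c_i, m}(0, w) - 1$ as $\omega(a_i)^m - 1$, which by the hypothesis $a_i^m \not\equiv 1 \pmod p$ is a unit in $\Zp$. Hence $\aut_{a_i, c_i, m}(0, w) - 1$ is a power series in $w$ with unit constant term and is invertible in $\bAr$, so $c \in \bAr$ and $c \cdot \mu_0 \in \Docs$.

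In both cases $v_i \in \Docs$, and by construction the new total measure is $-g + g = 0$. The main obstacle is the arithmetic verifying $c \in \bAr$: it pivots on the two parts of Lemma \ref{lemma:Kprops}, which supply $w$-divisibility of $g$ in the trivial-tame-character case and a unit constant term in the nontrivial case, combined with the explicit factorization of $\frac{\partial}{\partial z}\aut|_{z=0}$ from Lemma \ref{lemma:cS}. It is precisely this dichotomy that dictates the case split and the special choice of index $i$ in the second case.
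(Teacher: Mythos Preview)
Your proposal is correct and follows essentially the same approach as the paper's proof: you unfold the total measure via $(\mus|\gamma)({\bf 1}) = \mus(\aut)$, identify the correction term as $c\cdot(\mu_k(\aut_{a_i,c_i,m})-\mu_k({\bf 1}))$, and then verify $c\in\bAr$ using Lemma~\ref{lemma:Kprops} and Lemma~\ref{lemma:cS} in the first case and the unit-constant-term argument in the second. The only point the paper makes explicit that you leave implicit is that $c_i\neq 0$ (so that $c_i^{-1}\in\Qp$), which follows from Remark~\ref{rmk:fund}; you should cite this.
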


\begin{proof}
We begin with the case $m \equiv 0 \mod{p-1}$.  We first justify that $v_i$ is in $\Docs$.  
That is, if  $h = \frac{\partial}{\partial z} \autv{a_i}{c_i}{m}(z,w) \big|_{z=0}$, we need to check that $g/h \in \bAr$.      Note that $h$ is simply the coefficient of $z$ in $\autv{a_i}{c_i}{m}(z,w)$ and is thus in $S_w$ by Theorem \ref{thm:autsSw}.  However, $h$ is not invertible in $S_w$.  Indeed, by Lemma \ref{lemma:Kprops} part (2), we have $h(0) = 0$.  Fortunately, $g$ also vanishes at $w = 0$ by the discussion immediately preceding this  lemma (this is where we are using the fact that $m \equiv 0 \mod{p-1}$).  Further, by Lemma \ref{lemma:cS}, $h/w$ is in $c_i \cdot S^\times$.  By Remark \ref{rmk:fund}, we have $c_i \neq 0$, and thus $h/w$ is invertible in $\bAr$.  Therefore, $g/h = (g/w) / (h/w)$ is in $\bAr$.

For the second part, we compute
\begin{align*}
\left( \sum_{j=1}^t v_j\big| (\gamma_j-1)\right)({\bf 1})
&=
\left( \sum_{j=1}^t v'_j\big| (\gamma_j-1)\right)({\bf 1})
+
\frac{g}{h} \cdot 
\left( \mu_1 \big| (\gamma_m-1) \right)({\bf 1}) \\
&=
-g + 
\frac{g}{h} \cdot \left( \left( \left( \autv{a_i}{c_i}{m}(z,w) \cdot \mu_1 \right) \big|_0 \gamma_i \right)({\bf 1}) - \mu_1({\bf 1})\right)
\\
&=
-g + 
\frac{g}{h} \cdot   \mu_1(\autv{a_i}{c_i}{m}(z,w))  
\\
&= 0
\end{align*}
as $\mu_1(\autv{a_i}{c_i}{m}(z,w))  = h$.

Now onto the case of $m \not \equiv 0 \mod{p-1}$.  We again justify that $v_i$ is in $\Docs$.  
That is, if $h = \autv{a_i}{c_i}{m}(0,w)$, we need to check that $g/(h-1) \in \bAr$.  By Theorem \ref{thm:autsSw}, $h-1$ is in $S_w$.    Further, by Lemma \ref{lemma:Kprops} part \eqref{lemma:Kpropspart2}, the constant term of $h-1$ is $\omega(a_i)^m-1$ which is a unit by assumption.  Thus, by Lemma \ref{lemma:Sinvert}, $h-1$ is invertible and thus $g/(h-1) \in \bAr$ as desired.

For the second part, we again compute
\begin{align*}
\left( \sum_{j=1}^t v_j\big| (\gamma_j-1)\right)({\bf 1})
&=
\left( \sum_{j=1}^t v'_j\big| (\gamma_j-1)\right)({\bf 1})
+
\frac{g}{h-1} \cdot 
\left( \mu_0 \big| (\gamma_m-1) \right)({\bf 1}) \\
&=
-g + 
\frac{g}{h-1} \cdot \left( \left( \left( \autv{a_i}{c_i}{m}(z,w) \cdot \mu_0 \right) \big|_0 \gamma_i \right)({\bf 1}) - \mu_0({\bf 1})\right)
\\
&=
-g + 
\frac{g}{h-1} \cdot   (\mu_0(\autv{a_i}{c_i}{m}(z,w))  - 1)
\\
&= 0
\end{align*}
as $\mu_0(\autv{a_i}{c_i}{m}(z,w)) = h$.
\end{proof}

\begin{remark}
In the case $m \not \equiv 0 \mod{p-1}$, if it happens that $a_i^m \equiv 1 \mod{p}$ for every $i$, then a simple computation shows that 
$\sum_{j=1}^t v_j \big| (\gamma_j - 1)$ has 0-th moment which vanishes in weight 0.  We could thus proceed as in the case of $m \equiv 0 \mod{p-1}$. We leave the details to the reader, but we note that in our computations we have never encountered this case.  Possibly this case never occurs or only occurs in very small level.
\end{remark}

\begin{cor}
Keeping the notation of Lemma \ref{lemma:adjust}, there exists $\Phis \in \MSo{\Docs}$ such that 
$$
\Phis(D_j) = v_j
$$
for each $j$.
\end{cor}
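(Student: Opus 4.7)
The plan is to reduce the corollary to Proposition~\ref{prop:msexist} applied to the right $\Gamma_0$-module $V = \Docs$, which requires producing a datum $v_\infty \in \Docs$ that, together with the given $v_1, \dots, v_t$, satisfies the difference equation~\eqref{eqn:diff}. Once such a $v_\infty$ is exhibited, the corollary is immediate from that proposition.

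First I would set
\[
\mus := \sum_{j=1}^t v_j \big| (\gamma_j - 1),
\]
and verify that $\mus \in \Docs$. This is clear because $\Docs$ is preserved under the $\sop$-action (and hence under the $\Gamma_0$-action) defined in Section~\ref{sec:distfam}, and each $v_j$ lies in $\Docs$ by Lemma~\ref{lemma:adjust}. The crucial property is that $\mus({\bf 1}) = 0$, but this is precisely the conclusion of Lemma~\ref{lemma:adjust}, which is exactly why that lemma was formulated as it was: the modified choice of $v_i$ absorbs the discrepancy in total measure.

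Next I would invoke part (3) of Lemma~\ref{lemma:diff}, which produces a unique $v_\infty \in \Docs$ such that $v_\infty \big| \Delta = \mus$. This step uses the explicit solution of the difference equation on $\Docs$ (note that we are genuinely in $\Docs$ rather than $\bDs$, which is why the overconvergent space was introduced).

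Finally, I would apply the converse direction of Proposition~\ref{prop:msexist} to the tuple $(v_1, \dots, v_t)$ together with the just-constructed $v_\infty$, yielding a unique $\Phis \in \MSo{\Docs}$ with $\Phis(D_j) = v_j$ for each $j$ (and incidentally $\Phis(\{0\}-\{\infty\}) = v_\infty$). There is no real obstacle; the work has been done in the preceding lemmas, and this corollary is essentially just the observation that those lemmas assemble precisely into the hypotheses of Proposition~\ref{prop:msexist}.
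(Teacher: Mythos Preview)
Your proposal is correct and follows exactly the same route as the paper: use Lemma~\ref{lemma:adjust} to ensure the right-hand side of \eqref{eqn:diff} has total measure zero, apply Lemma~\ref{lemma:diff}(3) to solve the difference equation for $v_\infty \in \Docs$, and then invoke the converse part of Proposition~\ref{prop:msexist}. The paper's proof is just a terser version of what you wrote.
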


\begin{proof}
By Lemma \ref{lemma:adjust} and Lemma \ref{lemma:diff}, there exists $v_\infty \in \Docs$ such that 
$$
v_\infty \big| \Delta = \sum_{j=1}^t v_j\big| (\gamma_j-1).
$$
Thus, by Proposition \ref{prop:msexist}, there exists $\Phis \in \MSo{\Docs}$ such that 
$\Phis(\{0\}-\{\infty\}) = v_\infty$, and 
$$
\Phis(D_j) = v_j
$$
for each $j$ as desired.
\end{proof}

\begin{remark}
The assumption that $\Gamma_0$ be torsion-free is not at all essential.  In \cite[Section 2.5]{PS1}, there is a discussion on how to deal with torsion elements in constructing modular symbols.  Further, the arguments of Lemma \ref{lemma:adjust} carry through to this case with just minor changes.
\end{remark}

\subsection{Ordinary families of overconvergent modular symbols}
\label{sec:ordfam}

As Hida families are the primary object of interest in this paper, we now describe how to pass to the {\it ordinary subspace} of our spaces of modular symbols.  To this end, recall that if $X$ is a compact $\Zp$-module equipped with a compact operator $U_p$, we define the ordinary subspace $X^{\ord} := \bigcap_n U_p^n X$.  Then $X^{\ord}$ is the largest subspace of $X$ on which $U_p$ acts invertibly.  If moreover $X$ is profinite, then there is a canonical decomposition $X = X^{\ord} \oplus X^{\nil}$ where $X^{\nil}$ is the subspace of $X$ on which $U_p$ acts topologically nilpotently (see \cite[Proposition 2.3]{GS}).  Moreover, projection onto $X^{\ord}$ is given by the operator $e:=\ds\lim_{n\rightarrow\infty} U_p^{n!}$.

Unfortunately, $\MSo{\bDso}$ is not a profinite space since the Tate algebra $\bAro$ is not profinite, and thus it is not {\it a priori} clear that $\MSo{\bDso}$ admits its ordinary subspace as a direct summand.   However, $\bAro = \Zp\tate{w}$ is contained in $\bArc :=\Zp[[w]]$ which is a profinite ring.  Moreover, viewing $\bArc$ as the ring of bounded functions on the open disc of radius $1/p$ contained in $\bAro$, we see that this ring is preserved by the action of $\sigop$, and thus we get a Hecke-equivariant inclusion
$$
\MSo{\bDso} \subseteq \MSo{\bDsc}.
$$
Further, we obtain a direct sum decomposition into ordinary and non-ordinary parts: 
$$
\MSo{\bDsc} \cong \MSo{\bDsc}^{\ord} \oplus \MSo{\bDsc}^{\nil}.
$$
In what follows, the space $\MSo{\bDsc}^{\ord}$ will be our primary object of interest.  To ease notation we will denote this space simply by $\MSord$.

We note that in the ordinary case no Hecke information should be lost by working on  this open disc of radius $1/p$.  Indeed, as Hida families extend to all of weight space, one expects that the Hecke-eigenvalues of ordinary families of modular symbols should do the same. This fact is stated in the following theorem and proven in Appendix~\ref{app:GS}.

\begin{thm}
\label{thm:extend}
We have
\begin{enumerate}
\item $\MSord$ is a free $\bArc$-module and
$$
\rank_\Lambda(\MSord) = 
\rank_{\Zp}(\MSo{\Sym^k(\Zp^2)}^{\ord}) 
$$
for any $k \equiv m \mod{p-1}$,
\item for $T$ any Hecke operator, 
$$
\chr(T ~|~ \MSord)
$$ 
has coefficients in $\Zp[[W]] = \Zp[[pw]]$; that is, the coefficients of this characteristic polynomial extend to the open unit disc.
\end{enumerate}
\end{thm}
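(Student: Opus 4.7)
For part (1), the plan is to first establish finite generation of $\MSord$ over $\bArc$ and then promote this to freeness via specialization. Since $U_p$ is a compact operator on $\MSo{\bDsc}$, the ordinary projector $e=\lim_n U_p^{n!}$ exists, giving a Hecke-equivariant decomposition $\MSo{\bDsc}=\MSord\oplus\MSo{\bDsc}^{\nil}$. I would deduce finite generation of $\MSord$ over $\bArc$ by working modulo an open ideal of $\bArc$ (e.g.\ $(p,w^n)$), where the resulting quotient of $\MSo{\bDsc}$ is a finitely generated $\Zp/p^a$-module on which $U_p$ acts in the classical compact sense, and then lifting back by the Mittag--Leffler condition on the projective system.

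For freeness, the key input is a control/specialization theorem (an analogue for families of Stevens' control theorem for $\MSo{\bD_k}$): for each classical weight $k\in\Z_{\geq0}$ with $k\equiv m\mod{p-1}$, corresponding to a maximal ideal $\m_k\subset\bArc$, the specialization map
\[ \MSord/\m_k\MSord \;\longrightarrow\; \MSo{\Sym^k(\Zp^2)}^{\ord}
\]
is an isomorphism. Combined with Hida's classical theorem that $\rank_{\Zp}\MSo{\Sym^k(\Zp^2)}^{\ord}$ is constant for $k\equiv m\mod{p-1}$, this constant fiber dimension over the dense set of classical weights in $\Wr$, together with the fact that $\bArc=\Zp[[w]]$ is a two-dimensional regular local ring, is enough (via Nakayama and the Auslander--Buchsbaum formula, or a direct duality argument on the compact $\bArc$-module $\MSord$) to conclude that $\MSord$ is a free $\bArc$-module of the claimed rank.

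For part (2), once $\MSord$ is free, $\chr(T\mid\MSord)$ is a well-defined element of $\bArc[X]$. The claim that its coefficients lie in the subring $\Zp[[W]]=\Zp[[pw]]$ is genuinely stronger than integrality on the closed disc $|w|\leq1$: it says each coefficient, as a function, extends as a bounded analytic function to the larger region covering all of $\W_m$. A density/interpolation argument through classical weights alone is therefore insufficient, because classical weights lie in $\Wr$ and are only dense in the closed disc, not in $\W_m$. My plan is to compare $\MSord$ with Hida's classical $\Lambda$-adic construction $H^{\ord}=e\cdot\varprojlim_n H^1_c(\Gamma_1(Np^n),\Zp)^{\omega^m}$, which is naturally a free $\Lambda=\Zp[[W]]$-module and on which $T$ has characteristic polynomial in $\Lambda[X]$. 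The desired conclusion will follow from a Hecke-equivariant comparison isomorphism $H^{\ord}\otimes_\Lambda\bArc\cong\MSord$, which I would build by identifying both sides via their specializations at classical weights in $\Wr$ (which agree by the Stevens control theorem on the right and Hida's classicality theorem on the left) together with $\bArc$-linearity and the rigidity that freeness of both modules provides.

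\textbf{Main obstacle.} The hardest step is the comparison between the families-of-OMS construction over $\bArc$ and Hida's limit-of-cohomology construction over $\Lambda$: one has to match the two $\sop$-actions (including normalizations of the automorphy factor $\aut(z,w)$ against Hida's diamond operators) and verify that the abstractly-constructed $\bArc$-linear map is surjective as well as injective, which ultimately reduces to a careful comparison of specializations at a dense subset of classical weights in $\Wr$ combined with faithful flatness of $\bArc$ over $\Lambda$. An alternative route---redoing the construction of $\MSord$ for discs of radius $1/p$ centered at arbitrary points of $\W_m$ and gluing the resulting characteristic polynomials across overlaps using specialization at classical weights in each subdisc---would shift the same difficulty onto verifying the compatibility of the $\sop$-action as the center of the disc is varied.
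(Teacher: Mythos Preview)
Your proposal is correct in outline and would yield a valid proof, but the paper takes a somewhat different route for part~(2). Rather than comparing $\MSord$ to Hida's tower $e\cdot\varprojlim_n H^1_c(\Gamma_1(Np^n),\Zp)$, the paper compares it to the Greenberg--Stevens space $\MSo{\M(\Zpx\times\Zp)}^{\ord}$ of modular symbols valued in measures on $\Zpx\times\Zp$, which is naturally a module over $\tLambda=\Zp[[\Zpx]]$ and hence over $\Zp[[W]]$ on each component of weight space. The advantage is that the comparison map is built from an explicit $\sigop$-equivariant map $\alpha:\M(\Zpx\times\Zp)\to\bD(\bArc)$ at the level of coefficient modules, sending a measure $\mu$ to the family $\kappa\mapsto\mu_\kappa$ of its weight-$\kappa$ specializations; one never leaves the modular-symbols framework, and your ``main obstacle'' of matching normalizations against Hida's diamond operators on the cohomology tower is sidestepped entirely.

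A second difference is that the paper does not use density of classical weights anywhere. A short Nakayama-type lemma (for a map $X\to Y$ of $\bArc$-modules with $Y$ free, an isomorphism modulo a single height-one prime $\p_k$ forces an isomorphism) reduces both freeness in part~(1) and the comparison isomorphism in part~(2) to a control theorem at one classical weight. The paper picks the unique $0\le k\le p-2$ with $k\equiv m\pmod{p-1}$, so that the two natural integral lattices in $\Sym^k(\Qp^2)$ arising on the two sides coincide on the nose. Your Auslander--Buchsbaum/constant-fiber-dimension argument for freeness would also work, but is heavier than what is needed.
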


\subsection{Vector of total measures}
\label{sec:vtm}

In this section, we make the following simple but extremely helpful observation:\ an element of   $\MSord$ is completely determined by the total measures of all of its values.  Moreover, since a modular symbol is determined by its values on finitely many divisors, one only needs finitely many of these total measures to determine the symbol.  We can thus express any element of $\MSord$ as a vector with coordinates in $\bArc$, and thus reduce many computations with ordinary families of overconvergent modular symbols to computations in a free module over $\bArc$.

More precisely, choose $D_1, \dots, D_t \in \Delta_0$ which generate $\Delta_0$ as a $\Zp[\Gamma_0]$-module (see Remark \ref{rmk:fund}).  We then define the {\it vector of total measures} map
$$
\alpha : \MSord \lra \bArc^t 
$$
defined by sending $\Phi$ to the vector $\left( \Phi(D_i)({\bf 1}) \right)_{i=1}^t$.  

We note that this construction works equally well for a fixed weight, thus expressing an overconvergent modular symbol as an element of $\Zp^t$; that is, setting $\MSkord := \MSo{\bDo_k}^{\ord}$, we then have a map
$$
\alpha_k : \MSkord \lra \Zp^t 
$$
defined exactly as above.

\begin{prop}
\label{prop:vtm}
We have
\begin{enumerate}
\item \label{item:1}
the map $\alpha$ is injective;

\item \label{item:2} the map $\alpha_k$ is injective;

\item \label{item:3} the induced map 
$$
\overline{\alpha} : \MSord  \otimes \Lambda/ \m {\lra} (\Lambda/\m)^t \cong \Fp^t
$$
is injective.  Here $\m$ is the maximal ideal of $\Lambda=\Zp[[w]]$.
\end{enumerate}
\end{prop}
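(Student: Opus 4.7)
The plan is to prove the three statements in the order (2), (3), (1). Part (2) contains the technical heart: the $U_p$-invertibility on the ordinary subspace together with the $\Z[\Gamma_0]$-module structure of $\Delta_0$ force enough linear relations among the moments of $\Phi(D_i)$ that the vanishing of total measures propagates to all moments. Part (3) is the mod-$\m$ analog of (2), and part (1) then follows from (3) by Nakayama's lemma applied to the free $\bArc$-module $\MSord$ (Theorem~\ref{thm:extend}).

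For (2), suppose $\Phi \in \MSkord$ has $\alpha_k(\Phi) = 0$, so $\Phi(D_i)({\bf 1}) = 0$ for each $i$. By Proposition~\ref{prop:msexist}, it suffices to show each $\Phi(D_i) \in \bD^0_k$ is zero, i.e.\ all moments $\Phi(D_i)(z^j)$ vanish. The basic mechanism is: for $\gamma = \smallmat \in \Gamma_0$, one has $(\Phi(D_i)|\gamma)({\bf 1}) = \Phi(D_i)((a+cz)^k) = \sum_{j=0}^{k} \binom{k}{j} a^{k-j} c^j \Phi(D_i)(z^j)$, coupling higher moments into total measures with $p$-divisible weights since $c \in p\Zp$. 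Applying $U_p^n$ --- whose coset representatives $\smatrix{1}{a}{0}{p^n}$ satisfy $\autv{1}{0}{m} \equiv 1$ by Lemma~\ref{lemma:Kprops}, so they preserve total measures --- and expanding each refined divisor $\smatrix{1}{a}{0}{p^n} D_i$ as a $\Z[\Gamma_0]$-combination $\sum_\ell n_\ell \gamma_\ell D_{j_\ell}$, one finds that $\alpha_k(U_p^n \Phi)_i$ is an explicit linear combination of moments $\Phi(D_{j_\ell})(z^j)$. Since the $j=0$ contributions vanish by hypothesis, these identities constrain the higher moments, and the $U_p$-invertibility on $\MSkord$ supplies the quantitative control needed to conclude that all moments must vanish.

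For (3), the same argument runs modulo $\m = (p,w) \subset \bArc$: here $\bDsc/\m \cong \bD^0/p$ carries the weight action specialized to $\omega^m$, and $(\mu|\gamma)({\bf 1}) \equiv a^m \mu({\bf 1}) \mod{p}$ for $\gamma = \smallmat \in \Gamma_0$, so the same cascade of relations goes through in $\Fp$. For (1), the freeness of $\MSord$ over the local ring $\bArc$ means $\alpha$ is represented by a $t \times r$ matrix; part (3) says this matrix has full column rank mod $\m$, so some $r \times r$ minor is a unit in $\bArc$ (being a unit modulo the maximal ideal), and hence $\alpha$ is injective. The main obstacle is step (2): the iteration with $U_p^n$ produces infinitely many linear constraints, and one must argue rigorously that these suffice. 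I expect the cleanest route is through the control theorem \cite[Theorem~7.1]{PS2}, reducing (2) to the parallel statement for classical ordinary modular symbols in $\MSo{\Sym^k(\Zp^2)}^{\ord}$, where the rank equality in Theorem~\ref{thm:extend} supplies the necessary dimension bound $r \leq t$.
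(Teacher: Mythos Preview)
Your reduction of (1) to (3) via Nakayama and the freeness of $\MSord$ is correct and matches the paper. However, your argument for (2) has a genuine gap: you describe a mechanism of ``linear constraints on higher moments'' coming from $U_p$-iteration but do not carry it out, and your fallback suggestion (the control theorem plus the rank bound $r\le t$) does not by itself show that the \emph{specific} map $\alpha_k$ is injective --- knowing that an $r$-dimensional space sits inside some $\Zp^t$ says nothing about whether a \emph{particular} linear map is injective.

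The paper's argument is cleaner and runs in the opposite logical order: it reduces both (1) and (2) to (3), and proves (3) by a single norm-dropping computation. The key observation you are missing is this. Suppose $\Phi_k\in\MSkord$ has $\|\Phi_k\|=1$ but $\Phi_k(D_i)({\bf 1})\equiv 0\pmod p$ for each $i$. Since the $D_i$ generate $\Delta_0$ over $\Z[\Gamma_0]$ and $(\mu|_k\gamma)({\bf 1})=\mu((a+cz)^k)\equiv a^k\mu({\bf 1})\pmod p$ for $\gamma\in\Gamma_0$, one gets $\Phi_k(D)({\bf 1})\equiv 0\pmod p$ for \emph{every} $D\in\Delta_0$. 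Then for any $D$ and any $j\ge 0$,
\[
(\Phi_k\big|U_p)(D)(z^j)=\sum_{a=0}^{p-1}\Phi_k\bigl(\psmallmat{1}{a}{0}{p}D\bigr)\bigl((a+pz)^j\bigr)\equiv\sum_{a=0}^{p-1}a^j\,\Phi_k\bigl(\psmallmat{1}{a}{0}{p}D\bigr)({\bf 1})\equiv 0\pmod p,
\]
so $\|\Phi_k|U_p\|<1$. But ordinarity means $\Phi_k=\lim_n\Phi_k|U_p^{n!}$, forcing $\|\Phi_k|U_p\|=\|\Phi_k\|=1$, a contradiction. This proves (3) (equivalently: if all $\Phi_k(D_i)({\bf 1})\equiv 0\pmod p$ then $p\mid\Phi_k$), and (2) follows by the same Nakayama reasoning or by iterating the divisibility. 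You had all the ingredients --- the $U_p$-action, the $p$-divisibility of $c$, the invertibility of $U_p$ on the ordinary part --- but framed them as ``extracting linear constraints'' rather than the one-line observation $(a+pz)^j\equiv a^j\pmod p$, which collapses the whole argument.
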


\begin{proof}
We first note that part \eqref{item:3} implies part \eqref{item:1}.  Indeed, if $K$ is the kernel of $\alpha$, then part \eqref{item:3} implies that $K \otimes \Lambda / \m = 0$ and thus $K=0$.  Similarly, part \eqref{item:3} implies part \eqref{item:2}.  Indeed, by Lemma \ref{lemma:compOMS}, 
$$
\MSord \otimes (\bArc / \p_k ) \cong \MSkord,
$$
and reducing the map $\alpha$ modulo $\p_k$ yields the map $\alpha_k$.  Moreover, $\m = \p_k + p\Lambda$.  Thus, if $K$ is now the kernel of $\alpha_k$, by part \eqref{item:3}, $K \otimes \Lambda/\m = 0$, and thus $K=0$.

So it suffices to prove part \eqref{item:3}.  In fact, from the observations above, it suffices to see that 
$$
\MSkord \otimes \Fp \lra \Fp^t
$$
is injective.  That is, it suffices to see that if $\Phi_k \in \MSkord$ with $\Phi_k(D_i)({\bf 1})$ divisible by $p$ for each $i$, then $\Phi_k$ is divisible by $p$.  
Seeking a contradiction, assume that $||\Phi_k|| = 1$.  Since the $D_i$ generate $\Delta_0$, we see that $\Phi_k(D)({\bf 1})$ is divisible by $p$ for every $D \in \Delta_0$.    But then
\begin{align*}
(\Phi_k \big| U_p)(D)(z^j) 
=
\sum_{a=0}^{p-1} (\Phi_k (\psmallmat{1}{a}{0}{p} D) \big| \psmallmat{1}{a}{0}{p})(z^j)
=
\sum_{a=0}^{p-1} \Phi_k (\psmallmat{1}{a}{0}{p} D)((a+pz)^j)
\end{align*}
which is divisible by $p$ since $\Phi_k (\psmallmat{1}{a}{0}{p} D)({\bf 1})$ is divisible by $p$ by assumption.  Thus, $|| \Phi_k | U_p || < 1$.  But since $\Phi_k$ is in the ordinary subspace, we have $\Phi_k = \lim_n \Phi_k | U_p^{n!}$ which implies $|| \Phi_k | U_p || = || \Phi_k || =1 $.  This  contradiction establishes  part \eqref{item:3} and completes the proof.
\end{proof}

Here's one example of the usefulness of these vector of total measure maps.

\begin{cor}
\label{cor:extend}
We have that $\left\{ \Phi_1^{\ord}, \dots, \Phi_j^{\ord} \right\}$ can be completed to a $\bArc$-basis of $\MSord$ if and only if $\left\{ \ol{\alpha}(\Phi_1^{\ord}), \dots, \ol{\alpha}(\Phi_j^{\ord}) \right\}$ is a linearly independent set in $\Fp^t$.
\end{cor}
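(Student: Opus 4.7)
The plan is to reduce the problem to a standard statement about free modules over local rings, using Proposition~\ref{prop:vtm}\eqref{item:3} to translate between linear independence in $\MSord/\m\MSord$ and linear independence of the vectors $\ol{\alpha}(\Phi_i^{\ord})$ in $\Fp^t$.

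First, I would recall two inputs: (i) by Theorem~\ref{thm:extend}, $\MSord$ is a free $\bArc$-module, and (ii) the ring $\bArc = \Zp[[w]]$ is a (complete Noetherian) local ring with maximal ideal $\m$ and residue field $\bArc/\m \cong \Fp$. These together put us in the standard setup where Nakayama's lemma applies: a finite set of elements of a free module $M$ over a local ring $(\Lambda,\m)$ can be completed to a $\Lambda$-basis of $M$ if and only if their images in $M/\m M$ form part of an $\Fp$-basis, which for a finite linearly independent set is equivalent to saying that their images in $M/\m M$ are $\Fp$-linearly independent. (Concretely: lift a basis of the quotient to $M$; by Nakayama these lifts generate $M$, and since $M$ is free of rank equal to $\dim_{\Fp} M/\m M$, they form a basis.)

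Next, I would apply this to $M = \MSord$ and the elements $\Phi_1^{\ord}, \dots, \Phi_j^{\ord}$. The Nakayama criterion says these can be completed to a $\bArc$-basis of $\MSord$ if and only if their images $\ol{\Phi}_i^{\ord}$ in $\MSord \otimes_{\bArc} \Lambda/\m$ are $\Fp$-linearly independent.

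Finally, I would invoke Proposition~\ref{prop:vtm}\eqref{item:3}: the induced map
\[
\ol{\alpha}:\MSord \otimes \Lambda/\m \lra \Fp^t
\]
is \emph{injective}. Since $\ol{\alpha}$ is also $\Fp$-linear, it preserves and reflects linear independence, so $\{\ol{\Phi}_1^{\ord},\dots,\ol{\Phi}_j^{\ord}\}$ is linearly independent in $\MSord/\m \MSord$ if and only if $\{\ol{\alpha}(\Phi_1^{\ord}),\dots,\ol{\alpha}(\Phi_j^{\ord})\}$ is linearly independent in $\Fp^t$. Combining the two equivalences yields the corollary.

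The only real content is the injectivity of $\ol{\alpha}$, which was already established in Proposition~\ref{prop:vtm}; the rest is a clean application of Nakayama's lemma to a free module over a local ring, so I do not expect any serious obstacle.
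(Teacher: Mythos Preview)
Your proposal is correct and follows essentially the same approach as the paper: use the injectivity of $\ol{\alpha}$ from Proposition~\ref{prop:vtm}\eqref{item:3} to transfer linear independence between $\MSord/\m\MSord$ and $\Fp^t$, then apply Nakayama's lemma (the paper phrases it as a ``compact version of Nakayama's lemma'') to relate this to extendability to a $\Lambda$-basis.
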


\begin{proof}
By Proposition \ref{prop:vtm}, $\ol{\alpha}(\Phi_1^{\ord}), \dots, \ol{\alpha}(\Phi_j^{\ord})$ are linearly independent in $\Fp^t$ if and only if the images of $\Phi_1^{\ord}, \dots, \Phi_j^{\ord}$ in $\MSord/\m \MSord$ are linearly independent.  By a compact version of Nakayama's lemma, this is true if and only if 
$\Phi_1^{\ord}, \dots, \Phi_j^{\ord}$ is the start of a $\Lambda$-basis of $X$.
\end{proof}

\subsection{Bases of $\MSord$ and characteristic polynomials of Hecke operators}
\label{sec:charpolyhecke}

We present here a method of computing the characteristic polynomials of Hecke operators acting on the ordinary subspace of $\MSord$.  

We begin by describing a naive idea of how one can form a $\bArc$-basis of $\MSord$.  We first note that we  can assume that we know the $\bArc$-rank of $\MSord$ as Theorem \ref{thm:extend} expresses this rank in terms of the $\Zp$-rank of some classical space of modular symbols which by standard methods is readily computed.\footnote{\label{footnote:hida}The $\Zp$-rank of $\MSo{\Sym^k(\Zp^2)}^{\ord}$ is simply given by the number of non-zero roots of the characteristic polynomial of $U_p$ acting on $\MSo{\Sym^k(\Fp^2)}$.}
Let us assume then that we have in hand elements $\Phi_1^{\ord}, \dots, \Phi_j^{\ord}$ in $\MSord$ which are the start of a $\bArc$-basis of $\MSord$.  We now describe how to extend this set to a full $\bArc$-basis by working one element at a time.

To this end, produce some ``random" element $\Phi$ of $\MSo{\bDsc}$---for instance, using the methods described in section \ref{sec:random}.  Then, as described in section \ref{sec:ordfam}, by iterating $U_p$ we can form $\Phi^{\ord}$, the projection of $\Phi$ onto $\MSord$.  If $\Phi_1^{\ord}, \dots, \Phi_j^{\ord}$ together with $\Phi^{\ord}$ still form the  beginning of some $\bArc$-basis of $\MSord$ (which we can test via Corollary \ref{cor:extend}), we have succeeded in extending our partial basis.  Otherwise, we produce another ``random" symbol $\Phi$ and continue repeating this process.  As long as our method of producing such symbols is sufficiently random, we will eventually find a symbol $\Phi^{\ord}$ which extends our partial basis.

Now, with a $\bArc$-basis of $\MSord$ in hand, we next want to compute the characteristic polynomials of Hecke operators acting on $\MSord$.  To do this, we simply need to write down the associated matrix of any Hecke operators with respect to this basis.  To this end, for $T$ a Hecke operator, we must be able to write
$$
\Phi^{\ord}_i \big| T= \sum_j a_{ij}(w) \Phi^{\ord}_j
$$
with $a_{ij}(w) \in \bArc$.  To find the power series $a_{ij}(w)$ which solve these equations, one can use the vector of total measures described in section \ref{sec:vtm}.  Indeed, it suffices to solve
$$
\alpha(\Phi^{\ord}_i \big| T)= \sum_j a_{ij}(w) \alpha(\Phi^{\ord}_j)
$$
which is now a system of linear equations over $\Lambda$.

We note that even though the matrix associated to $T$ will be defined over $\Lambda$, by Theorem \ref{thm:extend}, the characteristic polynomials of these matrices will lie in $\Zp[[pw]]$, and thus extend to all of weight space.

Lastly we mention that the above method works equally well for the plus and minus subspace $(\MSord)^\pm$ by simply passing to the $\pm$-parts of the random symbols produced.

\subsection{Restricting to collections of congruent forms}
\label{sec:decomp}

The methods of the previous section describe how to form a basis of $\MSord$ and how to compute the Hecke action on this basis.  In the special case when the dimension of $(\MSord)^\pm$ is 1, our single basis element is then an eigensymbol, and thus immediately contains the information of families of Hecke eigenvalues.  However, it is extremely rare for $(\MSord)^\pm$ to be one-dimensional; this only happens for small primes and small tame level.  To partially circumvent this problem, we now  describe a decomposition of $\MSord$ into Hecke stable subspaces, comprising of congruent families, and it is not at all uncommon for pieces of this decomposition to be 1-dimensional.

Let $\T$ denote the Hecke algebra over $\Lambda$ acting on $\MSord$.  
The ring $\T$ is a semi-local ring with $\T \simeq \oplus_\m \T_{\m}$ where $\m$ varies over the maximal ideals of $\T$.  
This isomorphism induces a Hecke-equivariant isomorphism $\MSord \simeq \oplus_{\m} \MSord_{\m}$.  We now describe how to compute the characteristic polynomials of Hecke operators acting on $\MSord_{\m}$ for each individual maximal ideal $\m$.

Fix a prime $\ell$ and let $T$ denote either $T_\ell$ or $U_\ell$ depending on whether or not $\ell$ divides $Np$.  Set $\overline{f}_{\m,\ell}$ equal to the characteristic polynomial of $T$ acting on $\MSord/\m \MSord$ which is a polynomial defined over $\Fp$.\footnote{We note that this polynomial also arises as the characteristic polynomial of $T$ acting on the space of $p$-ordinary modular symbols of weight $k$ defined over $\Fp$ for any $k \equiv m \pmod{p-1}$ and is thus readily computed.}  For a fixed $\m$, one can find a prime $\ell$ so that any lift ${f}_{\m,\ell}(T)$ of $\overline{f}_{\m,\ell}(T)$ to characteristic 0 acts topologically nilpotently on $\MSord_{\m}$ and invertibly on $\MSord_{\m'}$ for all $\m' \neq \m$.  

Now, to form a basis of $\MSord_\m$, we can simply follow the method of \S\ref{sec:charpolyhecke} as long as we can produce sufficiently random symbols in $\MSord_\m$.  To do this, we form a random symbol $\Phi \in \MSord$ and then iterate the Hecke operator $\prod_{\m' \neq \m} f_{\m',\ell}(T)$ which results in projecting $\Phi$ to the subspace $\MSord_\m$, as desired.

Again, we mention that this method also works to produce a basis of $(\MSord_{\m})^\pm$.

\section{Explicit computations with families of OMSs}

In section \ref{sec:OMSs}, we described methods of computing with ordinary families of overconvergent modular symbols.  However, this discussion was all carried out on a theoretical level as a single $\Phi \in \MSo{\bDsc}$ is determined by an infinite amount of information.  In order to compute with these families in practice, one must have a systematic method of approximating each $\Phi$ with a finite amount of data.  Moreover, such approximations must be respected by the Hecke operators.  In what follows, we describe our method of approximating families of overconvergent modular symbols.  Further, we verify that the methods we described in the previous section still carry through with our approximated families.

\subsection{Finite approximation modules in families}

We begin by reviewing the methods of  \cite{PS1} where a systematic method of approximating elements of $\bD_k$ was given which was compatible with the $\sigop$-action.  These approximations allowed for explicit computations to be carried out in the space $\MSo{\bD_k}$.

In forming an approximation of a distribution $\mu$ in $\bDo_k$, we note that the naive method of considering the first $M$ moments of $\mu$ each modulo $p^M$  is  not stable under the matrix action on $\bD_k$.  Instead, in \cite{PS1}, a $\sigop$-stable filtration on $\bDo_k$ was introduced:
$$
\Fil^M(\bDo_k) = \{ \mu \in \bDo ~:~  \ord_p(\mu(z^j)) \geq M-j \text{~for~} 0 \leq j \leq M\},
$$
and thus one can approximate $\mu \in \bDo_k$ by looking at its image in the finite set $\F_k(M) := \bDo_k / \Fil^M(\bDo_k)$.  Explicitly, one is then  approximating a distribution $\mu \in \bDo_k$ by considering its $j$-th moment modulo $p^{M-j}$ for $0 \leq j \leq M$.
For this reason, we refer to the $\sigop$-stable space 
$\F_k(M)$ as a {\it finite approximation module}.  The space $\MSo{\F_k(M)}$ is thus a natural space to work in to approximate overconvergent modular symbols.

We seek to generalize this construction to the case of families; that is, we seek a $\sigop$-stable filtration on $\bDsc$.  One could hope  to define a nice filtration on $\bDsc$ by simply extending the above filtration on $\bD^0$ by $\bArc$-linearly.  
However, this filtration is not preserved by the $\sigop$-action defined in section \ref{sec:distfam}.  Indeed, the $\sigop$-action on $\bDsc$ is defined by combining the $\bA$-action on $\bD$ with a weight 0 action.  However, $\Fil^M(\bDo)$ is not preserved under the $\bA$-action.  For instance, multiplication by the element $z$ maps  $\Fil^M(\bDo)$ into $\Fil^{M-1}(\bDo)$.  

We do note however that the subring $\Zp\llbracket pz\rrbracket \subseteq \bA$ does preserve $\Fil^M(\bDo)$---this is immediate from the above definition of $\Fil^M(\bDo)$ as multiplication by $z$ simply shifts the moments of a distribution down by one.  Moreover, the $\sigop$-action on $\bDsc$ does not act through arbitrary elements of $\bA$; rather, we are only acting by power series of the form
$$
\aut(z,w) = \omega(a)^m \cdot 
\sum_{n=0}^\infty p^n \binom{\log_\gamma(\frac{a+cz}{\omega(a)})}{n} w^n,
$$
and thus the only power series in $z$ we need to act by are of the form 
\begin{equation}
\label{eqn:actingelements}
p^n \binom{\log_\gamma(1+pa+pbz)}{n}.
\end{equation}
But even for $n=1$, these power series need not be in $\Zp\llbracket pz\rrbracket$.  For instance,
$$
p \log_\gamma(1+pz) = \frac{p}{\log p} \left( pz - \frac{p^2 z^2}{2} + \dots + (-1)^p p^{p-1} z^p + \dots \right);
$$
note the troubling term is $p^{p-1} z^p$.  

We now turn to Theorem \ref{thm:autsSz} to see how far these power series are from being in $\Zp[[pw]]$.  Indeed, this theorem tells us that such power series are in $S_z$, and thus their $j$-th coefficients have valuation at least $c_p j$.

We are thus led to modify our filtration at any fixed weight as follows.  Set
$$
\wt{\Fil}^M(\bDo) = \{ \mu \in \bDo ~:~  \ord_p(\mu(z^j)) \geq  M- j \cdot c_p \text{~for~all~} j \geq 0\}.
$$
In $\Fil^M(\bDo)$, the sequence of lower bounds on the valuations of the moments was
$$
M, M-1, \dots, 2, 1.
$$
In $\wt{\Fil}^M(\bDo)$, the corresponding sequence begins as
$$
M, M, M-1, \dots, M-(p-3).
$$
This pattern of  $p-1$ terms, with the first two terms stable and the rest decreasing by 1,  then continues to repeat.

\begin{lemma}
\label{lemma:modFil}
The weight $k$ action of $\sigop$ on $\bD$ preserves $\wt{\Fil}^M(\bD^0)$.
\end{lemma}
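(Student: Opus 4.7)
The plan is to unfold the definition of the weight $k$ action and reduce the claim to a coefficient bound for the polynomial (or power series) $g_j(z) := (a+cz)^{k-j}(b+dz)^j$. Indeed, for $\mu \in \wt{\Fil}^M(\bDo)$ and $\gamma = \smallmat \in \sigop$, the value $(\mu|_k \gamma)(z^j)$ equals $\mu(\gamma \cdot_k z^j) = \mu(g_j)$, so what needs to be shown is that, for every $j \geq 0$, $\ord_p(\mu(g_j)) \geq M - jc_p$.

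First I would expand $g_j(z) = \sum_{i \geq 0} \alpha_i z^i$ and establish the coefficient estimate
\[
\ord_p(\alpha_i) \geq \max(0,\, i - j).
\]
The key observation is that, because $c \in p\Zp$, the coefficient of $z^r$ in $(a+cz)^{k-j}$ has $p$-adic valuation at least $r$. This uses the factorization $(a+cz)^{k-j} = a^{k-j}(1+cz/a)^{k-j}$ (which makes sense for any integer $k-j$ since $a \in \Zpx$ and $\binom{k-j}{r} \in \Z$ even for negative $k-j$); the factor $(c/a)^r$ contributes valuation $\geq r$. Meanwhile, the coefficient of $z^s$ in $(b+dz)^j$ is $\binom{j}{s}b^{j-s}d^s \in \Zp$. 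Convolving gives
\[
\ord_p(\alpha_i) \geq \min_{r+s=i}\ord_p\bigl([z^r](a+cz)^{k-j}\bigr) \geq \min_{r+s=i} r = \max(0,\, i-j),
\]
since the $s$-variable is constrained by $0 \leq s \leq j$.

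Next I would combine this with the defining bound $\ord_p(\mu(z^i)) \geq M - ic_p$ (and the trivial bound $\ord_p(\mu(z^i)) \geq 0$ from $\mu \in \bDo$). Term-by-term in $\mu(g_j) = \sum_i \alpha_i \mu(z^i)$ one gets
\[
\ord_p(\alpha_i \mu(z^i)) \geq \max(0, i-j) + (M - ic_p).
\]
For $i \leq j$ this is at least $M - ic_p \geq M - jc_p$; for $i > j$ it equals $M - jc_p + (i-j)(1-c_p) \geq M - jc_p$ since $c_p < 1$. Taking the minimum over $i$ yields $\ord_p(\mu(g_j)) \geq M - jc_p$ as required, so $\mu|_k\gamma \in \wt{\Fil}^M(\bDo)$.

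There is no real obstacle: the whole argument is a bookkeeping exercise once one isolates the coefficient bound on $g_j$. The only mildly delicate point is the case $k < j$, where $(a+cz)^{k-j}$ must be read as a binomial power series rather than a polynomial; this is handled uniformly by pulling out the unit $a^{k-j}$ and expanding in powers of $cz/a$, so the bound $\ord_p\bigl([z^r](a+cz)^{k-j}\bigr) \geq r$ holds regardless of the sign of $k-j$.
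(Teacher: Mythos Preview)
Your proof is correct and follows essentially the same approach as the paper's: both expand $(\mu|_k\gamma)(z^j)=\mu\bigl((a+cz)^{k-j}(b+dz)^j\bigr)$ as $\sum_i c_i\mu(z^i)$, observe that $c_i$ is divisible by $p^{\max(0,i-j)}$ because the factor $(a+cz)^{k-j}$ contributes at least $p^r$ to the coefficient of $z^r$ while $(b+dz)^j$ has degree $j$, and then split into the cases $i\leq j$ and $i>j$ to conclude $\ord_p(c_i\mu(z^i))\geq M-jc_p$. Your write-up is a bit more explicit about the coefficient bound $\ord_p(\alpha_i)\geq\max(0,i-j)$ and about why the binomial expansion of $(a+cz)^{k-j}$ is valid for $k-j<0$, but the substance is identical.
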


\begin{proof}
We argue as in \cite[Prop 7.1]{PS1} with some small changes.  Let $\mu \in \wt{\Fil}^M \Dkrig^0$.   For $j \geq 0$, we must show that $\ord_p (\mu \big| \gamma)(z^j) \geq M - j \cdot c_p$.  We compute
\begin{align*}
(\mu \big| \gamma)(z^j) &= \mu \left( (a+cz)^{k-j} (b+dz)^j\right) \\
&= \mu \left( \left( \sum_{m=0}^\infty \binom{k-j}{m} a^{k-j-m} c^m z^m
\right)
\left( \sum_{m=0}^j \binom{j}{m} b^{j-m} d^m z^m \right) \right) \\
&= \mu \left( \sum_{s=0}^\infty c_s z^s \right) 
=  \sum_{s=0}^\infty c_s \mu(z^s),
\end{align*}
for some $c_s \in \Zp$.  Since $\mu \in \wt{\Fil}^M(\Dkrig^0)$, we have that $\ord_p \mu(z^s) \geq M - s \cdot c_p$.  For $s \leq j$, we then have $\ord_p \mu(z^s) \geq M - j \cdot c_p$.  For $s \geq j$, an easy computation with the explicit formula above yields that $c_s$ is divisible by $p^{s-j}$.  Thus, 
$$
\ord_p (c_s \mu(z^s)) \geq M - s \cdot c_p + s - j = M + \frac{s}{p-1} - j \geq M + \frac{j}{p-1} - j = M - j \cdot c_p
$$
as desired.
\end{proof}

\begin{lemma}
\label{lemma:Sz}
The action of $S_z \subseteq \bA$ on $\bD$ preserves $\wt{\Fil}^M(\bD^0)$.
\end{lemma}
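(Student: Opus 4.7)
The plan is to verify the filtration condition directly from the definitions, by computing the moments of $g \cdot \mu$ for $g \in S_z$ and $\mu \in \wt{\Fil}^M(\bD^0)$. The calculation should fall out cleanly once one observes that the $S_z$-condition on coefficients of $g$ exactly compensates for the ``lost'' filtration level incurred by shifting moments upward.

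First, I would unwind the definition of the $\bA$-action: for $g \in \bA$ and $\mu \in \bD$, we have $(g \cdot \mu)(f) = \mu(gf)$. Writing $g(z) = \sum_{n \geq 0} a_n z^n$ with $\ord_p(a_n) \geq n c_p$, one computes
$$
(g \cdot \mu)(z^j) = \mu(g(z) z^j) = \sum_{n \geq 0} a_n \, \mu(z^{n+j}).
$$
To check that $g \cdot \mu \in \wt{\Fil}^M(\bD^0)$, I need to show each such moment has $p$-adic valuation at least $M - j c_p$.

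Next, I would bound each term of the sum individually. By hypothesis on $g$, $\ord_p(a_n) \geq n c_p$. By hypothesis on $\mu$, $\ord_p(\mu(z^{n+j})) \geq M - (n+j) c_p$. Adding these:
$$
\ord_p\bigl( a_n \mu(z^{n+j}) \bigr) \geq n c_p + M - (n+j) c_p = M - j c_p,
$$
with the $n$-dependence cancelling exactly. The bound is uniform in $n$, so the series $\sum_n a_n \mu(z^{n+j})$ converges $p$-adically and the sum satisfies $\ord_p\bigl((g\cdot\mu)(z^j)\bigr) \geq M - j c_p$, as required.

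Since there was no obstacle to speak of, I would also note briefly that this lemma is really the raison d'être for the modified filtration $\wt{\Fil}^M$: the original filtration $\Fil^M$ from \cite{PS1} fails to be stable under $S_z$ precisely because its moment bounds decrease by a full integer per degree, whereas multiplication by $z^n$ from $S_z$ only contributes $n c_p$ in extra valuation. The choice of the slope $c_p$ in the definition of $\wt{\Fil}^M$ is dictated exactly by the slope $c_p$ appearing in $S_z$, making the above cancellation work on the nose.
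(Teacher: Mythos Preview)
Your proof is correct and follows essentially the same approach as the paper: both compute $\ord_p\bigl(a_n\,\mu(z^{n+j})\bigr) \geq n c_p + M - (n+j)c_p = M - j c_p$, with the paper first reducing to a single monomial $a_n z^n$ while you handle the full series at once. Your concluding remark on why the slope $c_p$ is forced is a nice addition not present in the paper's proof.
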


\begin{proof}
It suffices to check that for $\mu \in \bD^0$ and for monomials of the form $a_n z^n$ such that $\ord_p(a_n) \geq  n \cdot c_p$, we have $a_n z^n \cdot \mu \in \bD^0$.  To this end, we compute
\begin{align*}
\ord_p ( (a_n z^n \cdot \mu)(z^j) ) 
&= \ord_p(a_n) + \ord_p( \mu(z^{j+n}) ) \\
&\geq  n \cdot c_p + M-(j+n) \cdot c_p \\
&= M-j \cdot c_p 
\end{align*}
as desired.
\end{proof}

We now simply define a filtration on $\bDsc$ by:
$$
\wt{\Fil}^M( \bDsc ) := \wt{\Fil}^M(\bDo) \otz \bArc.
$$
That is, $\mu \in \wt{\Fil}^M( \bDsc )$ if its $j$-th moment is an element of $\bArc$ whose $p$-adic valuation is at least $M - j \cdot c_p$, i.e.\ if when written as a power series in $w$, all of its coefficients have valuation at least $M - j \cdot c_p$.

\begin{lemma}
\label{lemma:filpreserve}
We have that $\wt{\Fil}^M( \bDsc )$ is preserved by the $\sigop$-action.
\end{lemma}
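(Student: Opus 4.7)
The plan is to decompose the $\sigop$-action on $\bDsc$ as prescribed by its definition,
$$
\mus\,|\,\gamma \;=\; \bigl(\aut(z,w)\cdot\mus\bigr)\,\big|_0\,\gamma,
$$
and to check that each of the two pieces, namely multiplication by the automorphy factor $\aut(z,w)$ and the weight $0$ action $|_0\gamma$, preserves $\wt{\Fil}^M(\bDsc)$. Since by definition $\wt{\Fil}^M(\bDsc) = \wt{\Fil}^M(\bDo)\otz\bArc$, both reductions will come down to extending $\bArc$-linearly a statement already proved at the level of $\bDo$.

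First I would handle the weight $0$ action. The weight $k$ action of $\sigop$ on $\bDo$ was shown in Lemma~\ref{lemma:modFil} to preserve $\wt{\Fil}^M(\bDo)$ for any integer $k$; specializing $k=0$ gives the statement for $|_0\gamma$. The weight $0$ action on $\bDsc$ is simply the $\bArc$-linear extension of the weight $0$ action on $\bDo$ (it does not interact with the $w$-variable), so it preserves $\wt{\Fil}^M(\bDo)\otz\bArc = \wt{\Fil}^M(\bDsc)$.

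Next I would handle multiplication by the automorphy factor. By Theorem~\ref{thm:autsSz}, $\aut(z,w)\in S_z\otz\bAro\subseteq S_z\otz\bArc$. By Lemma~\ref{lemma:Sz}, multiplication by any element of $S_z$ preserves $\wt{\Fil}^M(\bDo)$. Extending $\bArc$-linearly, multiplication by an arbitrary element of $S_z\otz\bArc$ preserves $\wt{\Fil}^M(\bDo)\otz\bArc = \wt{\Fil}^M(\bDsc)$; in particular, so does multiplication by $\aut(z,w)$.

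Combining these two observations, the composite operation $\mus\mapsto(\aut(z,w)\cdot\mus)|_0\gamma$ preserves $\wt{\Fil}^M(\bDsc)$, which is precisely the desired $\sigop$-stability. There is no real obstacle here beyond the bookkeeping of tensoring up: both ingredients, Lemma~\ref{lemma:modFil} and Lemma~\ref{lemma:Sz}, were designed exactly for this purpose, and Theorem~\ref{thm:autsSz} supplies the crucial integrality property of $\aut(z,w)$ (namely that its $z$-coefficients decay at the rate $c_p$ demanded by the modified filtration), which is the whole reason the filtration had to be modified from the one in \cite{PS1} in the first place.
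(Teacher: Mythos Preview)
Your proposal is correct and follows exactly the same approach as the paper's proof: decompose the action as $\mus\mapsto(\aut(z,w)\cdot\mus)|_0\gamma$ and invoke Theorem~\ref{thm:autsSz}, Lemma~\ref{lemma:modFil}, and Lemma~\ref{lemma:Sz}. The paper's proof is simply a one-line citation of these three results, whereas you have spelled out explicitly how they combine.
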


\begin{proof}
The $\sigop$-action is defined as:
$$
\mu | \gamma = (\aut(z,w) \cdot \mu) |_0 \gamma.
$$
The lemma thus follows from Theorem \ref{thm:autsSz},  Lemma \ref{lemma:modFil}, and Lemma \ref{lemma:Sz}.
\end{proof}

We set $\wt{\F}(M) := (\bDsc )/  \wt{\Fil}^M( \bDsc )$.  Unfortunately, note that $\wt{\F}(M)$ is not finite.  Indeed, this module still keeps track of the coefficients of infinitely many powers of $w$.  To fix this, fix $L>0$, and we define
$$
\wt{\Fil}^{M,L}( \bDsc) := \wt{\Fil}^M(\bD^0) \otz \bArc  + w^L \bDsc.
$$
That is $\mu \in \wt{\Fil}^{M,L}( \bDsc )$ if the first $L$ coefficients of 
its $j$-th moment (thought of as an element of $\bArc$) have valuation at least $M - j \cdot c_p$.  Set $$\wt{\F}(M,L) := (\bDsc) / \wt{\Fil}^{M,L}( \bDsc ),$$ and  
then
\begin{align*}
\wt{\F}(M,L)
&\cong
\bD^0 / \wt{\Fil}^M(\bD^0) \otimes (\bArc / w^L \bArc)   \\
&\cong \bD^0 / \wt{\Fil}^M(\bD^0) \otimes  \left( \Zp[w] / w^L \Zp[w] \right)
\end{align*}
which is finite.  We will refer to $\wt{\Fil}^M(\bDsc)$ as the $M$-th approximation module and to $\wt{\Fil}^{M,L}(\bDsc)$ as the $(M,L)$-th finite approximation module.

\begin{remark}
All of the previous discussion goes through equally well if we replace $\bDsc$ with $\bDso$ allowing us to define $\wt{\Fil}^M(\bDso)$ and $\wt{\Fil}^{M,L}(\bDso)$.
We further note that since $\bArc / w^L \bArc \cong \bAro / w^L \bAro$, we have
$$
\bDso / \wt{\Fil}^{M,L}(\bDso) \cong \bDsc / \wt{\Fil}^{M,L}(\bDsc).
$$
When working with these finite approximation modules, one cannot distinguish $R^0$ (the Tate algebra) from $\Lambda$ (the Iwasawa algebra).
\end{remark}

\subsection{Handling denominators}
\label{sec:denoms}

The one downside to the above formulation of finite approximation modules is that it only allows us to approximate families of distributions whose moments are integral power series.  However, in solving the difference equation, the resulting distributions don't have integral moments (they aren't even bounded!).  To fix this problem, we proceed as in \cite[page 29]{PS1}.  Set
$$
\wt{K}_{0}(\bAr) = \left\{ \mu \in \Doc(\bAr) \text{~such~that~} \ord_p(\mu(z^j)) \geq -j \cdot c_p     \right\}.
$$

\begin{lemma}
We have
\begin{enumerate}
\item $\wt{K}_{0}(\bAr)$ is a $\sigop$-module;
\item $p^{M} \wt{\K}_{0}(\bAr) \cap \bDso= \wt{\Fil}^M(\bDso).$
\end{enumerate}
\end{lemma}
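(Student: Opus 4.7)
My plan is to adapt the valuation-bookkeeping arguments of Lemmas \ref{lemma:modFil} and \ref{lemma:Sz}, which established $\sigop$-stability of $\wt{\Fil}^M(\bDso)$, to the growth condition $\ord_p(\mu(z^j)) \geq -jc_p$ defining $\wt{K}_0(\bAr)$. Part (2) will then follow purely formally from the definitions.

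For part (1), I would use the decomposition $\mu \big| \gamma = (\aut(z,w) \cdot \mu) \big|_0 \gamma$ and treat each factor separately. For the multiplication step, writing $\aut(z,w) = \sum_n a_n(w) z^n$, Theorem \ref{thm:autsSz} yields $\ord_p(a_n(w)) \geq n c_p$, so the $j$-th moment of $\aut(z,w) \cdot \mu$ is $\sum_n a_n(w) \mu(z^{n+j})$, each term of which has valuation at least $n c_p - (n+j) c_p = -j c_p$; hence we remain in $\wt{K}_0(\bAr)$. For the weight-0 step, I would mimic Lemma \ref{lemma:modFil} specialized to $k=0$: expand $(\gamma \cdot_0 z^j)(z) = (a+cz)^{-j}(b+dz)^j$ as a power series $\sum_s c_s z^s$ via the binomial series (valid since $a \in \Zpx$), and observe that $c_s \in \Zp$ with $\ord_p(c_s) \geq s - j$ for $s \geq j$, since every monomial appearing in $c_s$ carries a factor $c^n$ with $n \geq s - j$. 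Splitting $(\mu \big|_0 \gamma)(z^j) = \sum_s c_s \mu(z^s)$ into the ranges $s \leq j$ and $s \geq j$, and using the identity $1 - c_p = 1/(p-1)$, a short check shows that each term has valuation at least $-j c_p$, as needed.

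Part (2) is a two-sided inclusion. If $\mu \in \wt{\Fil}^M(\bDso)$, then $p^{-M} \mu$ lies in $\Doc(\bAr)$ with $\ord_p(p^{-M} \mu(z^j)) \geq -j c_p$, so $p^{-M}\mu \in \wt{K}_0(\bAr)$ and thus $\mu \in p^M \wt{K}_0(\bAr) \cap \bDso$. Conversely, any $\mu = p^M \nu$ with $\nu \in \wt{K}_0(\bAr)$ and $\mu \in \bDso$ satisfies $\ord_p(\mu(z^j)) \geq M - j c_p$, which together with $\bDso$-integrality is exactly the definition of $\wt{\Fil}^M(\bDso)$. The only place where some care is required is the coefficient estimate $\ord_p(c_s) \geq s - j$ in the weight-0 step, because here $k - j$ is negative and the relevant factor $(a+cz)^{-j}$ is no longer a polynomial; but the binomial coefficients $\binom{-j}{n}$ are still integers and the $c^n$ factor (with $c \in p\Zp$) still supplies the needed divisibility, so no new obstacle arises.
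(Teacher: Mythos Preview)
Your proposal is correct and follows essentially the same approach as the paper: the paper simply says part (1) ``follows exactly as in Lemma~\ref{lemma:filpreserve}'' (i.e., decompose the action as $(\aut\cdot\mu)|_0\gamma$ and invoke the analogues of Theorem~\ref{thm:autsSz}, Lemma~\ref{lemma:modFil}, and Lemma~\ref{lemma:Sz}) and that part (2) is immediate from the definitions, and you have unpacked precisely these steps with the obvious adaptations to the growth condition defining $\wt{K}_0(\bAr)$. Your care in noting that $\binom{-j}{n}\in\Z$ so that the coefficient estimate $\ord_p(c_s)\geq s-j$ survives the passage from polynomials to power series is the only new wrinkle, and you have handled it correctly.
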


\begin{proof}
Part one follows exactly as in Lemma \ref{lemma:filpreserve}.
Part two follows immediately from the definitions.
\end{proof}

We thus have the following alternative description of our approximation modules in families:
$$
\frac{\bDso + p^M \wt{\K}_0(\bAr)}{p^M \wt{\K}_0(\bAr)}  \cong \frac{\bDso}{p^M \wt{\K}_0(\bAr) \cap \bDso} \cong \frac{\bDso}{\wt{\Fil}^M(\bDso)} 
$$
Note that these maps are $\sigop$-isomorphisms.  Thus, as long as we are working with distributions in  $\bDso + p^M \wt{\K}_0(\bAr)$, it makes sense to project to the $M$-th approximation module.

\subsection{Solving the difference equation in $\wt{\F}(M)$}

We now use the description of $\wt{\F}(M)$ given in section \ref{sec:denoms} to explain how one solves the difference equation in these approximation modules.  We first review the case of a fixed weight, and then discuss the case of families.


\subsubsection{The case of a fixed weight}

The following is a slight improvement on \cite[Lemma 7.5]{PS1}.  We refer to \textit{loc.\ cit.} for undefined notation. In what follows, set
$$
\K_{0} = \left\{ \mu \in \Doc \text{~such~that~} \ord_p(\mu(z^j)) \geq -j     \right\}.
$$

\begin{lemma}
\label{lemma:denoms}
Let $\mu \in \Doc$ and $\nu \in \bDo$ with $\mu | \Delta = \nu$.  Then for any $M \geq 0$, we have
$$
p^m \mu \in \bDo + p^M \K_0
$$
where $m = \left\lfloor \frac{\log(M+1)}{\log p} \right\rfloor$.
\end{lemma}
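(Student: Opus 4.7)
The proof proceeds by analyzing the moment recursion induced by the difference equation. Since the automorphy factor $F_{\kappa,1,0}$ equals $1$ identically, $\smatrix{1}{1}{0}{1}$ acts on $\bA$ simply by $f(z)\mapsto f(1+z)$, so $\mu|\Delta=\nu$ unfolds into
\[
\sum_{i=0}^{j-1}\binom{j}{i}\mu(z^i) \;=\; \nu(z^j) \qquad (j\geq 0).
\]
Isolating the top term gives the recursion
\[
\mu(z^{j-1}) \;=\; \frac{1}{j}\left(\nu(z^j) - \sum_{i=0}^{j-2}\binom{j}{i}\mu(z^i)\right),
\]
initialized by $\mu(z^0)=\nu(z^1)\in\Zp$.

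The heart of the argument is the inductive bound $\ord_p(\mu(z^j))\geq -g(j)$ with $g(j):=\lfloor\log(j+1)/\log p\rfloor$. I would prove this by induction on $j$: the factor $1/j$ contributes $-\ord_p(j)$, but whenever $\ord_p(j)$ is large (i.e.\ $j$ is close to a power of $p$), Kummer's theorem forces the binomial coefficients $\binom{j}{i}$ for $0<i<j$ to carry correspondingly large powers of $p$, cancelling the loss from $1/j$ against the inherited loss in the bound on $\mu(z^i)$. Carrying out this combinatorial accounting so that the net valuation stays at $-g(j)$, rather than degrading to a weaker bound such as $-\ord_p(j!)$, is the main obstacle of the proof.

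Granted this bound, the containment $p^m\mu \in \bDo + p^M\K_0$ reduces to a direct check on moments: for each $j$ we need a decomposition $p^m\mu(z^j) = a_j + p^M b_j$ with $a_j\in\Zp$ and $\ord_p(b_j)\geq -j$, equivalently $p^m\mu(z^j)\in\Zp + p^{M-j}\Zp$. For $j\leq M$ this amounts to $\ord_p(\mu(z^j))\geq -m$, which holds since $g(j)\leq g(M)=m$. For $j>M$ it amounts to $\ord_p(\mu(z^j))\geq M-j-m$, i.e.\ $j-g(j)\geq M-g(M)$; this holds because the map $j\mapsto j-g(j)$ is non-decreasing (its increments lie in $\{0,1\}$, since $g(j+1)-g(j)\leq 1$). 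An alternative starting point is the generating-function identity $F(x)=G(x)/(e^x-1)$ relating the exponential generating series of the moment sequences, which expresses $\mu(z^n)$ in terms of $\nu$-moments and Bernoulli numbers; but applying von Staudt--Clausen only yields a bound of the form $-g(n)+O(1)$, so the direct recursion is preferable for the sharp constant quoted in the lemma.
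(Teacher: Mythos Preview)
Your reduction to the moment bound $\ord_p(\mu(z^j))\geq -g(j)$ with $g(j)=\lfloor\log(j+1)/\log p\rfloor$ is correct, and your verification that this bound implies the containment $p^m\mu\in\bDo+p^M\K_0$ is fine. The gap is exactly where you flag it: the inductive step does not close. Take $j=2p$. In the recursion
\[
\mu(z^{2p-1})=\frac{1}{2p}\Bigl(\nu(z^{2p})-\sum_{i=0}^{2p-2}\binom{2p}{i}\mu(z^i)\Bigr),
\]
the term $i=p$ is the problem: $\ord_p\binom{2p}{p}=0$ (no carries in $p+p$ base $p$), while the inductive hypothesis only gives $\ord_p(\mu(z^p))\geq -g(p)=-1$. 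So the bracket has valuation bounded only by $-1$, and after dividing by $2p$ you get $\ord_p(\mu(z^{2p-1}))\geq -2$, whereas you need $\geq -g(2p-1)=-1$. Kummer's theorem does not save you here: the binomial coefficient genuinely carries no power of $p$. The true value of $\mu(z^p)$ happens to be integral, but the inductive hypothesis you have written down is too weak to see this, and strengthening it correctly would require essentially redoing the explicit analysis the paper carries out.

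You also have the comparison with the Bernoulli-number approach backwards. The paper's proof uses precisely the explicit formula $\mu(z^r)=\sum_j \frac{\nu(z^j)}{j}\binom{r}{j-1}b_{r-j+1}$ (from \cite[Theorem~4.5]{PS1}) and gets the sharp constant, not $-g(n)+O(1)$. The point is that von Staudt--Clausen alone is not enough: when $j=ap^m$ exactly realizes $\ord_p(j)=m$ and $b_{r-j+1}$ simultaneously has a $p$ in its denominator, one shows via Lucas' theorem that the base-$p$ digits of $ap^m-1$ (namely $(a{-}1,\,p{-}1,\dots,p{-}1)$) force $p\mid\binom{r}{ap^m-1}$ for the relevant $r$, recovering the lost factor. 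This digit argument, not the recursion, is what makes the constant sharp.
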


\begin{proof}

By the explicit solution of the difference equation given in \cite[Theorem 4.5]{PS1}, it suffices to see for all $j \geq 1$ that
$$
p^m \cdot \frac{\eta_{j-1}}{j} \in \bDo + p^M \K_0.
$$
We must thus check that for $r \leq M$, we have
\begin{equation}
\label{eqn:1}
p^m \cdot \frac{\eta_{j-1}}{j} (z^r) \in \Zp
\end{equation}
and for $r > M$, we have
\begin{equation}
\label{eqn:2}
p^{r-M+m} \cdot \frac{\eta_{j-1}}{j} (z^r) \in \Zp
\end{equation}

We start with the case $r \leq M$.  To see \eqref{eqn:1}, it suffices to see that
$$
\frac{p^m}{j} \binom{r}{j-1} b_{r-j+1} \in \Zp
$$
for $r \geq j-1$.  We thus have that $M \geq j-1$ and so $\frac{p^m}{j} \in \Zp$.  If $\frac{p^m}{j} \in p\Zp$, then we are done by the Clausen--von Staudt theorem as each Bernoulli number is in $\frac{1}{p} \Zp$.  Thus, we just need to consider the case where $j = a p^m$ with $1 \leq a \leq p-1$ and deduce that $\binom{r}{j-1} b_{r-j+1} \in \Zp$.

If $r=j-1$ we are done as $b_0 =1$.  Then, for $r>j-1$, we have $b_{r-j+1}$ has a $p$ in its denominator if and only if $p-1$ divides $r-j+1$.  In this case, we have $$ r= j-1 = a-1 \mod{p-1}$$
and we must deduce that $\binom{r}{ap^m-1}$ is divisible by $p$.  	By Lucas' theorem, it suffices to see that one of the base $p$ digits of $ap^m-1$ is greater than one of the base $p$ digits of $r$.  The base $p$ expansion of $ap^m -1$ is $(a-1~p-1~ p-1~ \dots ~p-1)_p$.  Since $r \leq M < p^{m+1}$, the only possibility that $r$ has every base $p$ digit larger than those of $ap^m-1$ is if the base $p$ representation of $r$ is 
$(c~p-1~~p-1~~\dots~~p-1)_p$ with $p-1 \geq c > a-1$.  In this case, $r = (c+1)p^m-1$.  But then $r \equiv c \mod{p-1}$ which is impossible as $r \equiv a-1 \mod{p-1}$.  

Now in the second case where $r > M$, set $s = r-M$.  To see \eqref{eqn:2}, if suffices to see that
$$
\frac{p^{m+s}}{j} \binom{r}{j-1} b_{r-j+1} \in \Zp
$$
for $r \geq j-1$.  Note that 
$$
m = \left\lfloor \frac{\log(M+1)}{\log p} \right\rfloor \implies p^{m+1} \geq M+2 \implies
p^{m+s} \geq M+2 +s = r+2 > j
$$
and thus $\frac{p^{m+s}}{j}$ is divisible by $p$.  Again, by the Clausen--von Staudt theorem, we are done.
\end{proof}




\begin{cor}
\label{cor:filtdiff}
Let $\ol{\nu} \in \F_k(M)$ have total measure 0.  Then there exists $\ol{\mu} \in \F_k(M)$ such that
$$
\ol{\mu} | \Delta = p^m \ol{\nu}
$$
where $m = \left \lfloor \frac{\log(M+1)}{\log p} \right\rfloor$.
\end{cor}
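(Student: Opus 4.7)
The plan is to reduce to the statement of Lemma \ref{lemma:denoms} by lifting $\ol{\nu}$ to an honest integral distribution of total measure zero, solving the difference equation in the larger space $\Doc$ (where denominators are allowed), and then projecting the denominator-controlled solution back into $\F_k(M)$ via an alternative description analogous to the one developed in section \ref{sec:denoms} for families.

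First, I would rewrite the finite approximation module as a subquotient allowing denominators. Exactly as in section \ref{sec:denoms}, one checks $\K_0$ is $\sop$-stable and that $p^M \K_0 \cap \bDo = \Fil^M(\bDo)$, giving a $\sop$-equivariant isomorphism
\[
\F_k(M)=\bDo/\Fil^M(\bDo)\;\cong\;(\bDo + p^M\K_0)/p^M\K_0.
\]
Next, lift $\ol{\nu}$ to $\nu_0\in\bDo$. Since $\ol{\nu}$ has total measure zero in $\F_k(M)$, we have $\nu_0({\bf 1})\in p^M\Zp$, so letting $\delta_0$ denote the distribution whose only non-zero moment is the $0$-th (equal to $1$) and setting $\nu:=\nu_0-\nu_0({\bf 1})\delta_0$, we obtain a lift with $\nu({\bf 1})=0$ that still represents $\ol{\nu}$ in $\F_k(M)$ (the adjustment lies in $\Fil^M(\bDo)$).

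Because $\nu$ has total measure zero, the fixed-weight analogue of Lemma \ref{lemma:diff} (namely, \cite[Theorem 4.5]{PS1}) produces a unique $\mu\in\Doc$ with $\mu\,|\,\Delta=\nu$. Applying Lemma \ref{lemma:denoms} directly gives $p^m\mu\in\bDo + p^M\K_0$ for $m=\lfloor \log(M+1)/\log p\rfloor$. Thus $p^m\mu$ determines an element $\ol{\mu}\in(\bDo + p^M\K_0)/p^M\K_0\cong \F_k(M)$. Multiplying the equation $\mu\,|\,\Delta=\nu$ by $p^m$ gives $(p^m\mu)\,|\,\Delta = p^m\nu$ in $\Doc$; reducing both sides modulo $p^M\K_0$ (which is legitimate because $\bDo+p^M\K_0$ is $\sop$-stable and the isomorphism above is $\sop$-equivariant) yields $\ol{\mu}\,|\,\Delta = p^m\ol{\nu}$ in $\F_k(M)$, as required.

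The real content has already been done in Lemma \ref{lemma:denoms}, which is where the arithmetic of Bernoulli denominators (and the delicate Lucas-theorem argument) is used to bound exactly how far the solution to the difference equation can fail to be integral. The corollary is essentially a packaging statement, and the only minor technical point is the need to adjust the lift of $\ol{\nu}$ to have total measure equal to zero on the nose (rather than merely modulo $p^M$) so that the difference equation is actually solvable in $\Doc$; everything else is formal once the subquotient description of $\F_k(M)$ is recognized as $\sop$-equivariant.
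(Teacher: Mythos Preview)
Your proof is correct and follows essentially the same approach as the paper: lift $\ol{\nu}$ to an integral distribution with total measure zero, solve the difference equation in $\Doc$, apply Lemma~\ref{lemma:denoms} to place $p^m\mu$ in $\bDo + p^M\K_0$, and project via the identification $(\bDo + p^M\K_0)/p^M\K_0 \cong \F_k(M)$. Your treatment is in fact slightly more explicit than the paper's in two places---you spell out the adjustment of the lift by $\nu_0({\bf 1})\delta_0$ to force exact vanishing of the total measure, and you correctly track the factor $p^m$ coming out of Lemma~\ref{lemma:denoms}---but the underlying argument is identical.
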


\begin{proof}
Lift $\ol{\nu}$ to some element ${\nu}$ in $\bDo$ with total measure 0.  Solving the difference \cite[Theorem 4.5]{PS1}, then yields ${\mu} \in \Doc$ with ${\mu} \big| \Delta = {\nu}$.  Then by Lemma \ref{lemma:denoms}, we have $\mu \in \bDo + p^M \K_0$.  Projecting $\mu$ to $(\bDo + p^M \K_0)/ p^M \K_0 \cong \bDo / \Fil^M(\bDo)$ then yields a solution to the difference equation in the finite approximation module.
\end{proof}

We note that the above corollary tells us the existence of solution to the difference equation in $\F(M)$.  We now describe how to explicitly write down such a solution.  Moreover, by analyzing this explicit solution, we will see that a smaller power of $p$ is needed to control denominators.

To start, we note that the solution to the difference equation in $\F(M)$ is not unique. 

\begin{lemma}
\label{lemma:kerdiffeqn}
If $\mu \in \Fil^{M-1}(\bDo)$, then $\mu \big| \Delta \in \Fil^M(\bDo)$.
\end{lemma}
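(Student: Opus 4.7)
The plan is a direct calculation. First I would unwind the weight-$k$ action of $\gamma = \smatrix{1}{1}{0}{1}$: since $a=1$ and $c=0$, the automorphy factor $(a+cz)^k$ equals $1$, so the action is independent of weight, and
\[
(\mu \big|_k \gamma)(z^j) = \mu\!\left((1+z)^j\right) = \sum_{i=0}^{j}\binom{j}{i}\mu(z^i).
\]
Subtracting the identity action, the crucial observation is that the top-degree term ($i=j$) cancels, leaving
\[
(\mu \big| \Delta)(z^j) = \sum_{i=0}^{j-1}\binom{j}{i}\mu(z^i).
\]

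Next I would feed in the hypothesis $\mu\in\wt{\Fil}^{M-1}(\bDo)$ to estimate valuations. For $0\leq j\leq M$, every index $i$ in the sum satisfies $i\leq j-1\leq M-1$, so the hypothesis gives $\ord_p(\mu(z^i))\geq (M-1)-i$. Since the binomial coefficients $\binom{j}{i}$ are integers, each summand has $p$-adic valuation at least $(M-1)-i\geq (M-1)-(j-1) = M-j$, and the same bound holds for the whole sum. This is exactly the inequality required for membership in $\wt{\Fil}^M(\bDo)$.

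(A brief remark on which filtration is intended: the preceding material defines both $\Fil^M$ and the modified $\wt{\Fil}^M$, but the whole gain of one in the filtration index here is driven purely by the cancellation of the $i=j$ term, so the argument works verbatim for either version of the filtration --- the valuation bound on $\mu(z^i)$ just gets shifted by the same amount on both sides.)

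The only real content is the cancellation of the leading term in the binomial expansion of $(1+z)^j$; there is no actual obstacle, so I do not anticipate any significant difficulty. The short proof just needs to state the formula for $\mu\big|\Delta$ and then observe that restricting the sum to $i\leq j-1$ is what produces the improvement from $\wt{\Fil}^{M-1}$ to $\wt{\Fil}^M$.
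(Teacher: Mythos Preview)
Your core argument is correct and is essentially identical to the paper's own proof: expand $(\mu\mid\Delta)(z^j)$ as $\sum_{i=0}^{j-1}\binom{j}{i}\mu(z^i)$, use $\ord_p(\mu(z^i))\geq (M-1)-i$ for $i\leq j-1$, and take the minimum to get $M-j$.

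However, your parenthetical remark that the argument ``works verbatim for either version of the filtration'' is wrong, and the paper explicitly flags this. For the modified filtration $\wt{\Fil}^M$, the hypothesis gives $\ord_p(\mu(z^i))\geq (M-1)-ic_p$ with $c_p=\frac{p-2}{p-1}<1$. Minimizing over $i\leq j-1$ yields
\[
(M-1)-(j-1)c_p \;=\; M - jc_p + (c_p-1) \;=\; M - jc_p - \tfrac{1}{p-1},
\]
which falls short of the required $M-jc_p$ by $\tfrac{1}{p-1}$. The issue is precisely that passing from $i=j$ to $i\leq j-1$ gains only $c_p$ in the modified filtration, not the full $1$ needed to go from $\wt{\Fil}^{M-1}$ to $\wt{\Fil}^M$. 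This is why, immediately after this lemma, the paper states a weaker version (Proposition~\ref{prop:explicitfinitediff}) in the families setting, losing a full unit of precision rather than recovering it via the kernel argument. So drop that remark.
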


\begin{proof}
Take $\mu \in \Fil^{M-1}(\bDo)$, and we compute
$$
(\mu \big| \Delta)(z^j) = \mu((z+1)^j) - \mu(z^j)
= \sum_{i=0}^{j-1} \binom{j}{i} \mu(z^i).
$$
Since $\mu(z^i) \in p^{M-1-i}\Zp$ for $i$ between $0$ and $j-1$, we see that $(\mu \big| \Delta)(z^j) \in p^{M-1-(j-1)} \Zp = p^{M-j}\Zp$.  Thus $\mu \big| \Delta \in \Fil^{M}(\bDo)$.
\end{proof}

\begin{prop}
\label{prop:solvediff}
Take  $\ol{\nu} \in \F_k(M)$ with total measure zero, and set $m = \left\lfloor \frac{\log(M)}{\log p} \right\rfloor$.  Define $\ol{\mu}_0 \in \F_k(M-1)$ by
$$
\ol{\mu}_{0}(z^r) = \sum_{j=1}^{r+1} \frac{p^m \ol{\nu}(z^j)}{j} \binom{r}{j-1} b_{r+1-j}
$$
for $0 \leq r \leq M-2$.
If $\ol{\mu} \in \F_k(M)$ is any element which projects to $\ol{\mu}_{0}$ in $\F_k(M-1)$, then $\ol{\mu} \big| \Delta = p^m \ol{\nu}$ in $\F_k(M)$.
\end{prop}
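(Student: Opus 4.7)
The plan is to reduce to the explicit solution of the difference equation in $\Doc$ provided by \cite[Theorem 4.5]{PS1}, then control denominators modulo the approximation filtration using Lemma \ref{lemma:denoms}, and finally use Lemma \ref{lemma:kerdiffeqn} to absorb the ambiguity in choosing the lift $\ol{\mu}$ of $\ol{\mu}_0$.

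First I would lift $\ol{\nu}\in\F_k(M)$ to a genuine $\nu\in\bDo$ of total measure exactly zero. Since any lift has $\nu({\bf 1})\in p^M\Zp$, subtracting $\nu({\bf 1})$ times the distribution whose zeroth moment is $1$ (and all others vanish) produces such a $\nu$ without changing its class in $\F_k(M)$. By \cite[Theorem 4.5]{PS1}, there is a unique $\mu\in\Doc$ with $\mu\mid\Delta=\nu$, given by
\[
\mu(z^r)=\sum_{j=1}^{r+1}\frac{\nu(z^j)}{j}\binom{r}{j-1}b_{r+1-j}.
\]

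Next I would apply Lemma \ref{lemma:denoms} with $M-1$ in place of $M$; the corresponding exponent is $\lfloor\log M/\log p\rfloor=m$, exactly as in the statement. This yields $p^m\mu\in\bDo+p^{M-1}\K_0$, so we may write $p^m\mu=\eta+p^{M-1}\xi$ with $\eta\in\bDo$ and $\xi\in\K_0$. For $0\le r\le M-2$, $\ord_p(p^{M-1}\xi(z^r))\ge M-1-r$, so $\eta(z^r)\equiv p^m\mu(z^r)\pmod{p^{M-1-r}}$, and the latter is precisely the expression defining $\ol{\mu}_0(z^r)$. Hence the class of $\eta$ in $\F_k(M-1)$ equals $\ol{\mu}_0$.

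Now let $\ol{\eta}\in\F_k(M)$ denote the reduction of $\eta$; it is one choice of lift of $\ol{\mu}_0$. I would compute
\[
\eta\mid\Delta=p^m\mu\mid\Delta-p^{M-1}\xi\mid\Delta=p^m\nu-p^{M-1}(\xi\mid\Delta),
\]
and note that for $\xi\in\K_0$ the identity $(\xi\mid\Delta)(z^j)=\sum_{i=0}^{j-1}\binom{j}{i}\xi(z^i)$ forces $\ord_p((\xi\mid\Delta)(z^j))\ge-(j-1)$, whence $p^{M-1}(\xi\mid\Delta)\in\Fil^M(\bDo)$. Therefore $\ol{\eta}\mid\Delta=p^m\ol{\nu}$ in $\F_k(M)$. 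Finally, any other lift $\ol{\mu}\in\F_k(M)$ of $\ol{\mu}_0$ differs from $\ol{\eta}$ by an element of $\Fil^{M-1}(\bDo)/\Fil^M(\bDo)$, and by Lemma \ref{lemma:kerdiffeqn} this difference acts trivially after applying $\Delta$ modulo $\Fil^M(\bDo)$; so $\ol{\mu}\mid\Delta=p^m\ol{\nu}$ as well.

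The main obstacle is really the denominator estimate in Lemma \ref{lemma:denoms} itself, which has already been established; the remaining steps are bookkeeping. The only subtle point to be careful about is the matching of indices: the lemma has to be invoked at level $M-1$ (not $M$) so that the power of $p$ reads $\lfloor\log M/\log p\rfloor$ rather than $\lfloor\log(M+1)/\log p\rfloor$, and this is exactly what makes the improvement over Corollary \ref{cor:filtdiff} possible.
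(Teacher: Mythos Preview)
Your proof is correct and follows essentially the same approach as the paper: lift to $\bDo$, solve the difference equation in $\Doc$ via \cite[Theorem 4.5]{PS1}, invoke Lemma \ref{lemma:denoms} at level $M-1$ to clear denominators, and use Lemma \ref{lemma:kerdiffeqn} to pass between lifts of $\ol{\mu}_0$. Your version is in fact slightly more careful than the paper's: where the paper speaks loosely of ``the image of $\mu$ in $\F(M)$'' (which is not a priori well-defined, since one only knows $p^m\mu\in\bDo+p^{M-1}\K_0$), you make the decomposition $p^m\mu=\eta+p^{M-1}\xi$ explicit and verify directly that $p^{M-1}(\xi\mid\Delta)\in\Fil^M(\bDo)$, which is precisely the computation needed to justify that step.
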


\begin{proof}
We first note that the formula defining $\ol{\mu}_{0}(z^r)$ makes sense.  To see this, note that
\[\frac{p^m}{j} \binom{r}{j-1} b_{r+1-j} \in \Zp
\]
by the proof of Lemma \ref{lemma:denoms}.  Further, $\ol{\nu}(z^j)$ is well-defined modulo $p^{M-j}$, and thus $\ol{\mu}_0(z^r)$ is well-defined modulo $p^{M-1-r}$.  Hence, $\ol{\mu}_0$ is a well-defined element of $\F(M-1)$.  

Next, let $\nu$ denote any lift of $\ol{\nu}$ to $\bDo$ with total measure 0, and let $\mu \in \Doc$ be the unique distribution satisfying $\mu \big| \Delta = p^m \nu$ (by \cite[Theorem 4.5]{PS1}).  Then the image of $\mu$ in $\F(M-1)$ equals $\ol{\mu}_0$ since the explicit formulas in \cite[Theorem 4.5]{PS1} exactly match the formulas defining $\ol{\mu}_0$ in this proposition.  (Note that our choice of $m$ allows us to form this projection.)  Thus, for any $\ol{\mu} \in \F(M)$ lifting $\ol{\mu}_0$, we have that the image of $\mu$ in $\F(M)$ equals $\ol{\mu}$ up to some distribution taking values in $\Fil^{M-1}(\bDo)$.  Our proposition then follows from Lemma \ref{lemma:kerdiffeqn}.
\end{proof}

\subsubsection{The difference equation in $\wt{\F}(M)$}

We now generalize the discussion of the previous section to the case of families.

\begin{lemma}
\label{lem:diffdenoms}
Let $\mu \in \Docs$ be such that $\mu \big| \Delta = \nu \in \bDsc$.  
Then for any $M \geq 0$, we have
$$
p^m \mu \in \bDso + p^{M} \wt{\K}_{0}(\bAr)
$$
where $m = \left\lfloor \frac{\log(M/c_p+1)}{\log p} \right\rfloor$.
\end{lemma}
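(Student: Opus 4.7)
The plan is to follow the pattern of Lemma~\ref{lemma:denoms}, exploiting that on $\Docs$ the difference operator $\Delta=\smatrix{1}{1}{0}{1}-1$ acts via the weight-$0$ action: by Lemma~\ref{lemma:Kprops}(1) the automorphy factor $\autv{1}{0}{m}(z,w)=1$, so no twisting appears. Consequently the explicit solution formula of \cite[Theorem 4.5]{PS1} applies verbatim with $\bArc$-valued moments, giving
\[
  \mu(z^r)=\sum_{j=1}^{r+1}\frac{\nu(z^j)}{j}\binom{r}{j-1}b_{r+1-j}\qquad(r\geq 0).
\]

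Unwinding the definitions, the containment $p^m\mu\in\bDso+p^M\wt{\K}_0(\bAr)$ is equivalent to the moment-wise bound
\[
  \ord_p\!\bigl(p^m\mu(z^r)\bigr)\geq\min\bigl(0,\,M-rc_p\bigr)\qquad\text{for every }r\geq 0,
\]
where $\ord_p$ is the Gauss norm on $\bArc$.  Since $\ord_p(\nu(z^j))\geq 0$, this reduces to bounding $\ord_p\!\bigl(\tfrac{p^m}{j}\binom{r}{j-1}b_{r+1-j}\bigr)$ for $1\leq j\leq r+1$, and I would split into two regimes mirroring those of Lemma~\ref{lemma:denoms}.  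In the range $r\leq M/c_p$, the goal becomes $\tfrac{p^m}{j}\binom{r}{j-1}b_{r+1-j}\in\Zp$; Clausen--von~Staudt reduces this to $j=ap^m$ with $1\leq a\leq p-1$ and $(p-1)\mid(r-j+1)$, where a Lucas-type base-$p$ digit comparison yields the required divisibility.  The choice $m=\lfloor\log(M/c_p+1)/\log p\rfloor$ is precisely what forces the cutoff $r<p^{m+1}$ driving the contradiction.  In the complementary range $r>M/c_p$, setting $s=\lceil rc_p\rceil-M$, one instead needs $\tfrac{p^{m+s}}{j}\binom{r}{j-1}b_{r+1-j}\in\Zp$; but $p^{m+s}\geq r+2>j$ forces $p\mid p^{m+s}/j$, which combined with Clausen--von~Staudt closes this case.

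Together these two estimates give exactly the moment-wise bound displayed above, and hence $p^m\mu\in\bDso+p^M\wt{\K}_0(\bAr)$.  The main obstacle will be verifying that the Lucas-theorem combinatorics of the first regime transfer correctly under the rescaling $M\mapsto M/c_p$: one must check that the congruence $r\equiv a-1\pmod{p-1}$ still obstructs the base-$p$ digits of $r$ from simultaneously dominating those of $ap^m-1$, now within the slightly enlarged window $r\leq M/c_p<p^{m+1}$.  Once this combinatorial step is verified, everything else is straightforward book-keeping that follows the template of Lemma~\ref{lemma:denoms} essentially line-by-line.
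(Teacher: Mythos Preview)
Your proposal is correct and follows essentially the same route as the paper's proof: both reduce to showing $p^m\cdot\frac{\eta_{j-1}}{j}\in\bDso+p^M\wt{\K}_0(\bAr)$ via the explicit solution formula (valid since $\autv{1}{0}{m}=1$), split into the regimes $r\leq M/c_p$ and $r>M/c_p$, and then invoke the Lucas/Clausen--von~Staudt combinatorics of Lemma~\ref{lemma:denoms} with $M$ replaced by $M/c_p$. The paper is in fact terser than you are, simply asserting that ``analyzing these two cases then follows exactly as in Lemma~\ref{lemma:denoms}''; your identification of the digit-comparison step as the point requiring care is accurate, and it does go through because $m=\lfloor\log(M/c_p+1)/\log p\rfloor$ still forces $r<p^{m+1}-1$ in the first regime.
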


\begin{proof}
The proof proceeds nearly the same as in Lemma \ref{lemma:denoms}.  Indeed, we still have an explicit solution to the difference equation, and so it suffices to see that
$$
p^m \cdot \frac{\eta_{j-1}}{j} \in \bDso + p^M \wt{K}_0(R).
$$
By definition, this means we need to check that for $r \leq M/c_p$, we have
\begin{equation}
p^m \cdot \frac{\eta_{j-1}}{j} (z^r) \in \Zp,
\end{equation}
and for $r > M/c_p$, we have
\begin{equation}
p^{rc_p-M+m} \cdot \frac{\eta_{j-1}}{j} (z^r) \in \Zp.
\end{equation}
Analyzing these two cases then follows exactly as in Lemma \ref{lemma:denoms}.  
\end{proof}

Thus, by scaling our distributions by a small power of $p$ we will be able to solve the difference equation in these finite approximation modules.  

\begin{cor}
\label{cor:filtdifffam}
Let $\ol{\nu} \in \wt{\F}(M)$ have total measure 0.  Then there exists $\ol{\mu} \in \wt{\F}(M)$ such that
$$
\ol{\mu} | \Delta = p^m \ol{\nu}
$$
where $m = \left \lfloor \frac{\log(M/c_p+1)}{\log p} \right\rfloor$.
\end{cor}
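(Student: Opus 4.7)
The plan is to mimic the proof of Corollary \ref{cor:filtdiff} in the fixed-weight case, replacing Lemma \ref{lemma:denoms} with its family version, Lemma \ref{lem:diffdenoms}, and interpreting the approximation modules via the alternative description given in Section \ref{sec:denoms}.

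First, I would lift $\ol{\nu}$ to an element $\nu \in \bDsc$. Since $\ol{\nu}$ has total measure zero in $\wt{\F}(M) = \bDsc/\wt{\Fil}^M(\bDsc)$, any such lift has $\nu(\mathbf{1}) \in p^M \bArc$ (the $0$-th moment condition in $\wt{\Fil}^M$). By subtracting from $\nu$ a suitable $\bArc$-multiple of a distribution with total measure $1$ and all higher moments zero (which lies in $\wt{\Fil}^M(\bDsc)$ because of the valuation condition on its $0$-th moment), I can arrange $\nu(\mathbf{1}) = 0$ while keeping the image of $\nu$ equal to $\ol{\nu}$.

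Next, by Lemma \ref{lemma:diff} applied to $\Docs$, there exists a unique $\mu \in \Docs$ with $\mu \mid \Delta = \nu$. By Lemma \ref{lem:diffdenoms} applied to this pair, we have
\[
p^m \mu \in \bDso + p^M \wt{\K}_0(\bAr),
\]
with $m = \lfloor \log(M/c_p + 1)/\log p \rfloor$ as in the statement. By the isomorphism established at the end of Section \ref{sec:denoms},
\[
\frac{\bDso + p^M \wt{\K}_0(\bAr)}{p^M \wt{\K}_0(\bAr)} \cong \frac{\bDso}{\wt{\Fil}^M(\bDso)} = \wt{\F}(M),
\]
and this isomorphism is $\sigop$-equivariant, hence in particular commutes with the action of $\Delta = \smatrix{1}{1}{0}{1} - 1$. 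So I can project $p^m \mu$ through this isomorphism to obtain $\ol{\mu} \in \wt{\F}(M)$.

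Finally, applying $\Delta$ to $\ol{\mu}$ and using the $\sigop$-equivariance, I get $\ol{\mu} \mid \Delta$ equals the image of $p^m \mu \mid \Delta = p^m \nu$ in $\wt{\F}(M)$, which is precisely $p^m \ol{\nu}$. The only subtle points are (i) verifying that the initial lift can be adjusted to have total measure zero (needed in order to invoke Lemma \ref{lemma:diff}), and (ii) ensuring that $p^m \mu$ actually belongs to the module in which the projection is defined — both of which are handled by the preceding lemmas. The main technical input that makes everything go through is Lemma \ref{lem:diffdenoms}, whose proof itself hinges on the valuation bounds imported from the fixed-weight analysis in Lemma \ref{lemma:denoms}.
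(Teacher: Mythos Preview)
Your proposal is correct and follows exactly the approach the paper intends: the paper's own proof simply reads ``The proof follows verbatim as in Corollary \ref{cor:filtdiff},'' and you have spelled out precisely that verbatim argument, replacing Lemma \ref{lemma:denoms} by Lemma \ref{lem:diffdenoms} and the $\K_0$-description of $\F_k(M)$ by the $\wt{\K}_0(\bAr)$-description of $\wt{\F}(M)$ from Section \ref{sec:denoms}. Your extra justification for adjusting the lift to have total measure zero is a helpful detail the paper leaves implicit.
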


\begin{proof}
The proof follows verbatim as in Corollary \ref{cor:filtdiff}.
\end{proof}

Just as in the fixed weight case, the solution to the difference equation is not unique in $\wt{\F}(M)$.  Unfortunately, the analogue of Lemma \ref{lemma:kerdiffeqn} (which describes part of the kernel of $\Delta$) does not quite hold in families.  We instead just state a slightly weaker version of Proposition \ref{prop:solvediff} in families.

\begin{prop}
\label{prop:explicitfinitediff}
Take  $\ol{\nu} \in \wt{\F}(M)$ with total measure zero, and set $m = \left\lfloor \frac{\log(M/c_p)}{\log p} \right\rfloor$.  Define $\ol{\mu} \in \wt{\F}(M-1)$ by
$$
\ol{\mu}(z^r) = \sum_{j=1}^{r+1} \frac{p^m \ol{\nu}(z^j)}{j} \binom{r}{j-1} b_{r+1-j}
$$
for $0 \leq r < M/c_p-1$. Then $\ol{\mu} \big| \Delta = p^m \ol{\nu}$ in $\wt{\F}(M-1)$.
\end{prop}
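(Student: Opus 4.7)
The plan is to mirror the proof of Proposition \ref{prop:solvediff} in the family setting, using Lemma \ref{lem:diffdenoms} in place of Lemma \ref{lemma:denoms}. Concretely, I will (i) lift $\ol{\nu}$ to a family $\nu \in \bDsc$ of total measure zero, (ii) invoke Lemma \ref{lemma:diff} to produce the unique $\mu \in \Docs$ with $\mu \big| \Delta = \nu$, together with the explicit Bernoulli formula from \cite[Theorem 4.5]{PS1} which, as noted after Lemma \ref{lemma:diff}, carries over verbatim to families, (iii) show that $p^m \mu$ lies in $\bDso + p^{M-1}\wt{\K}_0(\bAr)$ so that its image in $\wt{\F}(M-1)$ is well-defined, and (iv) identify this image with the formula defining $\ol{\mu}$. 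Since $\mu \big| \Delta = \nu$ implies $p^m\mu \big| \Delta = p^m \nu$, reducing modulo $\wt{\Fil}^{M-1}(\bDso)$ will yield $\ol{\mu}\big|\Delta = p^m \ol{\nu}$ in $\wt{\F}(M-1)$.

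For step (iii), I would apply Lemma \ref{lem:diffdenoms} with $M$ replaced by $M-1$: this produces an exponent $m' = \lfloor \log_p((M-1)/c_p + 1) \rfloor$ with $p^{m'} \mu \in \bDso + p^{M-1}\wt{\K}_0(\bAr)$. Because $(M-1+c_p)/c_p < M/c_p$, we have $m' \leq m = \lfloor \log_p(M/c_p) \rfloor$, so the stated $m$ also suffices, and projection is then available via the $\sigop$-isomorphism
\[
\frac{\bDso + p^{M-1}\wt{\K}_0(\bAr)}{p^{M-1}\wt{\K}_0(\bAr)} \;\cong\; \frac{\bDso}{\wt{\Fil}^{M-1}(\bDso)}.
\]
For step (iv), the explicit formula gives $p^m \mu(z^r) = \sum_{j=1}^{r+1} \frac{p^m \nu(z^j)}{j} \binom{r}{j-1} b_{r+1-j}$, which literally matches the definition of $\ol{\mu}(z^r)$ once $\nu$ is replaced by its reduction $\ol{\nu}$; one only needs to check that each coefficient $\frac{p^m}{j}\binom{r}{j-1} b_{r+1-j}$ has valuation at least $-1 - (r-j)c_p$ so that the product with a representative of $\ol{\nu}(z^j) \pmod{p^{M - jc_p}}$ is well-defined modulo $p^{(M-1) - rc_p}$.

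The main obstacle is the $p$-adic bookkeeping in this last verification, but it follows the same pattern as in the proof of Lemma \ref{lem:diffdenoms} (which itself mirrors Lemma \ref{lemma:denoms}): the case $\ord_p(j) > m$ is excluded because $j \leq r+1 < M/c_p < p^{m+1}$; Clausen--von Staudt bounds $\ord_p(b_{r+1-j}) \geq -1$ with equality only when $(p-1) \mid (r+1-j)$, and in the exceptional residue class the combinatorial argument via Lucas' theorem from the proof of Lemma \ref{lemma:denoms} shows the binomial coefficient absorbs the extra $p$; finally, the boundary case $j = r+1$ is handled by $b_0 = 1$ together with $p^m/(r+1) \in \Zp$, which again follows from $r+1 < p^{m+1}$. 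Thus no new idea is required beyond reusing the valuation estimates already established for Lemma \ref{lem:diffdenoms}.
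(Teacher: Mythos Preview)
Your proposal is correct and follows essentially the same approach as the paper: lift $\ol{\nu}$ to a total-measure-zero distribution, solve the difference equation via the explicit Bernoulli formula, invoke Lemma~\ref{lem:diffdenoms} to land in $\bDso + p^{M-1}\wt{\K}_0(\bAr)$, and project. The paper organizes this slightly differently---it first verifies the coefficient $\frac{p^m}{j}\binom{r}{j-1}b_{r+1-j}$ lies in $\Zp$ (citing the proof of Lemma~\ref{lemma:denoms}) to get well-definedness directly, whereas you state the weaker threshold $-1-(r-j)c_p$ and then rederive integrality via the same Lucas/Clausen--von~Staudt argument; both work, though the paper's route is a bit more direct.
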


\begin{proof}
To see that the above formulas yield a well-defined element of $\wt{\F}(M-1)$, note that
\[\frac{p^m}{j} \binom{r}{j-1} b_{r+1-j} \in \Zp
\]
by the proof of Lemma \ref{lemma:denoms}.  Further, $\ol{\nu}(z^j)$ is well-defined modulo $p^{\lceil M-jc_p\rceil}$.  From the above formula, we then see that $\ol{\mu}(z^r)$ is well-defined modulo $p^{\lceil M-(r+1)c_p\rceil} = p^{\lceil M-c_p -r c_p\rceil}$.  Hence, $\ol{\mu}(z^r)$
 is well-defined modulo $p^{\lceil M-1-r c_p\rceil}$, and $\ol{\mu}$ is a well-defined element of $\F(M-1)$.  

Next, let $\nu$ denote any lift of $\ol{\nu}$ to $\bDso$ with total measure 0, and let $\mu \in \Docs$ be the unique distribution satisfying $\mu \big| \Delta = p^m \nu$ (by Lemma \ref{lemma:diff}).  Then the image of $\mu$ in $\wt{\F}(M-1)$ equals $\ol{\mu}$ since the explicit formulas defining $\mu$ exactly match the formulas defining $\ol{\mu}$ in this proposition.  By Lemma \ref{lem:diffdenoms}, $\mu \in \bDso + p^{M-1} \wt{\K}_0(R)$, and thus projecting to $\wt{\F}(M-1)$ gives the desired result. 
\end{proof}

\subsection{The ordinary subspace of $\MSo{\wt{\F}(M)}$}

Since $\wt{\F}(M)$ is defined by taking the reduction modulo various powers of $p$, the space $\MSo{\wt{\F}(M)}$ has the potential to have a complicated structure even as a $\Zp$-module.    However, if we restrict to the ordinary subspace, the following proposition proves that passing to the $M$-th approximation module is equivalent to reducing modulo $p^M$.

\begin{prop}
\label{prop:ordfinite}
The natural map $\bDsc \to \wt{\F}(M)$ induces an isomorphism
$$
\MSord \otimes \Z/p^M \Z \stackrel{\sim}{\lra} \MSo{\wt{\F}(M)}^{\ord},
$$
and thus
$$
\MSord \otimes \Lambda/(p^M,w^L)\Lambda\stackrel{\sim}{\lra} \MSo{\wt{\F}(M,L)}^{\ord}.
$$
In particular, $\MSo{\wt{\F}(M,L)}^{\ord}$ is a free $\Zp[w]/(p^M,w^L)$-module, and for $T$ a Hecke operator
$$
\chr(T ~|~\MSord) \equiv 
\chr(T ~|~\MSo{\wt{\F}(M,L)}^{\ord}) \mod{p^M,w^L}.
$$
\end{prop}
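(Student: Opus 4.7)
The plan is to reduce everything to one key computation: that the Hecke operator $U_p$ maps $\wt{\Fil}^M(\bDsc)$ into $p^M\bDsc$. To see this, note that $\autv{1}{0}{m}(z,w)=1$ by Lemma \ref{lemma:Kprops}, so for $\mu \in \wt{\Fil}^M(\bDsc)$ and any coset matrix $\smatrix{1}{a}{0}{p}$ defining $U_p$,
\[
\left(\mu\,\Big|\,\smatrix{1}{a}{0}{p}\right)(z^j) \;=\; \mu((a+pz)^j) \;=\; \sum_{n=0}^{j}\binom{j}{n}a^{j-n}p^n\mu(z^n).
\]
Since by hypothesis $\ord_p(\mu(z^n)) \geq M - nc_p$, each summand has $\ord_p \geq M + n(1-c_p) = M + n/(p-1) \geq M$. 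Summing over $a$ shows $\mu\,|\,U_p \in p^M\bDsc$, as claimed.

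Given this, injectivity of the first map is immediate. If $\Phi \in \MSord$ has image $0$ in $\MSo{\wt{\F}(M)}$, then $\Phi$ takes values in $\wt{\Fil}^M(\bDsc)$, so $\Phi\,|\,U_p$ takes values in $p^M\bDsc$. Ordinariness gives $\Phi = e\Phi$, and $\Lambda$-linearity of $e=\lim U_p^{n!}$ yields $\Phi\,|\,U_p = e(\Phi\,|\,U_p) \in p^M\MSord$. Since $U_p$ is invertible on $\MSord$, we conclude $\Phi \in p^M\MSord$.

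For surjectivity, the same key computation shows that $U_p$ acts as zero on the quotient $\wt{\Fil}^M(\bDsc)/p^M\bDsc$, and hence so does $e$. Combined with the short exact sequence of $\sigop$-modules
\[
0 \to \wt{\Fil}^M(\bDsc)/p^M\bDsc \to \bDsc/p^M\bDsc \to \wt{\F}(M) \to 0,
\]
the associated long exact sequence in modular symbols has connecting terms with coefficients in $\wt{\Fil}^M(\bDsc)/p^M\bDsc$; by functoriality these still carry the trivial $U_p$-action, so the ordinary projector annihilates them. Applying $e$ yields a surjection $\MSo{\bDsc/p^M\bDsc}^{\ord} \twoheadrightarrow \MSo{\wt{\F}(M)}^{\ord}$, and since $e$ commutes with reduction mod $p^M$, the left side is $\MSord/p^M$. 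Combined with injectivity this establishes the first isomorphism.

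The second isomorphism follows by tensoring with $\Lambda/w^L\Lambda$ and using that $\MSord$ is a free $\Lambda$-module by Theorem \ref{thm:extend}; freeness of $\MSo{\wt{\F}(M,L)}^{\ord}$ over $\Zp[w]/(p^M,w^L)$ and the congruence of characteristic polynomials then follow immediately from this freeness and the Hecke-equivariance of the isomorphism. The main obstacle I expect is making the ``$\Ext^1$ vanishes on ordinary parts'' step rigorous: it requires identifying $\MSo{\cdot}$ with a group-cohomology-type functor on $\Gamma_0$-modules and checking $U_p$-equivariance of the connecting map. If this proves technically awkward, an alternative is a direct lifting argument: pick a set-theoretic lift of $\bar\Phi$'s values on the generators $D_1,\dots,D_t$ of Proposition \ref{prop:msexist}, observe that the resulting ``Manin relation defect'' takes values in $\wt{\Fil}^M/p^M\bDsc$, and use that $U_p$ kills this defect to iteratively correct the lift, the process converging by $p$-adic completeness of $\MSo{\bDsc}$.
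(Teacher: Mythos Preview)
Your key computation---that $U_p$ maps $\wt{\Fil}^M(\bDsc)$ into $p^M\bDsc$---is correct, and it yields a cleaner one-step injectivity argument than the paper's. The paper instead argues iteratively: if $\Phi$ takes values in $\wt{\Fil}^M$ and $p^r\Vert\Phi$ with $r<M$, then $\Psi:=p^{-r}\Phi$ has norm $1$ and values in $\wt{\Fil}^{M-r}$, so in particular the total measures of its values lie in $p\Lambda$; then (essentially the $n=0$ term of your computation, as in Proposition~\ref{prop:vtm}) one gets $\|\Psi|U_p\|<1$, contradicting ordinariness.

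There is, however, a real gap in your surjectivity argument. Your long-exact-sequence step correctly gives a surjection (in fact an isomorphism) $\MSo{\bDsc/p^M}^{\ord}\twoheadrightarrow\MSo{\wt{\F}(M)}^{\ord}$, since $U_p$ annihilates the coefficient module $\wt{\Fil}^M/p^M$ and hence all cohomology with those coefficients. But the claim ``since $e$ commutes with reduction mod $p^M$, the left side is $\MSord/p^M$'' is not justified: commutation of $e$ with reduction only produces a map $\MSord/p^M\to\MSo{\bDsc/p^M}^{\ord}$, not its surjectivity. What is missing is the vanishing of $H^2_c(\Gamma_0,\bDsc)^{\ord}\cong\big((\bDsc)_{\Gamma_0}\big)^{\ord}$, and your key computation does \emph{not} give this, because $U_p$ is not topologically nilpotent on $\bDsc$ itself. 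The paper sidesteps your extra step by working directly with the sequence $0\to\wt{\Fil}^M\to\bDsc\to\wt{\F}(M)\to 0$ and showing $H^2_c(\Gamma_0,\wt{\Fil}^M)^{\ord}=0$ via the inclusion $\big((\wt{\Fil}^M)^{\ord}\big)_{\Gamma_0}\subseteq\big((\bDsc)^{\ord}\big)_{\Gamma_0}$, the latter vanishing by a separate direct computation (carried out in the proof of Lemma~\ref{lemma:compOMS}). That same vanishing is precisely what you need to fill your gap, so your route ultimately requires the same external input as the paper's---it is just one short exact sequence longer. Your proposed fallback direct-lifting argument is too sketchy to assess as written; in particular it is unclear how ``$U_p$ kills the defect'' produces a corrected lift of $\bar\Phi$ rather than of $\bar\Phi|U_p$.
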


\begin{proof}
We first show that 
$$
\MSord  \to \MSo{\wt{\F}(M)}^{\ord}
$$
is surjective.  To this end, consider the exact sequence
$$
0 \lra \wt{\Fil}^M(\bDo) \ot \bArc \lra \bDo \ot \bArc \lra \wt{\F}(M) \lra 0.
$$
Identifying $\MSo{V}$ with $H^1_c(\Gamma_0,V)$ and invoking the long exact sequence for cohomology, it suffices to show that $H^2_c(\Gamma_0,\wt{\Fil}^M(\bDo) \ot \bArc)^{\ord} = 0$.  But
$$
H^2_c(\Gamma_0,\wt{\Fil}^M(\bDo) \ot \bArc) \cong H_0(\Gamma_0,\wt{\Fil}^M(\bDo) \ot \bArc) \cong \left(\wt{\Fil}^M(\bDo) \ot \bArc\right)_{\Gamma_0}
$$
and
$$
\left(\left(\wt{\Fil}^M(\bDo) \ot \bArc\right)_{\Gamma_0}\right)^{\ord}
\cong
\left(\left(\wt{\Fil}^M(\bDo) \ot \bArc\right)^{\ord}\right)_{\Gamma_0}
\subseteq
\left(\left(\bDo \ot \bArc\right)^{\ord}\right)_{\Gamma_0}
$$
which vanishes as in the proof of Lemma \ref{lemma:compOMS}.

To check injectivity of the map of this proposition,  we must take 
$\Phi  \in \MSord $ which takes values in $\wt{\Fil}(M)$, and show that $\Phi$ is divisible by $p^M$.  To this end, take the largest possible $r$ such that $p^r$ divides $\Phi$, and assume that $r<M$.  Then $\Psi := p^{-r} \Phi$ has size 1 and takes values in
$$
p^{-r} \left( \wt{\Fil}(M) \cap p^r \bDsc \right) \subseteq \wt{\Fil}(M-r).
$$
But since $M-r>0$, this means that the total measure of each value of $\Psi$ is divisible by $p$.  Arguing as in Proposition \ref{prop:vtm}, we then have that $||\Psi | U_p|| <1$.  This is a contradiction since $\Psi$ is in the ordinary subspace.

The remainder of the proposition all follows formally from the first claim and the fact that $\MSo{\bDsc}$ is a free $\bArc$-module.
\end{proof}

\subsection{Vector of total measures}

As before, fix $D_1, \dots, D_t \in \Delta_0$ which generate $\Delta_0$ as a $\Zp[\Gamma_0]$-module.  We again define a {\it vector of total measures} map, but now for $\wt{\F}(M,L)$-valued symbols.
Set
$$
\alpha_{M,L} : \MSo{\wt{\F}(M,L)}^{\ord} \lra \left( \bArc/(p^M,w^L)\bArc \right)^t 
$$
defined by sending $\Phi$ to the vector $\left( \Phi(D_i)({\bf 1}) \right)_{i=1}^t$.   

\begin{prop}
\label{prop:vtmfinite}
The map $\alpha_{M,L}$ is injective.
\end{prop}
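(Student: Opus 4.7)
The plan is to reduce this to the already-established injectivity of $\alpha$ from Proposition~\ref{prop:vtm}, using Proposition~\ref{prop:ordfinite} to pass between $\MSo{\wt{\F}(M,L)}^{\ord}$ and $\MSord \otimes_{\bArc} \bArc/(p^M,w^L)$. First I would observe that under the isomorphism
$$
\MSord \otimes_{\bArc} \bArc/(p^M,w^L)\bArc \stackrel{\sim}{\lra} \MSo{\wt{\F}(M,L)}^{\ord}
$$
of Proposition~\ref{prop:ordfinite}, the map $\alpha_{M,L}$ is literally the reduction modulo $(p^M,w^L)$ of the map $\alpha:\MSord \to \bArc^t$ from Section~\ref{sec:vtm}. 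So it suffices to show that $\alpha$ remains injective after tensoring with $\bArc/(p^M,w^L)$, for which it is enough to show that $\alpha$ is a \emph{split} $\bArc$-module injection.

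To establish splitness, I would leverage Proposition~\ref{prop:vtm}(3), which says that the mod-$\m$ reduction $\overline{\alpha}:\MSord\otimes\bArc/\m \to (\bArc/\m)^t\cong\Fp^t$ is injective. By Theorem~\ref{thm:extend}, $\MSord$ is a free $\bArc$-module, of some rank $d$; so $\alpha$ is represented by a $t \times d$ matrix $A$ over $\bArc$. Injectivity of $\overline{\alpha}$ says that $\overline{A}$ has rank $d$ over $\Fp$, so there is some $d$-element subset $S$ of the rows of $\overline{A}$ whose corresponding $d\times d$ minor $\overline{B}$ is invertible over $\Fp$. Lifting back, the corresponding minor $B$ of $A$ has determinant a unit modulo $\m$. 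Since $\bArc = \Zp[[w]]$ is local with maximal ideal $\m$, this determinant is a unit in $\bArc$, so $B\in \GL_d(\bArc)$. Composing the projection $\bArc^t \twoheadrightarrow \bArc^d$ onto the coordinates in $S$ with $B^{-1}$ then gives an explicit $\bArc$-linear retraction of $\alpha$.

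From splitness, $\coker(\alpha)$ is a direct summand of the free module $\bArc^t$, hence projective and in particular flat over $\bArc$. Therefore the short exact sequence
$$
0 \lra \MSord \xrightarrow{\alpha} \bArc^t \lra \coker(\alpha) \lra 0
$$
remains exact after applying $-\otimes_{\bArc} \bArc/(p^M,w^L)$, which yields the injectivity of $\alpha_{M,L}$.

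The only step requiring attention is checking that $\alpha_{M,L}$ really is the reduction of $\alpha$ under the identification of Proposition~\ref{prop:ordfinite} — this follows because both sides send a symbol $\Phi$ to the tuple of total measures $(\Phi(D_i)(\mathbf{1}))_{i=1}^t$, and evaluation of total measure commutes with the specialization map. The conceptual heart of the argument is recognizing that Proposition~\ref{prop:vtm}(3) upgrades the injectivity statement of Proposition~\ref{prop:vtm}(1) to a \emph{split} injectivity, via the fact that $\bArc$ is local; after that, the proposition follows formally.
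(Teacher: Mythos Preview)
Your proof is correct and follows essentially the same approach as the paper: both identify $\alpha_{M,L}$ with the reduction of $\alpha$ via Proposition~\ref{prop:ordfinite} and then invoke the mod-$\m$ injectivity of $\overline{\alpha}$ from Proposition~\ref{prop:vtm}(3). The paper phrases the conclusion as a terse Nakayama argument (``$K \otimes \Lambda/\m = 0$, hence $K=0$''), whereas you make the underlying mechanism explicit by exhibiting a unit $d\times d$ minor and hence a $\Lambda$-linear retraction of $\alpha$; this is exactly what is needed to justify the paper's Nakayama step, so the two arguments are the same in substance.
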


\begin{proof}
By Proposition \ref{prop:ordfinite}, $\MSo{\wt{\F}(M,L)}^{\ord}$ is simply the reduction of $\MSord$ modulo $(p^M,w^L)\Lambda$.  Likewise, $\alpha_{M,L}$ is simply the reduction of $\alpha$ (from section \ref{sec:vtm}) modulo $(p^M,w^L) \Lambda$.  Thus, if $K$ is the kernel of $\alpha_{M,L}$, by Proposition \ref{prop:vtm}, we have $K \otimes \Lambda / \m = 0$.  Thus, $K = 0$ and $\alpha_{M,L}$ is injective.
\end{proof}

\begin{cor}
\label{cor:extendfinite}
We have $\left\{ \Phi_1^{\ord}, \dots, \Phi_j^{\ord} \right\}$ is the start of $\bArc / (p^M,w^L) \bArc$-basis of $\MSo{\wt{\F}(M,L)}^{\ord}$ if and only if $\left\{ \alpha_{1,1}(\Phi_1^{\ord}), \dots, \alpha_{1,1}(\Phi_j^{\ord}) \right\}$ is a linearly independent set in $(\Lambda / (p,w) \Lambda)^t \cong \Fp^t$.
\end{cor}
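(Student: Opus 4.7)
The plan is to mimic the argument of Corollary~\ref{cor:extend}, transported to the finite approximation setting by means of Proposition~\ref{prop:ordfinite} and Proposition~\ref{prop:vtmfinite}.

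First, I would observe that by Theorem~\ref{thm:extend}, $\MSord$ is free over $\bArc$, and so by Proposition~\ref{prop:ordfinite}, $\MSo{\wt{\F}(M,L)}^{\ord} \cong \MSord \otimes_{\bArc}\bArc/(p^M,w^L)$ is free over the local ring $A := \bArc/(p^M,w^L)\bArc$. Its unique maximal ideal is $\m' := (p,w)A$, with residue field $A/\m' \cong \Fp$. The compact (or indeed ordinary, since $A$ is Noetherian here) form of Nakayama's lemma then says that $\{\Phi_1^{\ord},\dots,\Phi_j^{\ord}\}$ is the start of an $A$-basis of $\MSo{\wt{\F}(M,L)}^{\ord}$ if and only if their images in $\MSo{\wt{\F}(M,L)}^{\ord}/\m'\MSo{\wt{\F}(M,L)}^{\ord}$ are $\Fp$-linearly independent.

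Next I would identify that quotient. Reducing first modulo $(p^M,w^L)$ and then modulo $(p,w)$ is the same as reducing $\MSord$ directly modulo $\m = (p,w)\Lambda$, so
\[
\MSo{\wt{\F}(M,L)}^{\ord}/\m'\MSo{\wt{\F}(M,L)}^{\ord} \cong \MSord \otimes_{\Lambda}\Lambda/\m.
\]
Under this identification, the reduction of $\alpha_{M,L}$ modulo $\m'$ is precisely $\overline{\alpha}$ from Proposition~\ref{prop:vtm}, which equals $\alpha_{1,1}$ after passing through the isomorphism above.

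Finally, by Proposition~\ref{prop:vtm}\eqref{item:3} (or equivalently, by Proposition~\ref{prop:vtmfinite} applied with $M=L=1$), the map $\alpha_{1,1}$ is injective. Hence, $\Fp$-linear independence of $\{\Phi_1^{\ord} \bmod \m',\dots,\Phi_j^{\ord} \bmod \m'\}$ inside $\MSord\otimes \Lambda/\m$ is equivalent to linear independence of their images $\{\alpha_{1,1}(\Phi_1^{\ord}),\dots,\alpha_{1,1}(\Phi_j^{\ord})\}$ inside $\Fp^t$. Combining this with the Nakayama equivalence above yields the corollary.

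There is no real obstacle here: the only subtle point is to be careful with the iterated quotienting, namely that passing from $\bArc$ to $A$ and then to $A/\m'$ commutes with the relevant tensor products. This is immediate because $\MSord$ is free over $\bArc$, which is why the hypothesis of Proposition~\ref{prop:ordfinite} (and hence Theorem~\ref{thm:extend}) is what makes the whole reduction step clean.
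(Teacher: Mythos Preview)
Your proof is correct and follows essentially the same approach as the paper: the paper's proof simply states that the argument of Corollary~\ref{cor:extend} applies verbatim, invoking Proposition~\ref{prop:vtmfinite} in place of Proposition~\ref{prop:vtm}. You have spelled out that argument in full detail, including the freeness over $A=\Lambda/(p^M,w^L)$ from Proposition~\ref{prop:ordfinite} and the identification of the successive quotients, which is exactly what the one-line proof in the paper is implicitly relying on.
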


\begin{proof}
The same argument in Corollary \ref{cor:extend} applies (invoking Proposition \ref{prop:vtmfinite} instead of Proposition \ref{prop:vtm}).
\end{proof}




\subsection{Characteristic polynomials of Hecke operators}

In section \ref{sec:charpolyhecke}, we sketched a method of computing the characteristic polynomials of Hecke operators acting on $\MSord$.  In this section, we explain how to carry this method out in practice in the finite spaces $\MSo{\wt{\F}(M,L)}^{\ord}$.  Recall that by Proposition \ref{prop:ordfinite}, for $T$ a Hecke operator
$$
\chr(T ~|~\MSord) \equiv 
\chr(T ~|~\MSo{\wt{\F}(M,L)}^{\ord}) \mod{p^M,w^L}.
$$
Thus, we can (in theory) recover the true characteristic polynomials to any degree of accuracy by taking $M$ and $L$ large enough.

The method of section \ref{sec:charpolyhecke} to form a basis $\MSord$ was to produce ``random" elements of $\MSord$ until one was the start of a $\Lambda$-basis of the space.  Then produce random elements until one has two elements forming the start of a $\Lambda$-basis.  Continue this until we have a full basis (whose size we know as in footnote \ref{footnote:hida}).  

To carry this method out in $\MSo{\wt{\F}(M,L)}^{\ord}$, we note that we can form elements in  $\MSo{\wt{\F}(M,L)}$ as described in section \ref{sec:random}.  Note that this requires solving the difference equation in  $\wt{\F}(M,L)$ which is done explicitly in Proposition \ref{prop:explicitfinitediff}.  To form elements of $\MSo{\wt{\F}(M,L)}^{\ord}$ one then just needs to iterate the $U_p$-operator.\footnote{To verify if a symbol $\Phi$ is actually in the ordinary subspace, one looks at $\Phi$, $\Phi | U_p$, $\Phi |U_p^2$, $\dots$, until there is a relation.}

Further, to determine when elements of $\MSo{\wt{\F}(M,L)}^{\ord}$  are the start of a $\Lambda/ (p^M,w^L) \Lambda$-basis, we can invoke Corollary \ref{cor:extendfinite} and examine the associated vectors of total measures modulo $(p,w)\Lambda$.  If these vectors are linearly independent over $\Fp$, then the original elements are the start of a basis.

Lastly, if we have a basis $\B = \{\Phi_1^{\ord}, \dots , \Phi_d^{\ord}\}$ of $\MSo{\wt{\F}(M,L)}^{\ord}$ over $\Lambda/(p^M,w^L)$ in hand, we describe now how to compute the matrix of a Hecke operator $T$ with respect to $\B$ (which in particular gives the characteristic polynomial of $T$).  To this end, we have
\begin{equation}
\label{eqn:hecke1}
\Phi_i | T = \sum_j a_{ij} \cdot \Phi_j
\end{equation}
for some $a_{ij} \in \Lambda / (p^M,w^L)\Lambda$, and our job is to find the $a_{ij}$.  Applying $\alpha_{M,L}$, the vector of total measures map, we get
\begin{equation}
\label{eqn:hecke2}
\alpha_{M,L}(\Phi_i | T) = \sum_j a_{ij} \cdot \alpha_{M,L}(\Phi_j)
\end{equation}
Since $\alpha_{M,L}$ is injective (Proposition \ref{prop:vtmfinite}), any solutions to \eqref{eqn:hecke2} will also be solutions to \eqref{eqn:hecke1}.  Thus we have reduced our question to solving linear equations over $\Lambda / (p^M,w^L) \Lambda$.  

Since the maximal ideal of $\Lambda / (p^M,w^L) \Lambda$ is not principal, solving linear equations over this ring is not as simple as over say $\Z/p^M\Z$.  So we include here at least  a few words about how one can do this.  Assume we have a consistent system of linear equations over $\Lambda / (p^M,w^L) \Lambda$:
$$
\sum_i a_i(w) \cdot {\bf v_i}(w) = {\bf u}(w);
$$
that is, the ${\bf v}_i(w)$ and ${\bf u}(w)$ in $\left(\Lambda / (p^M,w^L) \Lambda\right)^t$ are given and we must find $a_i(w)$ in $\Lambda / (p^M,w^L) \Lambda$ solving this equation.  Evaluating at $w=0$ yields
$$
\sum_i a_i(0) \cdot {\bf v_i}(0) = {\bf u}(0),
$$
which is a consistent system over $\Z/p^M\Z$.  Standard methods then gives us the values of $a_i(0)$ for each $i$.  Then differentiating and evaluating at $w=0$ gives:
$$
\sum_i a'_i(0) \cdot {\bf v_i}(0)  = {\bf u}'(0) - \sum_i a_i(0) \cdot {\bf v_i}'(0)
$$
This is another system of linear equations over $\Z/p^M\Z$ (with the $a'_i(0)$ as the unknowns) which we can again solve.  Repeating this method gives the values of each derivative of $a_i(w)$ at $w=0$.  From this information, we can recover $a_i(w)$ for each $i$ as desired.

\section{Data and examples}
In this section, we describe some sample computations that we have carried out with the algorithms implemented as part of this project. Specifically, we include computations of formal $q$-expansions of Hida families, the structure of Hida--Hecke algebras, $L$-invariants of modular forms and their symmetric squares, and  two-variable $p$-adic $L$-functions of Hida families. For the sake of presentation, we have elided some of the data in this section; the full data is presented in Appendix~\ref{DataAppendix}.

We briefly indicate about how much time and space each example took to compute on a `member server' on the SageMathCloud in Sage 5.11. These are meant to be ballpark estimates; for instance, we ran example~\ref{ex:X011_qexp} several times taking between 6 minutes and 10 minutes with this difference attributed mostly to the varying load of the server.

\subsection{Examples of $q$-expansions in families}\label{sec:examples_qexp}
We may view a $p$-ordinary family of eigenforms as a formal $q$-expansion
\begin{equation}\label{eqn:formal_qexp}
	\mathcal{F}=\sum_{n\geq1}a_n(k)q^n,
\end{equation}
where the $a_n(k)$ are Iwasawa functions of the $p$-adic weight variable $k$. 

If $\mathcal{F}$ is a normalized (i.e.\ $a_1=1$) eigenform, then the $a_n(k)$ are determined by the $a_\ell(k)$, for $\ell$ prime, using the standard Hecke operator recurrence relations. We may thus obtain the $q$-expansion from knowing the $a_\ell(k)$. Furthermore, $a_\ell(k)$ is the Hecke eigenvalue of the Hecke operator $T_\ell$. Thus, if $\Phi$ is a family of Hecke eigensymbols, we may compute the corresponding $a_\ell$ by comparing $\Phi|T_\ell$ with $\Phi$. 

More precisely, since $a_\ell(k)$ is an Iwasawa function, there exist $A_\ell(W)\in\Z_p\llbracket W \rrbracket$ such that $a_\ell(k)=A_\ell((1+p)^k-1)$.  Comparing $\Phi|T_\ell$ with $\Phi$ directly yields $A_\ell(W)$, and then a simple substitution yields $a_\ell(k)$.  In fact, our computations take place in the larger ring $\Zp\llbracket w \rrbracket = \Zp\llbracket W/p \rrbracket$.  However, by Theorem \ref{thm:extend}, we know that the eigenvalues $A_\ell(w)$ must land in the subring $\Zp\llbracket W \rrbracket$.

Below, we provide the first few $a_\ell$ for some examples of families passing through specific newforms. See \S\S\ref{sec:charpolyhecke} and \ref{sec:decomp} for the method used to isolate these examples. We remind the reader that specializing the variable $k$ below to a specific non-negative integer $k_0$ gives modular forms of weight $k_0+2$.

\begin{example}\label{ex:X03:_qexp}
Let $p=5$ and $N=1$ in weights congruent to $2$.  In this case $\MSord$ is 1-dimensional and this dimension is entirely explained by the ordinary Eisenstein family.  Using the methods of section \ref{sec:charpolyhecke}, we can produce a basis of this space and since this basis is 1-dimensional, its unique element $\Phi$ is an eigensymbol.  By comparing $\Phi | T_\ell$ with $\Phi$ we can thus compute families of ordinary Eisenstein eigenvalues.  For example, we compute that:
$$
a_2(k) =  3+ \left(4p + p^{2} + 4p^{4} + p^{5} + 4p^{6} + 4p^{7} + 3p^{8}\right)k+ \cdots+ \left(4p^{8}\right)k^{9} + O(p^{9}, k^{10})
$$
We note that in this case, we have an exact formula for $a_\ell(k)$ as $a_\ell(k) = 1 + \omega(\ell)\langle\ell\rangle^{k+1}$ for $\ell \neq 5$.  Expanding as a power series, we get
$$
a_\ell(k) = 1+\omega(\ell)\sum_{n\geq0}\frac{(k+1)^n\log_p^n(\ell)}{n!},
$$
which we note matches perfectly with the computations listed above. At $\ell=p=5$, the Hecke eigenvalue is the constant $1$ in the family and our computations witness this.
\end{example}

\begin{example}\label{ex:X011_qexp}
	Let $p=11$ and let $f$ be the unique cuspidal newform of weight $2$ and level $\Gamma_0(11)$ (i.e.\ the one corresponding to the elliptic (and modular) curve $X_0(11)$). In this case, $\MSord$ is 3-dimensional.  As in the previous example, one dimension is explained by the ordinary Eisenstein family.  The other two dimensions are explained by the Hida family passing through $f$ with this family contributing one dimension to each of the plus and minus subspaces.  Since the Eisenstein family lands in the plus subspace, we focus on the minus subspace as $(\MSord)^-$ is 1-dimensional.  Again, a basis of this space is automatically an eigensymbol and from this eigensymbol we computed $a_\ell$ for $\ell\leq 11$. The full data is included in Appendix~\ref{DAq}. A sample eigenvalue is
\begin{align*}
a_{11} &= 1+ \left(8p + 2p^{2} + \cdots + p^{10}\right)k+\cdots+ \left(3p^{10}\right)k^{11} + O(p^{11}, k^{12}).
\end{align*}
Note that plugging in $k=12-2$ into the data, one can verify that the eigenvalues agree with those of (the ordinary $11$-stabilization of) the modular discriminant $\Delta$ (up to precision $11^{11}$).

This example took about 7 minutes: computing the eigenfamily took about 5.5 minutes and computing the 5 eigenvalues took about 1.5 minutes. Computing the modular symbol space and the eigenfamily used about 55MB while the eigenvalues used about 45MB.
\end{example}

\begin{example}\label{ex:15a1_qexp}
	Let $p=5$ and let $f$ be the unique newform of weight $2$ and level $\Gamma_0(15)$ (i.e.\ the one corresponding to the elliptic (and modular) curve $X_0(15)$). The minus subspace is again 1-dimensional so a basis of it is an eigensymbol from which we computed $a_\ell$ for $\ell\leq 11$. The full data is included in Appendix~\ref{DAq}. A sample eigenvalue is
\begin{align*}
a_{5} &= 1+ \left(4p + 4p^{2} + \cdots + 4p^{10}\right)k+ \cdots + \left(2p^{10}\right)k^{11} + O(p^{11}, k^{12})
\end{align*}
Plugging in $k=6-2$, one can verify that the eigenvalues agree with those of (the ordinary $5$-stabilization of) of the unique newform of weight $6$ and level $\Gamma_0(3)$ (up to precision $5^{10}$). In weight $22$ and level $\Gamma_0(3)$, there are three Galois conjugacy classes of newforms; their Hecke eigenvalue fields are $\Q, \Q$, and $\Q(\sqrt{11\cdot59})$, respectively. The first two are not $5$-ordinary. The third Galois conjugacy class is only ordinary under one of the two embeddings of $\Q(\sqrt{11\cdot59})$ into $\Q_5$. We plugged in $k=22-2$ to the computed $a_\ell$ and they agreed with the Hecke eigenvalues of this weight $22$ and level $\Gamma_0(3)$ newform (up to precision $5^{11}$).

This example took about 11 minutes: 7 minutes for the eigenfamily and 4 minutes for the 5 eigenvalues. The eigenfamily computation used 45MB and the eigenvalues used 120MB.
\end{example}

\begin{example}\label{ex:19a1_qexp}
	Let $p=5$ and let $f$ be the ordinary $5$-stabilization of the (unique) newform of weight $2$ and level $\Gamma_0(19)$ (i.e.\ the one corresponding to the elliptic (and modular) curve $X_0(19)$).  In this case, $(\MSord)^-$ is 8-dimensional.  However, if $\m$ is the maximal ideal corresponding to $f$, we have that $(\MSord_\m)^-$ is 1-dimensional.  Indeed, $a_5(f) \equiv 3 \pmod{5}$ and $a_5(g) \equiv \pm 1 \pmod{5}$ for all of the remaining eigenforms (as they are all new at 5).  In particular, the operator $T_5^2-1$ acts invertibly on $(\MSord_\m)^-$ and topologically nilpotently on $(\MSord_{\m'})^-$ for all $\m' \neq \m$.  In particular, using the methods of section \ref{sec:decomp}, we can form a basis of $(\MSord_{\m})^-$ and obtain an eigensymbol as this space is 1-dimensional. The $a_\ell$ for $\ell\leq19$ are included in Appendix~\ref{DAq}. Here is $a_5$:
\begin{align*}
a_{5} &= 3 + 3p + 3p^{2} + p^{3} + 4p^{6}+ \left(5 + 2p^{2}+\cdots+ 2p^{6}\right)k+ \cdots+ \left(3p^{5} + 2p^{6}\right)k^{6} + O(p^{7}, k^{8})
\end{align*}
In weight $6$ and level $\Gamma_0(19)$, there are $4$ Galois conjugacy classes of newforms; their Hecke eigenvalue fields are $\Q, \Q$, $\Q(\sqrt{3\cdot59})$, and $K_4$, respectively, where $K_4$ is a totally real $S_4$-quartic extension of discriminant $101148696=2^3 \cdot 3^3 \cdot 11 \cdot 42571$. All of these are ordinary at $5$. The fourth Galois conjugacy class has one embedding into $\Q_5$ (the remaining three are into $\Q_{5^3}$) and the corresponding newform is the only one whose $a_5$ is congruent to the $a_5$ of the elliptic curve $X_0(19)$. Plugging in $k=6-2$ in the computed $a_\ell$ agrees with the $q$-expansion of this newform (up to precision $5^{7}$).

This example took about 58 minutes: 35 minutes for the eigenfamily and 23 minutes for the 8 eigenvalues. The eigenfamily computation used 77MB and the eigenvalues used 719MB.
\end{example}

\begin{example}\label{ex:N95p5_qexp}
	Let $p=5$. There are two Galois conjugacy classes of weight $2$ newforms of level $\Gamma_0(95)$; their Hecke eigenvalue fields are $K_3$ and $K_4$, where $K_3$ is the unique (real) cubic field of discriminant $148$, and $K_4$ is the unique totally real quartic field of discriminant $11344$. These represent the remaining $7$ dimensions in $(\MSord)^{-}$ from the previous example. In this example, we deal with the conjugacy class with cubic Hecke eigenvalue field as its $a_5$ is $1$ (while the other conjugacy class has $a_5=-1$) and hence the forms in this class will have an $L$-invariant (studied in \S\ref{sec:Linvar} below). The field $K_3$ has only one embedding into $\Q_5$ (the remaining embeddings landing in $\Q_{5^2}$). We let $f$ be the newform corresponding to the embedding in $\Q_5$ and remark it is congruent to an Eisenstein series. We may isolate it from its two other conjugates (as in section \ref{sec:decomp}) as its $a_{61}$ is $2\mod{5}$, whereas its conjugates have $a_{61}\equiv3\mod{5}$. The $a_\ell$ for $\ell\leq11$ are included in Appendix~\ref{DAq}. Here is $a_5$, which is used to compute the $L$-invariant below:
\begin{align*}
a_{5} &= 1+ \left(2p^{2} + 3p^{5}\right)k+ \left(p^{2} + 2p^{4} + p^{5}\right)k^{2}+ \left(2p^{4} + p^{5}\right)k^{4}+ \left(2p^{5}\right)k^{5}+ \left(3p^{5}\right)k^{6} + O(p^{6}, k^{7})
\end{align*}

This example took about 4 hours and 25 minutes: 4 hours and 20 minutes for the eigenfamily and 5 minutes for the 5 eigenvalues. The eigenfamily computation used 1150MB and the eigenvalues used 110MB.

\end{example}

\begin{example}\label{ex:big_ex_qexp}
For a bigger example, consider $p=11$ with tame level $N=31$. We take $f$ to be 11-stabilization of the unique weight two newform on $\Gamma_0(31)$. This form has coefficients in $\Q$. The space $(\MSord)^-$ is 29-dimensional in this case, but its localization at the maximal ideal corresponding to $f$ is $1$-dimensional. We isolate the Hida family through $f$ by iterating the operators 
$T_2-8,U_{11}-1,U_{11}+1$. The $a_\ell$ for $\ell\leq11$ are included in Appendix~\ref{DAq}. Here is the $a_{11}$ in the family:
\[
	a_{11} = 2 + 5p + 9p^{2} + 8p^{3} + 8p^{4} + 6p^{5} + 3p^{6} + 2p^{7}+ \left(10p + \cdots + 8p^{7}\right)k+\cdots+ 10p^{7}k^{7} + O(p^{8}, k^{8})
\]

This example took  about 6 hours and 37 minutes: 6 hours and 20 minutes for the eigenfamily and 17 minutes for the 5 eigenvalues. The eigenfamily computation used 370MB and the eigenvalues used 290MB.
\end{example}


\subsection{The structure of Hida algebras}
Here, we summarize some computations of the structure of the connected components of Hida algebras.

\begin{example}
\label{ex:p=3N=11}
Let $p=3$ and $N=11$.  In this case, there are two $3$-ordinary cuspforms in any even weight.  In weight 2, one of these forms comes from the ordinary 3-stabilization of the cuspform associated to $X_0(11)$.  The other form is the unique newform of level 33, and, moreover, these two forms admit a congruence modulo 3.  
Note then $(\MSord)^-$ is 2-dimensional with $(\MSord_\m)^- \simeq (\MSord)^-$ making this example fundamentally different from the examples in section \ref{sec:examples_qexp} where we were always able to cut down to a 1-dimensional space.

Nonetheless, we know that $\T_\m$ is a $\Lambda$-algebra of rank 2 and we seek to understand its structure.  From a geometric perspective, possibilities for $\Spec(\T_\m)$ include two copies of weight space glued to together at a finite collection of points (possibly only at the point of characteristic $p$) or a ramified cover of weight space ramified at finitely many points.  

To further understand $\T_\m$, consider a Hecke operator $T_\ell$ (or $U_\ell$) acting on $(\MSord)^-$ and let $f_\ell$ denote its characteristic polynomial, which is a monic polynomial of degree 2 over $\Lambda=\Zp\llbracket W\rrbracket$.  Let $d_\ell(W) \in \Lambda$ denote the discriminant of this polynomial.  By $p$-adic Weierstrass preparation, we can write 
$$
d_\ell(W) = p^{\mu_\ell} \cdot P_\ell(W) \cdot V_\ell(W)
$$
where $P_\ell(W)$ is a distinguished polynomial of degree say $\lambda_\ell$ and $V_\ell(W)$ is a unit power series.  We can use information from this decomposition to understand $\T_\m$.  For example, if $\lambda_\ell$ is odd, then $\Lambda[T_\ell]$ is a ramified extension of $\Lambda$ forcing $\T_\m / \Lambda$ to be ramified.

Using the methods outlined in section \ref{sec:charpolyhecke}, we computed approximations to the characteristic polynomials $f_\ell$ described above.  For example, for $\ell = 2$, we computed this discriminant to be:
\begin{multline*}
d_2(W) = p^{2} + O(p^{11}) + \left(2 + 2p + 2 p^{2} + 2 p^{4} + p^{5} + p^{7} + O(p^{10})\right)W + \dots + (1 + O(p))W^8 + O(W^9) 
\end{multline*}
Note that this power series has $\lambda$-invariant 1 and thus has a unique root $\alpha_2$ which is defined over $\Zp$.
We explicitly found the following approximation to $\alpha_2$:
$$
\alpha_2 \equiv p^2 + 2  p^4 + 2 p^6 + p^7 + p^8 + p^9 \mod{p^{11}}.
$$
We thus get that 
$$
d_2(W) = (W-\alpha_2) \cdot V_\ell(W)
$$
where $V_\ell(0) \equiv -1 \pmod{p}$.  In particular, $-V_\ell(W)$ is a square in $\Lambda$ and we see that $d_2(W)$ and $-(W-\alpha_2)$ differ multiplicatively by a square.  In particular, $\Lambda[T_2] = \Lambda[\sqrt{-(W-\alpha_2)}]$

Now for $R$ a ring which is a finite and free $\Lambda$-module, let $\disc(R) \subseteq \Lambda$ denote its discriminant ideal.  Write $M \in M_2(\Lambda)$ for the change of basis matrix corresponding to the embedding $\Lambda[T_2] \subseteq \T_\m$ both of which are free $\Lambda$-modules of rank 2.  We then have
$$
\disc(\Lambda[T_2]) = \det(M)^2 \cdot \disc(\T_\m).
$$
Since $\disc(\Lambda[T_2]) = (W-\alpha_2)\Lambda$ is a square-free ideal, we must have that $\det(M)$ is a unit and $\Lambda[T_2] = \T_\m$.  In particular, $\T_\m \simeq \Lambda[\sqrt{-(W-\alpha_2)}]$.

As a check, we computed $d_\ell(W)$ for all  primes $\ell < 11$.  In each case, $d_\ell(W)$ had $\lambda$-invariant equal to 1, and its unique root $\alpha_\ell$ was congruent to $\alpha_2$ modulo the precision of the computation.

Computing the basis of the two-dimensional $(\MSord_\m)^-$ took 27.5 minutes and 107MB, while computing the Hecke polynomials for primes $\leq11$ took 33.5 minutes and 462MB.
\end{example}

\begin{example}
Let $p=37$ and $N=1$.  This example gains its fame from the fact that $37$ is an irregular prime with $B_{32}$ having positive valuation at 37.  In particular, there is a cuspform $f$ of weight 32 congruent to the Eisenstein series $E_{32}$ modulo 37.  For this reason we consider the 30th component of weight space (corresponding to the classical weight 32).  On this component there are exactly 3 ordinary normalized eigenforms:\ $f$, $E_{32}^{\ord}$ (the ordinary $37$-stabilization of $E_{32}$), and a third form not congruent to either $f$ or $E_{32}^{\ord}$.

Let $\m$ denote the maximal ideal of $\T$ corresponding to $f$ and $E_{32}^{\ord}$.  
In this case, the Eisenstein symbols live in the plus part of $\MSord$ and thus $(\MSord_\m)^+$ is rank 2 over $\Lambda$ (with one dimension coming from the Eisenstein series and the other coming from the Hida family through $f$).  Using the methods of sections \ref{sec:decomp}, we can form a basis (of size 2) of this space.  As in the previous example, we compute the discriminant of the characteristic polynomial of $T_\ell$ for various $\ell$.  For example, for $\ell = 2$, we get:
\begin{align*}
d_2(W) &= 16 p^{2} + 23 p^{3} + 6 p^{5} + 12 p^{6} + 24 p^{7} + 12 p^{8} + 27 p^{9} + 17 p^{10} + 14 p^{11} + O(p^{12})\\&
+ \left(34 p + 13 p^{2} + 2 p^{3} + 27 p^{4} + 36 p^{5} + 29 p^{6} + 32 p^{7} + 32 p^{8} + 35 p^{9} + 17 p^{10} + O(p^{11})\right)W\\&
+ \left(25 + 20 p + 26 p^{2} + 9 p^{3} + 22 p^{4} + 6 p^{5} + 3 p^{6} + 8 p^{7} + 17 p^{8} + 24 p^{9} + O(p^{10})\right)W^{2}\\&
 + \cdots + \left(6 + O(p)\right)W^{11} + O(W^{12})
\end{align*}
which we note has $\lambda$-invariant 2.  In fact, looking at the Newton polygon of this power series we see that it has two roots each of valuation 1; call these roots $\alpha_2$ and $\beta_2$.  By inspection, we can only find a single root (mod $p^6$), namely:
$$
\alpha_2 = 23 p + 10 p^{2} + 35 p^{3} + 36 p^{4} + 34 p^{5} + O(p^{6})
$$
This suggests that $\alpha_2 = \beta_2$, that is, that $d_2(W)$ has a double root at $\alpha_2$.  


We note that a computer computation alone could never prove the equality $\alpha_2 = \beta_2$ as we are always working modulo a power of $37$.  Nonetheless, in this example, we can argue as follows.  First note that if $\alpha_2 \neq \beta_2$, then by the same arguments as in Example \ref{ex:p=3N=11} (since $\disc(\Lambda[T_2])$ is squarefree), we have 
$$
\T_\m = \Lambda[T_2] \simeq \Lambda[\sqrt{u(W-\alpha_2)(W-\beta_2)}]
$$
with $u\in \Z_{37}^\times$.  In particular, $\T_\m$ is a domain and its spectrum is thus a single irreducible component.
However, looking at the associated Galois representations we will see that this is impossible.  Indeed, at the Eisenstein points in the family, the associated Galois representation is reducible while at generic cuspidal points this representation is irreducible.  If $\Spec(\T_\m)$ were irreducible, then all Galois representations would have the same behavior (irreducible vs.\ reducible) except at a finite set of points.  This contradiction forces $\alpha_2 = \beta_2$.

Hence $d_2(W) = (W-\alpha_2)^2 \cdot V_\ell(W)$ with $V_\ell(0) \equiv 25 \mod{p}$.  In particular, $V_\ell(W)$ is a square and thus $\Lambda[T_2] \simeq \Lambda[Y] / (Y^2 - (W-\alpha_2)^2)$.  Arguing again with discriminant ideals, we have
$$
\disc(\Lambda[T_2]) = \det(M)^2 \cdot \disc(\T_\m)
$$
where $M$ is the change of basis matrix coming from the inclusion $\Lambda[T_2] \subseteq \T_\m$.  Since 
$\disc(\Lambda[T_2]) = (W-\alpha_2)^2 \Lambda$, we have $\det(M)^2 = 1$ or $(W-\alpha_2)^2$.  In the later case, we would have that the discriminant ideal of $\T_\m$ over $\Lambda$ is a unit, implying that $\T_\m$ is an \'{e}tale $\Lambda$-algebra, and in particular, that the map $\Lambda\rightarrow\T_\m$ is unramified. This implies that $(p,W)\T_\m=\m\T_m$. Since $\m$ corresponds to a $q$-expansion in $\mathbb{F}_p$, we have that $\T_\m/(p,W)\T_\m\cong\T_\m/\m\T_m\cong\mathbb{F}_p$ which is a one-dimensional vector space over $\Lambda/(p,W)\cong\mathbb{F}_p$. By Nakayama's Lemma, $\T_m$ must be rank one over $\Lambda$, a contradiction. Thus, $\T_\m\cong\Lambda[T_2]\cong\Lambda[Y] / (Y^2 - (W-\alpha_2)^2)$.

As a check, we computed $d_\ell(W)$ for $\ell \leq 11$ and in each case $\lambda(d_\ell)=2$ and $\alpha_2$ was a root of $d_\ell(W)$ modulo our precision.  

Geometrically, the spectrum of this ring is two copies of weight space glued together at the weight $\alpha_2$.  This picture is completely consistent with what is known already in this example.  Indeed, the 37-adic $\zeta$-function has $\lambda$-invariant 1.  Thus, the Eisenstein family and the cuspidal family meet at a unique weight $k_z$ -- namely the unique root of $\zeta_{37}(1-k)$.  In \cite[\S6.2.1]{Yves}, this weight is computed to tremendous precision (1000 $p$-adic digits) with the first few digits being
$$
k_z = 13 + 20p + 30p^{2} + 8p^{3} + 11p^{4} + O(p^{5}).
$$
To compare with our computations of $\alpha_2$, we note that the weight $k_z$ in the $T$-variable corresponds to $(1+p)^{1-k_z-2}-1$ and we do indeed have that
$$
(1+p)^{1-k_z-2}-1 \equiv \alpha_2 \mod{p^6}.
$$

Computing the basis of the two-dimensional $(\MSord_\m)^+$ took 1 hour and 56 minutes and 410MB, while computing the Hecke polynomials for primes $\leq11$ took 9.5 minutes and 169MB.
\end{example}


\subsection{$L$-invariants}\label{sec:Linvar}
$L$-invariants arise when a $p$-adic $L$-function vanishes at a point of interpolation due to the vanishing of the Euler-type interpolation factor. The earliest known example of this phenomenon of so-called ``trivial zeroes'' is due to Ferrero--Greenberg \cite{FG}: if $\psi$ is an even Dirichlet character, then
	\[L_p(1-n,\psi)=\left(1-\psi(p)p^{n-1}\right)L(1-n,\psi\omega^{-n}),\text{ for }n\in\Z_{\geq1},
	\]
	so $\left(1-\psi(p)p^{n-1}\right)$ vanishes at $n=1$ whenever $\psi(p)=1$. In \cite{MazurTateTeitelbaum}, Mazur--Tate--Teitelbaum discovered the same type of vanishing occurs for the $p$-adic $L$-function of an elliptic curve, $E$, with split, multiplicative reduction at $p$. The interpolation property gives
	\[ L_p(1,E)=(1-a_p(E)^{-1})\frac{L(1,E)}{\Omega_E},
	\]
	where $\Omega_E$ is the N\'{e}ron period of $E$. When $E$ is split multiplicative at $p$, $a_p(E)=1$, so that the $p$-adic $L$-function vanishes for trivial reasons at $s=1$ (more generally, weight $2$ newforms of level exactly divisible by $p$ and whose $a_p=1$ share the same behaviour). They introduced a new quantity, the $p$-adic $L$-invariant, $\L_p(E)$, given in terms of the $p$-adic Tate parameter of $E$ and conjectured that
	\[
		\left.\frac{d}{ds}L_p(s,E)\right|_{s=1}=\L_p(E)\frac{L(1,E)}{\Omega_E}.
	\]
	This was proved by Greenberg and Stevens in \cite{GS} (the more general case of newforms included). A main ingredient of their proof was a formula they gave for the $L$-invariant of a weight $2$ newform which is germane to our work:
	\[ \L_p(f)=-2\left.\frac{d}{dk}\log_pa_p(k)\right|_{k=0},
	\]
	where $a_p(k)$ is the $p$th Fourier coefficient in the formal $q$-expansion of Equation~\eqref{eqn:formal_qexp} for the Hida family through $f$. When $f$ corresponds to an elliptic curve, Sage can already compute these $L$-invariants using Tate parameters, and we show below that our method provides the same answer. On the other hand, we can also compute cases that don't correspond to elliptic curves, i.e.\ newforms whose Hecke eigenvalues don't lie in $\Q$. Furthermore, our code provides new computations for the $L$-invariant of the symmetric squares of an eigenform (really, the trace-zero adjoint of an eigenform, which is a twist of the symmetric square). In \cite{G94}, Greenberg proposed a general theory of $L$-invariants for ordinary motives providing an arithmetic candidate $L^{\Gr}_p(M)$ for the $p$-adic $L$-invariant of an ordinary motive $M$. When $M$ is the trace-zero adjoint, $\ad^0\!f$, of a newform, Hida and the third author \cite{Hida-Linv,Harron-thesis} gave a formula for Greenberg's $L$-invariant which Dasgupta \cite{samit} has recently shown is the actual $L$-invariant: if $f$ is any $p$-ordinary newform of weight $k_0+2$ of such that $p^2$ does not divide its conductor, then the $p$-adic $L$-function of $\ad^0\!f$ has a trivial zero at $s=1$ and its $L$-invariant is given by
	\begin{equation}\label{eqn:symsquare_Linvar}
		\L_p(\ad^0\!f)=-2\left.\frac{d}{dk}\log_pa_p(k)\right|_{k=k_0}.
	\end{equation}
	We remark that unlike the case of modular forms themselves which (in the $p$-ordinary case) only have trivial zeroes in weight $2$ and conductor exactly divisible by $p$, the trace-zero adjoint always has a trivial zero. We may therefore consider the $L$-invariant as varying in the Hida family and our computations allow us to compute the Iwasawa function giving the adjoint $L$-invariant in a family. Also, note that the non-vanishing of the $L$-invariant, an important part of Greenberg's conjecture, is only known in the cases of Dirichlet characters and split, multiplicative elliptic curves (where the result is one from transcendence theory: the theorem of St-\'Etienne \cite{STE}, which says that the Tate parameter is transcendental). In particular, the $L$-invariants of newforms that do not correspond to elliptic curves are not known to be non-zero. Furthermore, up to now, all that has been known for the adjoint of a newform is that either the $L$-invariants are all zero in a Hida family, or all but finitely $L$-invariants are nonzero (since an Iwasawa function has finitely many zeroes). Our computations provide non-vanishing results for specific forms and forms in families. In particular, they show that the adjoint $L$-invariants of all forms in the Hida families of Examples \ref{ex:X011_qexp}--\ref{ex:19a1_qexp} are nonzero and there can be at most one form in Example~\ref{ex:N95p5_qexp} with vanishing adjoint $L$-invariant.

We collect some values of $L$-invariants of modular forms as well as their trace-zero adjoints. Again, full data is available in Appendix~\ref{DAL}.

\begin{example}
	The Hida families in Examples \ref{ex:X011_qexp} and \ref{ex:15a1_qexp} pass through elliptic curves with split, multiplicative reduction at $p$ (when $k=0$) and the data of $a_p(k)$ provided above allows us to compute the $p$-adic $L$-invariants of these curves. We obtain:
	\begin{align*}
		\L_{11}(11a1)&=6p + 5p^{2} + 7p^{3} + 7p^{4} + 7p^{5} + 7p^{6} + 4p^{7} + 3p^{8} + 6p^{9} + 7p^{10} + O(p^{11})\\
		\L_5(15a1)&=2p + p^{3} + p^{4} + 3p^{5} + 4p^{6} + 2p^{7} + 4p^{8} + 4p^{9} + p^{10} + O(p^{11})
	\end{align*}
	We verified that these agree with Sage's already available computation of these $L$-invariants (which is \emph{much} quicker) to the given precision.
\end{example}

\begin{example}
	The Hida family in Example~\ref{ex:N95p5_qexp} passes through a newform with $p=5$ exactly dividing the level and $a_p=1$. We compute its $L$-invariant, concluding that it is non-zero, to be:
	\[
		\L_5(f)=p^{2} + 4p^{3} + 4p^{4} + 3p^{5} + O(p^{6}).
	\]
\end{example}

\begin{example}\label{ex:sym2family_Linvar}
	In fact, for each of the Hida families in Examples~\ref{ex:X011_qexp}--\ref{ex:N95p5_qexp} (which we now label $\F_{11}, \F_{15}$, $\F_{19}$, and $\F_{95}$, respectively), we can use Equation~\eqref{eqn:symsquare_Linvar} to provide a formula for the symmetric square $L$-invariant as a function of the weight. The full results can be found in Appendix~\ref{DAL}.
	\begin{align*}
		\L_{11}(\ad^0\!\F_{11})&=6p + \cdots + 7p^{10}+ \left(10p^{2} + \cdots + 9p^{9}\right)k+\cdots+ \left(4p^{10}\right)k^{9} + O(p^{11}, k^{10}).
	\end{align*}
	Plugging in $k=12-2$ yields the value of the $11$-adic symmetric square $L$-invariant of $\Delta$:
	\[
		\L_{11}(\ad^0\!\Delta)=6p + 6p^{2} + 5p^{3} + 3p^{4} + 3p^{5} + 3p^{6} + 8p^{7} + 6p^{8} + 5p^{9} + 8p^{10} + O(p^{11}).
	\]
	
	
	For the family $\F_{15}$ of Example \ref{ex:15a1_qexp}, we obtain:
	\begin{align*}
		\L_{5}(\ad^0\!\F_{15})&=2p + \cdots + 4p^{9} + p^{10}+ \left(3p^{2} + \cdots + 4p^{10}\right)k+\cdots+ \left(3p^{9} + 3p^{11}\right)k^{9} + O(p^{11}, k^{10}).
	\end{align*}
	
	For the family $\F_{19}$ of Example \ref{ex:19a1_qexp}, we obtain:
	\begin{align*}
		\L_{5}(\ad^0\!\F_{19})&=p + 4p^{2} + 2p^{3} + 4p^{4} + p^{5} + 4p^{6}+ \left(3p^{4} + 3p^{5} + 3p^{6}\right)k+\cdots+ \left(2p^{6}\right)k^{4} + O(p^{7}, k^{5}).
	\end{align*}
	
	For the family $\F_{95}$ of Example~\ref{ex:N95p5_qexp}, we obtain:
	\begin{align*}
		\L_{5}(\ad^0\!\F_{95})&=p^{2} + 4p^{3} + 4p^{4} + 3p^{5}+ \left(p^{2} + 4p^{3} + 4p^{4}\right)k+\cdots+ \left(3p^{4} + 4p^{5}\right)k^{3} + O(p^{6}, k^{4}).
	\end{align*}
	
	As Iwasawa power series (in the variable $W$), these $L$-invariants have $\mu$-invariant at least $1$ (since a $\log_p(1+p)$ appears upon taking the derivative with respect to $k$). We verify that the first three $L$-invariant functions indeed have $\mu=1$ and $\lambda=0$, thus implying they never vanish. For $\L_{5}(\ad^0\!\F_{95})$, the $\mu$- and $\lambda$-invariants are both $1$ and a computation shows that the $L$-invariant vanishes at a weight congruent to $4\cdot 5 + 5^4\mod{5^5}$; of course, this weight is not expected to be classical.
\end{example}

\subsection{Two-variable $p$-adic $L$-functions}
A $p$-ordinary eigenform $f$ of classical weight $k\geq2$ has a $p$-adic $L$-function $L_p(s,f)$ attached to it following the work of Manin, Amice--V\'elu, and Vi\v{s}ik. Varying the form $p$-adically in a Hida family one can expect to `glue' the one-variable functions together to obtain a two-variable $p$-adic $L$-function $L_p(s,\kappa)$ where $\kappa$ is a weight variable around a neighborhood of $k$. That this is the case is due to Ohta (unpublished), Mazur--Kitagawa \cite{Kitagawa}, and Greenberg--Stevens \cite{GS}; it was a fundamental ingredient in the latter's proof of the Mazur--Tate--Teitelbaum conjecture. Greenberg has conjectured that the generic order of vanishing of $L_p(s,\kappa)$ along the line $s=\kappa/2$ is at most one (and congruent to the sign of the functional equation of $L_p(s,f)$ modulo $2$) (see \cite[p.~439]{NP} for this statement and for some important consequences of its proof). Additionally, Greenberg and Stevens end the introduction to \cite{GS} by asking about the linear factors of the leading term in the expansion of $L_p(s,\kappa)$ about $s=1$ and $\kappa=2$ when the sign is $-1$.

In this section, we include a few sample computations of two-variable $p$-adic $L$-functions through overconvergent modular symbols.  To motivate these computations, we quickly review the single variable case.  In \cite[Theorem 8.3]{Stevens-Rigid}, Glenn Stevens gives a construction of the $p$-adic $L$-function of an eigenform $f$ solely in terms of its corresponding overconvergent modular eigensymbol $\Phi_f$, the unique overconvergent eigensymbol with the same system of eigenvalues as $f$.  To form the $p$-adic $L$-function of $f$, one simply takes $L_p(f):=\Phi_f(\{\infty\}-\{0\})$ which is a distribution on $\Zp$ and restricts this distribution to $\Zpx$.  
Now if $\Phi \in \MSo{\bDsc}$ is a {\it family} of overconvergent eigensymbols, we analogously define the two-variable $p$-adic $L$-function $L_p(\Phi)$ to be the restriction of  $\Phi(\{\infty\}-\{0\})$ to $\Zpx$.  The result is a family of distributions on $\Zpx$ whose specialization to any weight is the $p$-adic $L$-function of the corresponding eigenform in that weight.

In \cite[\S9]{PS1}, the penultimate author and Stevens explain how in practice one can compute single variable $p$-adic $L$-functions from overconvergent modular eigensymbols. The same method, which we describe now, applies in our case and allows us to compute two-variable $p$-adic $L$-functions attached to eigenfamilies of overconvergent modular symbols. 

For $\Phi \in \MSo{\bDsc}$ an eigenfamily, set $L_p(\Phi) := \Phi(\{\infty\}-\{0\})\big|_{\Zp^\times}$.  Let $T$ denote the cyclotomic variable of the $p$-adic $L$-function so that $L_p(\Phi) = \sum a_n T^n$, where the $a_n$ are functions on weight space.  Thus, the transformation from the $T$ variable to the $s$ variable is obtained by setting $T=\gamma^s-1$. Then, as in \cite[\S9.2]{PS1}, we have
\[
a_n = \int_{\Zp^\times}\binom{\log_\gamma\langle z\rangle}{n}dL_p(\Phi)=\sum_{j\geq1}c_j^{(n)}\sum_{a=1}^{p-1}\omega(a)^{-j}\int_{a+p\Zp}(z-\omega(a))^jdL_p(\Phi) \in \Lambda,
\]
where the coefficients $c_j^{(n)}$ are defined by
\[
	\binom{\log_\gamma y}{n}=\sum_{j\geq1}c_j^{(n)}y^j.
\]
To approximate the coefficient $a_n$, we must truncate the above infinite sum describing it.  
Since $v_p\left(\ds\int_{a+p\Zp}(z-\omega(a))^jd\mu_\Phi\right)\geq p^{j-v_p(\lambda)}$, we can determine the error in approximating the two-variable $p$-adic $L$-function in this way from the following lemma.

\begin{lemma} For $n\geq1$,
	\begin{enumerate}
		\item If $j\leq n$, then
			\[v_p(c^{(n)}_j)\geq-\left\lfloor\frac{n}{p-1}\right\rfloor-j,\]
		\item if $j>n$, then
			\[v_p(c^{(n)}_j)\geq-\left\lfloor\frac{n}{p-1}\right\rfloor-n-\left\lfloor\frac{j}{p}\right\rfloor.\]
	\end{enumerate}
\end{lemma}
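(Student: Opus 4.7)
The proof splits into two cases based on whether $j\leq n$ or $j>n$.

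For $j\leq n$, I plan to invoke the maximum modulus principle on the closed disc $v_p(y)\geq 1$. Since $L(y)=\log(1+y)/\log\gamma$ is analytic on $\{y:v_p(y)>1/(p-1)\}$ (by Lemma~\ref{lemma:logiso}) and $\binom{\cdot}{n}$ is a polynomial, the Taylor series $\sum c_j^{(n)}y^j$ converges on this closed disc (which sits inside the domain since $p$ is odd). For $y\in p\Z_p$, Lemma~\ref{lemma:logiso} gives $v_p(\log(1+y))=v_p(y)\geq 1=v_p(\log\gamma)$, so $L(y)\in\Z_p$; consequently $\binom{L(y)}{n}\in\Z_p$ by the standard integrality of binomials on $\Z_p$. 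Since the sup norm of the series on $v_p(y)\geq1$ is at most $1$, the Gauss-norm equality gives $|c_j^{(n)}|p^{-j}\leq 1$, hence $v_p(c_j^{(n)})\geq-j\geq -\lfloor n/(p-1)\rfloor-j$.

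For $j>n$, the key will be the factorization
\[
n!\log^n\gamma\cdot\binom{L(y)}{n}=\prod_{\ell=0}^{n-1}\bigl(\log(1+y)-\ell\log\gamma\bigr).
\]
I would expand over subsets $S\subseteq\{1,\ldots,n-1\}$ recording which factors contribute the ``variable part'' $\log(1+y)$ (the $\ell=0$ factor always does, having no constant term). Letting $T=\{1,\ldots,n-1\}\setminus S$, the $S$-summand equals $\log(1+y)^{|S|+1}\cdot(-\log\gamma)^{|T|}\cdot\prod_{\ell\in T}\ell$. Expanding $\log(1+y)^m$ as a sum over compositions of $j$ into $m$ positive parts and using the elementary inequality $v_p(i)\leq i/p$ for $i\geq 1$ (which gives $\sum_\ell v_p(i_\ell)\leq\lfloor j/p\rfloor$), one checks $v_p([y^j]\log(1+y)^m)\geq-\lfloor j/p\rfloor$. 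Therefore the $y^j$-coefficient of the $S$-summand has valuation at least
\[
-\lfloor j/p\rfloor+|T|+\sum_{\ell\in T}v_p(\ell)=-\lfloor j/p\rfloor+\sum_{\ell\in T}(1+v_p(\ell))\geq -\lfloor j/p\rfloor,
\]
and this bound is attained at $S=\{1,\ldots,n-1\}$ (i.e.\ $T=\emptyset$), which is permissible precisely because $|S|+1=n\leq j$. By the ultrametric inequality the bound survives summation over $S$, giving $v_p([y^j]\text{product})\geq-\lfloor j/p\rfloor$. Dividing by $n!\log^n\gamma$ and invoking the standard bound $v_p(n!)\leq\lfloor n/(p-1)\rfloor$ (a consequence of Legendre's formula together with the fact that the base-$p$ digit sum of $n$ is at least $n\bmod(p-1)$) yields the claim.

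The principal challenge is identifying the right combinatorial decomposition in the second case, in particular recognizing that the constant terms $-\ell\log\gamma$ for $\ell\geq1$ line up to cancel the $\log^n\gamma$ denominator when all the variable parts are selected. Once that is in place, the remaining estimates are routine manipulations of $p$-adic valuations.
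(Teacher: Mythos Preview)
Your argument for part (2) is essentially the paper's: both expand the product $\prod_{\ell=0}^{n-1}(\log(1+y)-\ell\log\gamma)$, bound $\sum v_p(i_\ell)$ by $\lfloor j/p\rfloor$ (you via $v_p(i)\leq i/p$, the paper via the optimal partition being $p+\cdots+p+r$), and absorb $n!\log^n\gamma$ at the end. The organization differs slightly (you group first by the subset $T$ of ``constant'' factors, the paper by the full tuple $(a_0,\dots,a_{n-1})$), but the content is the same.

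Your part (1), however, has a real gap. You argue that $\binom{L(y)}{n}\in\Z_p$ for $y\in p\Z_p$ and conclude that the Gauss norm on the disc $v_p(y)\geq 1$ is at most $1$. But the Gauss norm equals the sup over the $\C_p$-points of the disc, not just over $p\Z_p$, and the integrality of $\binom{x}{n}$ fails for $x\in\mathcal{O}_{\C_p}\setminus\Z_p$ (e.g.\ $\binom{\sqrt{p}}{p}\notin\mathcal{O}_{\C_p}$). In fact your intermediate claim $v_p(c_j^{(n)})\geq -j$ is \emph{false}: for $n=p$ and $j=1$ one computes directly that $c_1^{(p)}=(-1)^{p-1}/(p\log\gamma)$, so $v_p(c_1^{(p)})=-2<-1$.

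The fix is easy and keeps your maximum-modulus approach intact. For any $y\in\C_p$ with $v_p(y)\geq 1$ one still has $L(y)\in\mathcal{O}_{\C_p}$, whence $\bigl|\binom{L(y)}{n}\bigr|\leq |1/n!|=p^{v_p(n!)}$. Now the Gauss-norm equality legitimately gives $|c_j^{(n)}|p^{-j}\leq p^{v_p(n!)}$, i.e.\ $v_p(c_j^{(n)})\geq -v_p(n!)-j\geq -\lfloor n/(p-1)\rfloor-j$, which is exactly the bound of part (1). This corrected version is in fact cleaner than the paper's combinatorial argument for (1).
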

\begin{proof}
By grouping like powers of $y$ in the product
\[
	\left(\frac{1}{\log_p\gamma}\left(y-\frac{y^2}{2}+\cdots\right)\right)\cdot\left(\frac{1}{\log_p\gamma}\left(y-\frac{y^2}{2}+\cdots\right)-1\right)\cdots\left(\frac{1}{\log_p\gamma}\left(y-\frac{y^2}{2}+\cdots\right)-(n-1)\right),
\]
we obtain the formula
\begin{equation}
	c^{(n)}_j=\frac{1}{n!}\sum_{\substack{a_0+\cdots+a_{n-1}=j \\ a_k\geq0,a_0>0}}(-1)^{j+n}\frac{\prod_{a_k=0}k}{\prod_{a_k\neq0}a_k\log_p\gamma}.
\end{equation}
Using that $v_p(\log_p(\gamma))=1$, we obtain
\begin{align*}
	v_p(c^{(n)}_j)&\geq-v_p(n!)+\min_{\substack{a_0+\cdots+a_{n-1}=j \\ a_k\geq0,a_0>0}}\left(\sum_{a_k=0}v_p(k)-\sum_{a_k\neq0}(v_p(a_k)+1)\right)\\
	&\geq-v_p(n!)-\max_{\substack{a_0+\cdots+a_{n-1}=j \\ a_k\geq0,a_0>0}}\sum_{a_k\neq0}(v_p(a_k)+1).
\end{align*}
We thus proceed to find an upper bound for
\[
	\max_{\substack{a_0+\cdots+a_{n-1}=j \\ a_k\geq0,a_0>0}}\sum_{a_k\neq0}(v_p(a_k)+1).
\]
Consider $j\leq n$. For a given partition $a_0+\cdots+a_{n-1}=j$, we obtain a $1$ in the sum for each non-zero $a_k$. If we combine $m$ non-zero terms $a_{k_1},\dots,a_{k_m}$ into one term, we gain $v_p(a_{k_1}+\cdots+a_{k_m})$, but lose $m-1+v_p(a_{k_1})+\cdots+v_p(a_{k_m})$. This is never a net gain, so the optimal partition is that with $j$ ones and $n-j$ zeroes, yielding an upper bound of $j$.

For $j>n$, note that the sum of the valuations occurring above is bounded by the same sum where there is no restriction on the length of the partition. Of all partitions of $j$, the one maximizing the sum of the $v_p(a_k)$ is $j=p+p+\cdots+p+r$ with $0\leq r<p$. Indeed, it is clearly optimal for the parts of the partition to be powers of $p$ and, for $r>1$, the part $p^r$ contributes $r$ while the sum $p^{r-1}+\cdots+p^{r-1}$ ($p$ times) contributes $p(r-1)>r$.
\end{proof}

With this lemma in hand, we computed several examples of two-variable $p$-adic $L$-functions. Our code produces a power series $F(T,w)$ with $T$ the cyclotomic variable and $w$ the same weight variable as above. In these examples we have made the following normalizations. As throughout the whole paper, the weight variable $k$ is normalized to correspond to modular forms of weight $k+2$ and is obtained by substituting $w=((1+p)^k-1)/p$. The cyclotomic variable $s$ is shifted by $1$ so that $s=0$ corresponds to the central point of the $L$-function of a weight two modular form; it is obtained by substituting $T=(1+p)^s-1$. Furthermore, the $p$-adic $L$-function we compute is only well-defined up to a unit power series in $w$ and we normalize it so that the first non-zero coefficient in $T$ is a power of $p$ times a power of $w$.

\begin{example}\label{ex:X011_padicL}
	We consider the same Hida family as in Example~\ref{ex:X011_qexp}: the unique $11$-adic Hida family of tame level $1$ and branch $m=0$. In order to get a non-zero $p$-adic $L$-function, we must use a symbol in the plus subspace, which no longer has dimension $1$. Indeed, the presence of an Eisenstein family raises the dimension of $(\MSord)^+$ to $2$. Still, localizing at the maximal ideal $\m$ corresponding to $X_0(11)$, we have that $(\MSord_\m)^+$ is one-dimensional since $X_0(11)$ is not Eisenstein at $2$. We obtain the following two-variable $p$-adic $L$-function (see Appendix~\ref{DApL} for the full expansion):
	\begin{align*}
		L_{11}(\F_{11},s,k)&=\alpha k -2\alpha s+\cdots+ (6p^7 )k^7 + (2p^7 )k^6s +\cdots + (7p^7 )ks^6 + (9p^7 )s^7+O(p^8,(k,s)^8)\\
			&=(k-2s)(\alpha +(6p^2 + 10p^3 + \cdots )k + (2p^3 + \cdots )k^2+\cdots),
	\end{align*}
	where $\alpha=p + 5p^2 + 9p^3 + 9p^4 + 9p^5 + 5p^6 + 8p^7+O(p^8)$. Note that despite the non-vanishing of the central $L$-value of $X_0(11)$, the $p$-adic $L$-function vanishes there due to the presence of an exceptional zero.
	
	In order to numerically verify Greenberg's conjecture, we consider the power series $F(T,w)$ that yields $L_{11}(\F_{11},s,k)$ as described above. The line $s=k/2$ corresponds to $(1+T)^2=1+pw$. Factoring
	\[
		F(T,w)=((1+T)^2-(1+pw))F_1(T,w)
	\]
	and specializing to $w=((1+T)^2 -1)/p$ yields 
	\[
		F_1(T,((1+T)^2 -1)/p)=10 + 10p + T + O(p^2,T^2).
	\]
	As this is a unit power series, this confirms Greenberg's conjecture for $\F_{11}$, showing that the order of vanishing along $s=k/2$ is exactly one throughout the Hida family.
	
	Given the family of overconvergent modular symbols $\F_{11}$, computing the $p$-adic $L$-function took 30 seconds and 3MB.
\end{example}

\begin{example}\label{ex:37a_padicL}
	Now, consider the elliptic curve 37a, the curve of rank $1$ of least conductor, and let $p=5$. This curve has $a_5=-2$. The space $(\MSord)^+$ has dimension $16$ with one dimension coming from the (ordinary $5$-stabilization of the) Eisenstein series of level 37 and the remaining dimensions being new of level $37\cdot5$; hence the remaining dimensions have $a_5\equiv\pm1\mod{5}$. Thus, localizing at the maximal ideal $\m$ corresponding to 37a yields a one-dimensional space. We computed the following $p$-adic $L$-function (full data in Appendix~\ref{DApL}):
	\begin{align*}
		L_{5}(\text{37a},s,k)&=\alpha k -2\alpha s+\cdots + (4p^5 + 3p^6 )k^6 +\cdots + (4p^5 )s^6+O(p^7,(k,s)^7),\\
		&=(k-2s)(\alpha + (3p^2 + 4p^3 + 3p^4 + 3p^5 + p^6)k + (p^2 + 2p^4 + p^5 + 4p^6)s+\cdots)
	\end{align*}
	where $\alpha=p + 2p^2 + 4p^3 + 2p^4 + p^6 + O(p^7)$.
	
	Again, factoring
	\[
		F(T,w)=((1+T)^2-(1+pw))F_1(T,w)
	\]
	and specializing to $w=((1+T)^2 -1)/p$ yields 
	\[
		F_1(T,((1+T)^2 -1)/p)=4 + 4p + 4pT + O(p^2,T^2).
	\]
	Again, this confirms Greenberg's conjecture that the order of vanishing along $s=k/2$ is exactly one throughout the Hida family.
	
	Given the family of overconvergent modular symbols, computing the $p$-adic $L$-function took 174 seconds and 66MB.
\end{example}

\begin{example}\label{ex:91b_padicL}
	The elliptic curve 91b1 is a curve of rank $1$ that has split multiplicative reduction at $p=7$. As such, its one-variable $p$-adic $L$-function vanishes to order $2$ at the central point, despite the classical $L$-function only vanishing to order $1$ there. Greenberg's conjecture states that the order of vanishing of the two-variable $p$-adic $L$-function along the line $s=k/2$ should however be $0$, generically. Our calculations verify this. First off, $(\MSord)^+$ is $9$-dimensional, with $2$ dimensions coming from Eisenstein series, 2 from isogeny classes of elliptic curves (91a and 91b), 2 from a Galois conjugacy class of newforms defined over $\Q(\sqrt{2})$, and 3 from a Galois conjugacy class of newforms defined over the cubic field of discriminant 316. The curve 91b1 is not Eisenstein at $2$ and has $a_7=a_{13}=1$. The quadratic (resp.~cubic) Galois conjugacy class has $a_{13}=-1$ (resp.~$a_7=-1$), so that, after localizing at the maximal ideal corresponding to the curve $91b1$, we obtain a one-dimensional space. The two-variable $p$-adic $L$-function we compute is
	\begin{align*}
		L_{7}(\text{91b1}, s,k)&=(p^2 + 6p^3 + 4p^4)k^2 + (2p^2 + p^3)ks + (5p^2 + 5p^3 + 6p^4)s^2+\cdots+O(p^5,(k,s)^5)
	\end{align*}
	(the full data is available in Appendix~\ref{DApL}).
	
	Considering $F(T,w)$ specialized to $w=((1+T)^2 -1)/p$, we obtain
	\[
		F(T,((1+T)^2 -1)/p)=\left(6 + 3p + 6T + O(p^2,T^2)\right)T^2.
	\]
	Since this only vanishes at $T=0$, the generic order of vanishing along $s=k/2$ is $0$, and in fact, the two-variable $p$-adic $L$-function only vanishes at $(s,k)=(0,0)$ on the line $s=k/2$.

	Given the family of overconvergent modular symbols, computing the $p$-adic $L$-function took 45 seconds and 24MB.
\end{example}



\begin{appendix}
\section{Comparing $\MSord$ with Greenberg--Stevens modular symbols}
\label{app:GS}

We note that Hida theory implies that ordinary $p$-adic families of cuspidal eigenforms extend to all of weight space.  We would thus hope to see that the Hecke-eigenvalues occurring in the ordinary subspace of either $\MSo{\bDso}$ or $\MSo{\bDsc}$ extend to bounded functions on the entire open disc of radius 1.  In particular, in this appendix,  we will establish this fact  by comparing $\MSo{\bDsc}^{\ord}$ to the space of Greenberg--Stevens two-variable modular symbols.

\subsection{Relevant measure spaces}

Let $\M(\Zpx \times \Zp)$ denote the space of $\Zp$-valued measures on $\Zpx \times \Zp$; that is, the continuous dual of the space of continuous functions on $\Zpx \times \Zp$.  We endow this space with a right action of $\SL_2(\Z)$ by
$$
(\mu | \gamma)(f(x,y)) = \mu(\gamma \cdot f(x,y)) = \mu(f(ax+cy,bx+dy))
$$
and with the structure of a $\tLambda := \Zp[[\Zpx]]$-module by:
$$
([a] \cdot \mu)(f(x,y)) = \mu(f(ax,ay))
$$
where $a \in \Zpx$ and $[a]$ is the natural image of $a$ in $\tLambda$.  

For $\kappa \in \W$, there is a ``specialization to weight $\kappa$" map:
\begin{align*}
\M(\Zpx \times \Zp) &\to \bD^0_\kappa \\
\mu &\mapsto \mu_\kappa
\end{align*}
where 
$$
\mu_\kappa(f) := \int_{\Zpx \times \Zp} f(y/x) \kappa(x) ~d\mu
$$
for $f \in \bA_\kappa$.

\begin{prop}
For $\kappa \in \W$, the specialization to weight $\kappa$ map 
$$
\M(\Zpx \times \Zp) \to \bD^0_\kappa 
$$
is $\sigop$-equivariant.  Further, this map is $\tLambda$-linear if  $\tLambda$ acts on $\bD^0_\kappa$ by $[a] \cdot \mu = \kappa(a) \mu$.
\end{prop}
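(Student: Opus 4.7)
Both claims reduce to straightforward manipulations from the definitions, so the plan is to unwind each action on the source and target and match them term by term. The only subtle point is checking that the integrands that appear after changing variables are actually continuous functions on $\Zpx\times\Zp$, and this is where the defining conditions on $\sigop$ (namely $a\in\Zpx$ and $c\in p\Zp$) get used.

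First I would prove $\sigop$-equivariance. Fix $\gamma=\mat\in\sigop$ and $f\in\bA_\kappa$. Unwinding the weight $\kappa$ action on the target,
\[
(\mu_\kappa\!\mid_\kappa\!\gamma)(f)=\mu_\kappa\bigl(F_{\kappa,a,c}(z)\cdot f((b+dz)/(a+cz))\bigr)=\int F_{\kappa,a,c}(y/x)\,f\!\left(\tfrac{b+d(y/x)}{a+c(y/x)}\right)\kappa(x)\,d\mu.
\]
On the other hand, unwinding the definition of the $\sigop$-action on $\M(\Zpx\times\Zp)$,
\[
(\mu\mid\gamma)_\kappa(f)=\int f\!\left(\tfrac{bx+dy}{ax+cy}\right)\kappa(ax+cy)\,d\mu.
\]
To compare, I use Lemma~\ref{lemma:rigan}(3), which gives $F_{\kappa,a,c}(y/x)=\kappa(a+c(y/x))$, together with the multiplicativity of $\kappa$ to combine $\kappa(x)\kappa(a+cy/x)=\kappa(ax+cy)$. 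Multiplying numerator and denominator of the fractional linear argument by $x$ then identifies the two integrands.

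The key step to check (and really the only non-formal one) is that the functions being integrated are legitimate continuous functions on $\Zpx\times\Zp$, so that the two sides even make sense as measure-theoretic pairings. Since $\gamma\in\sigop$ forces $a\in\Zpx$ and $c\in p\Zp$, for any $(x,y)\in\Zpx\times\Zp$ we have $ax+cy\equiv ax\pmod{p}$, hence $ax+cy\in\Zpx$; this ensures that $\kappa(ax+cy)$ makes sense and that $(bx+dy)/(ax+cy)\in\Zp$ lies in the domain of $f$. Similarly $a+c(y/x)\in 1+p\Zp\cdot\omega(a)^{-1}\cdot\Zpx\subset\Zpx$, so the appeal to Lemma~\ref{lemma:rigan}(3) is valid. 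This is the main (minor) obstacle; everything else is formal.

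Finally, for $\tLambda$-linearity, I would just compute directly for $a\in\Zpx$ and $f\in\bA_\kappa$:
\[
([a]\cdot\mu)_\kappa(f)=\int f(y/x)\,\kappa(x)\,d([a]\cdot\mu)=\int f((ay)/(ax))\,\kappa(ax)\,d\mu=\kappa(a)\int f(y/x)\,\kappa(x)\,d\mu=\kappa(a)\,\mu_\kappa(f),
\]
where the middle equality uses the definition of $[a]\cdot\mu$ and the penultimate equality uses multiplicativity of $\kappa$. This establishes that the specialization map intertwines the $\tLambda$-actions with the scalar action through $\kappa$ on $\bD^0_\kappa$.
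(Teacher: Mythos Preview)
Your proposal is correct and follows essentially the same approach as the paper: both sides are unwound from the definitions and matched using the multiplicativity of $\kappa$, and the $\tLambda$-linearity is checked by the same direct computation. The paper's proof is slightly terser (it writes $\kappa(a+cz)$ directly rather than passing through $F_{\kappa,a,c}$ and Lemma~\ref{lemma:rigan}), while you add the explicit check that $ax+cy\in\Zpx$ so that the integrands are continuous on $\Zpx\times\Zp$---a point the paper leaves implicit.
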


\begin{proof}
For $\gamma = \smallmat$ in $\sigop$ and $f \in \bA$, we have
$$
(\mu \big| \gamma)_\kappa(f) =
(\mu\big|\gamma)( f(y/x) \kappa(x)) =
\mu\left( f\left(\frac{bx+dy}{ax+cy}\right) \cdot \kappa(ax+cy)\right),
$$
while
\begin{align*}
(\mu_\kappa \big|_\kappa \gamma)(f) 
&= \mu_\kappa\left(\kappa(a+cz) \cdot f\left(\frac{b+dz}{a+cz}  \right)\right) \\
&= \mu \left(\kappa(a+c(y/x)) \cdot f\left(\frac{b+d(y/x)}{a+c(y/x)}\right)  \kappa(x) \right)\\
&= \mu \left(\kappa(ax+cy) \cdot f\left(\frac{bx+dy}{ax+cy}\right)   \right),\\
\end{align*}
as desired.

Also, for $a \in \Zpx$, we have
\begin{align*}
([a] \cdot \mu)_\kappa(f) &= ([a] \cdot \mu)(f(y/x) \kappa(x)) = \mu(f(ay/ax) \kappa(ax))\\
&= \kappa(a) \mu(f(y/x) \kappa(x)) 
= \kappa(a) \mu_\kappa(f).
\end{align*}
\end{proof}


Note that  $\bDsc$ and $\bD(\bArc) := \Hom_{\cont}(\bA^0,\bArc)$ are both naturally $\tLambda := \Zp[[\Zpx]]$-modules.  Indeed, $\tLambda$ is naturally identified with measures on $\Zpx$, and  the Amice transform identifies measures on $\Zpx$ with bounded (rigid) functions on $\W$
which, by restriction to $W_m$,  naturally give elements of $\bArc$.
Then $\tLambda$ acts on $\bD(\bArc)$ or on $\bDsc$ simply  by scaling the values of the distribution.

\subsection{Comparing modular symbols}

We seek to compare $\M(\Zpx \times \Zp)$-valued modular symbols with $\bDsc$-valued modular symbols.  We begin with a map.

\begin{prop}
\label{prop:alpha}
There is a $\tLambda$-linear $\sigop$-map
$$
\alpha: \M(\Zpx \times \Zp) \lra \bD(\bArc)
$$
given by
$$
\mu \mapsto (f \mapsto (\kappa \mapsto \mu_\kappa(f))).
$$
That is, for $\mu \in \D(\Zpx \times \Zp)$,  the moments of the distributions $\mu_\kappa$ vary (rigid) analytically as $\kappa$ varies over weight space.
\end{prop}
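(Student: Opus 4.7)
The main content is the well-definedness of $\alpha$: given $\mu \in \M(\Zpx \times \Zp)$ and $f \in \bA^0$, we must show that the function $\kappa \mapsto \mu_\kappa(f)$ on $\Wr$ actually lies in $\bArc$. The $\sigop$- and $\tLambda$-compatibilities will follow essentially formally from the previous proposition plus the compatibility built into the definitions.

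To establish well-definedness, my plan is to write the universal character in the $w$-variable and expand. For $\kappa \in \Wr$ with $\kappa = \omega^m \kappa_0$, any $x \in \Zpx$ decomposes as $x = \omega(x) \langle x\rangle$ with $\langle x\rangle \in 1+p\Zp$, and
\[
\kappa(x) = \omega(x)^m (1+pw)^{\log_\gamma \langle x\rangle} = \omega(x)^m \sum_{n\geq 0} \binom{\log_\gamma \langle x\rangle}{n}(pw)^n,
\]
where $w = (\kappa(\gamma)-1)/p$. For a monomial $z^j$, I will interchange sum and integral to write
\[
\mu_\kappa(z^j) = \sum_{n\geq 0} (pw)^n \int_{\Zpx \times \Zp} (y/x)^j\, \omega(x)^m \binom{\log_\gamma \langle x\rangle}{n}\, d\mu(x,y).
\]
Each integrand is a continuous $\Zp$-valued function on the compact set $\Zpx \times \Zp$ (since $\binom{\log_\gamma\langle x\rangle}{n}$ is a continuous $\Zp$-valued function of $x\in\Zpx$), so each coefficient lies in $\Zp$. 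Thus $\mu_\kappa(z^j)$ is a power series in $\Zp[[pw]] \subseteq \Zp[[w]] = \bArc$. Expanding a general $f = \sum c_j z^j \in \bA^0$ with $c_j \to 0$ in $\Zp$, the resulting sum $\sum c_j \mu_\kappa(z^j)$ converges $p$-adically in $\bArc$ since each $\mu_\kappa(z^j)$ lies in $\bArc$ with uniformly bounded norm. This gives the asserted $\alpha(\mu)(f) \in \bArc$ and shows $\alpha(\mu)$ is continuous.

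For $\sigop$-equivariance and $\tLambda$-linearity, I will reduce to the ``fibered'' statements of the previous proposition. By construction of the $\sigop$-action on $\bD(\bArc)$ via $K_{a,c,m}(z,w)$, and by Lemma~\ref{lemma:Kconverge}(3) which gives $\kappa(K_{a,c,m}(z,w)) = F_{\kappa,a,c}(z)$, specialization at any $\kappa \in \Wr$ intertwines the $\sigop$-action on $\bD(\bArc)$ with the weight $\kappa$ action on $\bD^0_\kappa$. Combined with the $\sigop$-equivariance of $\mu \mapsto \mu_\kappa$ from the preceding proposition, one has $\alpha(\mu|\gamma)$ and $\alpha(\mu)|\gamma$ agreeing pointwise on $\Wr$; since both are elements of $\bArc$, they are equal. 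The argument for $\tLambda$-linearity is parallel: the natural map $\tLambda \to \bArc$ sends $[a]$ to the function $\kappa \mapsto \kappa(a)$, so the $\tLambda$-action on $\bD(\bArc)$ specializes fiberwise to scaling by $\kappa(a)$, which matches the action on $\bD^0_\kappa$ given in the preceding proposition. The main (and only genuinely nontrivial) obstacle is justifying the interchange of sum and integral above, which requires checking uniform convergence of the expansion of $\kappa(x)$ as a continuous function of $x \in \Zpx$ valued in $\bArc$; this follows from the fact that $|(pw)^n| \leq 1$ uniformly on $|w|\leq 1$ together with $\binom{\log_\gamma \langle x \rangle}{n} \in \Zp$ for $x \in \Zpx$.
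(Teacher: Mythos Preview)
Your proposal is correct and follows essentially the same approach as the paper: the paper declares the analytic variation of moments ``standard'' and leaves it to the reader, whereas you supply exactly those details via the expansion $\kappa(x) = \omega(x)^m\sum_{n\geq 0}\binom{\log_\gamma\langle x\rangle}{n}(pw)^n$; your argument for $\sigop$-equivariance and $\tLambda$-linearity is identical to the paper's, namely checking agreement after every specialization $\kappa \in \Wr$ by invoking the fiberwise equivariance of the previous proposition.
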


\begin{proof}
That the moments of the $\mu_\kappa$ vary analytically is standard.  We leave the details the reader.

To see that $\alpha$ is $\sigop$-equivariant, first note that $\alpha$ commutes with specialization to weight $\kappa$.  That is,
$$
\xymatrix{
  {\M(\Zpx \times \Zp)} \ar@{->}[r]^-{\alpha} \ar[rd]  & {\bD(\bArc)} \ar[d]\\
      & \bDo_\kappa     }
$$
commutes; this follows directly from the definitions of these maps.   Since specialization to weight $\kappa$ is $\sigop$-equivariant with either $\M(\Zpx \times \Zp)$ or $\bD(\bArc)$ as a source, 
for $\mu \in \M(\Zpx \times \Zp)$, we have 
$$
(\alpha(\mu) | \gamma)_\kappa 
= \alpha(\mu)_\kappa \big|_\kappa \gamma 
= \mu_\kappa \big|_\kappa \gamma 
$$
while
$$
(\alpha(\mu | \gamma))_\kappa = (\mu | \gamma)_\kappa = \mu_\kappa \big|_\kappa \gamma.
$$
Thus $\alpha(\mu) | \gamma$ and $\alpha(\mu | \gamma)$ have the same specialization to weight $\kappa$ for all $\kappa \in \Wr$.  Thus, by definition, $\alpha(\mu | \gamma) = \alpha(\mu) | \gamma$ in $\bD(\bArc)$ as desired.

To see $\tLambda$-linearity, we can argue the same way since specialization to weight $\kappa$ (with either source) is $\tLambda$-linear if $\bDo_\kappa$ is acted on by $\tLambda$ with $[a]$ acting by $\kappa(a)$.
\end{proof}



For any $\Phis \in \MSo{\bD(\bArc)}^{\ord}$, we have that $\Phis$ is in the image of $U_p$, and thus as in  Lemma \ref{lemma:Upimprove}, $\Phis$ takes values in $\bDsc$.  Thus, 
$$
\MSo{\bDsc}^{\ord} = \MSo{\bD(\bArc)}^{\ord},
$$
and we have a Hecke-equivariant map
$$
\MSo{\M(\Zpx \times \Zp)}^{\ord} \otimes_{\tLambda} \bArc \to \MSo{\bDsc}^{\ord}.
$$
We aim to show that this map is an isomorphism, and thus the characteristic polynomial of a Hecke operator acting on the target is the same as the restriction to $\bArc$ of the characteristic polynomial of that Hecke operator on the source.  From this, we can deduce that the coefficients of the characteristic polynomials of Hecke operators on the target (which {\it a priori} are in $\bArc$) extend to an open disc of radius 1 in weight space.

\subsection{Control theorems}

Fix a non-negative integer $k \equiv m \mod{p-1}$ and consider the map $\tLambda \to \Zp$ given by evaluation at weight $k$.  Let $\p_k \subseteq \tLambda$ denote the kernel of this map; it is a principal ideal.  We now state several control theorems for spaces of Greenberg--Stevens modular symbols and for our spaces of families of modular symbols.  We note that in the below theorems, $p$ is {\it not} inverted.

Let $\widetilde{\P}_k$ denote the $\Zp$-span of $\binom{z}{j}$, for $j=0, \dots, k$, in the space of $\Zp$-valued continuous functions on $\Zp$, and set $\widetilde{\P}_k^\vee$ equal to the $\Zp$-dual of $\widetilde{\P}_k$.  We note that $\widetilde{\P}_k^\vee$ is isomorphic to a lattice in $\Sym^k(\Qp^2)$.  We have a surjective $\sigop$-equivariant map\footnote{We note that this map would not be surjective if we simply looked at the span of $z^j$ for $j=0,\dots,k$.  See, for instance, \cite[Lemma A.4]{BP}.}
\begin{align*}
\M(\Zpx \times \Zp) &\to \widetilde{\P}_k^\vee \\
\mu &\mapsto \left(f(z) \mapsto \int_{\Zpx \times \Zp} x^{k} f(y/x) ~d\mu\right),
\end{align*}
and thus a Hecke-equivariant map
$$
\MSo{\M(\Zpx \times \Zp)} \lra \MSo{\widetilde{\P}_k^\vee}.
$$

\begin{lemma}
\label{lemma:compGS}
The above map induces a Hecke-equivariant isomorphism
$$
\MSo{\M(\Zpx \times \Zp)}^{\ord} \ot \tLambda / \p_k \stackrel{\sim}{\lra} \MSo{\widetilde{\P}_k^\vee}^{\ord}
$$
on ordinary subspaces.
\end{lemma}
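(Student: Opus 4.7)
The strategy is to derive the isomorphism from a short exact sequence of coefficient modules followed by a vanishing result on the ordinary part of a higher cohomology group.

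First, I would identify the kernel of the specialization map $\M(\Zpx \times \Zp) \to \widetilde{\P}_k^\vee$. Since $\tLambda$ acts on $\widetilde{\P}_k^\vee$ through the weight $k$ evaluation, a generator $\xi_k \in \tLambda$ of the principal ideal $\p_k$ acts by zero on the target. A direct verification--using that $\M(\Zpx \times \Zp) \cong \Zp[[\Zpx \times \Zp]]$ is flat over $\tLambda = \Zp[[\Zpx]]$ via the splitting $\Zpx \times \Zp \cong \Zpx \times \Zp$--shows that the kernel is exactly $\xi_k \cdot \M$ and that multiplication by $\xi_k$ on $\M$ is injective. This produces a $\sigop$-equivariant short exact sequence:
\[
0 \to \M(\Zpx \times \Zp) \xrightarrow{\xi_k} \M(\Zpx \times \Zp) \to \widetilde{\P}_k^\vee \to 0.
\]

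Next, I would identify $\MSo{-}$ with $H^1_c(\Gamma_0, -)$ and apply the long exact sequence in compactly supported group cohomology. Passing to the ordinary part (a direct summand via the idempotent $e = \lim_n U_p^{n!}$), the claim reduces to showing that $H^2_c(\Gamma_0, \M(\Zpx \times \Zp))^{\ord} = 0$. Assuming $\Gamma_0$ is torsion-free (which we may do after passing to a finite-index subgroup and using the usual trace argument), Poincar\'e duality for the Borel--Serre compactification gives $H^2_c(\Gamma_0, V) \cong V_{\Gamma_0}$, so it is enough to verify that the ordinary part of the coinvariants $\M(\Zpx \times \Zp)_{\Gamma_0}$ vanishes.

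This vanishing is the main obstacle and is handled exactly as in the proof of Lemma~\ref{lemma:compOMS}: the $U_p$-action on $\M(\Zpx \times \Zp)$ is given by the matrices $\smallmat[1,a;0,p]$, and iterating this action strictly increases the $p$-adic divisibility of the measure-valued class modulo the $\Gamma_0$-action, so that $e$ annihilates every class in $(\M)_{\Gamma_0}$. Once this vanishing is in hand, the long exact sequence collapses to
\[
0 \to \MSo{\M(\Zpx \times \Zp)}^{\ord} \xrightarrow{\xi_k} \MSo{\M(\Zpx \times \Zp)}^{\ord} \to \MSo{\widetilde{\P}_k^\vee}^{\ord} \to 0,
\]
which is exactly the desired isomorphism after identifying the cokernel of multiplication by $\xi_k$ with $\MSo{\M}^{\ord} \otimes_{\tLambda} \tLambda/\p_k$. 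Hecke-equivariance is automatic, since the specialization map is $\sigop$-equivariant by Proposition~\ref{prop:alpha} and $\sigop$-equivariance is preserved by the connecting homomorphisms and by the ordinary projector.
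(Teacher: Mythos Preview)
Your proposal is essentially correct, but it is worth noting that the paper does not actually prove this lemma: it simply cites Greenberg--Stevens \cite{GS} for the version with $p$ inverted and \cite[Corollary~A.9]{BP} for the integral statement. So you have supplied a genuine proof where the paper defers to the literature.

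Your argument follows precisely the template the paper \emph{does} carry out for the companion result, Lemma~\ref{lemma:compOMS}: set up the short exact sequence of coefficients, pass to the long exact sequence for $H^\ast_c(\Gamma_0,-)$, and kill the obstruction $H^2_c(\Gamma_0,\M)^{\ord}$ by showing $\M(\Zpx\times\Zp)^{\ord}=0$ via the $U_p$-divisibility computation. This works here as well---applying $\psmallmat{1}{a}{0}{p^n}$ to $f(x,y)=x^iy^j$ gives $x^i(ax+p^ny)^j$, and summing over $a$ yields something divisible by $p^{n-1}$ by Lemma~\ref{lemma:ent}, exactly as in the $\bDsc$ case. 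Two minor corrections: the $\sigop$-equivariance of the specialization map $\M\to\widetilde{\P}_k^\vee$ is stated in the text immediately preceding the lemma, not in Proposition~\ref{prop:alpha} (which concerns the map to $\bD(\bArc)$); and your torsion-free reduction via a trace argument is delicate integrally---better to note, as the paper does elsewhere, that $H^2_c(\Gamma_0,V)\cong V_{\Gamma_0}$ holds directly for the relevant $\Gamma_0$.
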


\begin{proof}
This isomorphism is implicitly given in \cite{GS} if we allow $p$ to be inverted.  The above integral version is given in \cite[Corollary A.9]{BP}.
\end{proof}

Now, set $\P_k$ equal to the span of $1$, $z$, $\dots$, $z^k$ in $\bAo_k$; we then have a surjective map $\bDo_k \lra \P_k^\vee$.  We now state a control theorem for families of modular symbols.

\begin{lemma}
\label{lemma:compOMS}
Specialization to weight $k$ and the above map induce Hecke-equivariant isomorphisms:
$$
\MSo{\bDsc}^{\ord} \ot \bArc / \p_k \bArc \cong \MSo{\bDo_k}^{\ord} \cong \MSo{\P_k^\vee}^{\ord}.
$$

\end{lemma}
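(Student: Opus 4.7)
The plan is to establish each of the two claimed isomorphisms by the standard control-theorem template: identify modular symbols with $H^1_c(\Gamma_0,-)$, introduce a short exact sequence of $\sigop$-equivariant coefficients, and reduce matters to the vanishing of the ordinary part of degree-$2$ compactly supported cohomology.

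For the first isomorphism, I would choose a generator $\pi_k$ of the principal ideal $\p_k \subseteq \bArc$ and build the short exact sequence of $\sigop$-modules
\begin{equation*}
0 \to \bDsc \xrightarrow{\cdot\pi_k} \bDsc \xrightarrow{\spec_k} \bDo_k \to 0,
\end{equation*}
whose $\sigop$-equivariance is built into the family action constructed in Section~\ref{sec:distfam}. Taking the associated long exact sequence in compactly supported $\Gamma_0$-cohomology, and then passing to ordinary parts (which is exact thanks to the direct-summand decomposition $\MSo{\bDsc} \cong \MSord \oplus \MSo{\bDsc}^{\nil}$), yields
\begin{equation*}
\MSord \xrightarrow{\pi_k} \MSord \to \MSo{\bDo_k}^{\ord} \to H^2_c(\Gamma_0,\bDsc)^{\ord}.
\end{equation*}
The first isomorphism will therefore follow once I verify $H^2_c(\Gamma_0,\bDsc)^{\ord} = 0$, equivalently that the ordinary part of the coinvariants $(\bDsc)_{\Gamma_0}$ vanishes.

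For the second isomorphism I would run the parallel argument with the short exact sequence
\begin{equation*}
0 \to K \to \bDo_k \to \P_k^\vee \to 0,
\end{equation*}
where $K$ is the submodule of distributions whose first $k+1$ moments vanish. Here the required vanishing of $H^i_c(\Gamma_0,K)^{\ord}$ for $i = 1, 2$ is the classical Stevens control theorem: one checks directly from the weight $k$ formulas for $\sigop$ acting on moments that $U_p$ improves the filtration cutting out $K$ by a factor of $p$, and thus $U_p$ acts topologically nilpotently on $K$ (hence on its cohomology), killing the ordinary part.

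The main obstacle will be the vanishing $(\bDsc)_{\Gamma_0}^{\ord} = 0$ in the family setting, since the usual Stevens-style argument must be carried out uniformly in the weight variable. I expect this to be handled using the family filtration $\wt{\Fil}^M(\bDsc)$ from Section~4: by Lemma~\ref{lemma:filpreserve} this filtration is $\sigop$-stable, and a direct moment computation as in Lemma~\ref{lemma:modFil} shows $U_p$ sends $\bDsc$ into $\wt{\Fil}^1(\bDsc)$ at the level of coinvariants, whence iterating forces any ordinary class to lie in $\bigcap_M \wt{\Fil}^M(\bDsc)_{\Gamma_0}$, which is zero. The Hecke-equivariance of both isomorphisms is automatic, as each is induced by a $\sigop$-equivariant map of coefficient modules.
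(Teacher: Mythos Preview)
Your overall strategy matches the paper's: set up the two short exact sequences, pass to the long exact sequence in $H^*_c(\Gamma_0,-)$, and reduce everything to the vanishing of the ordinary part of $H^2_c$ (equivalently, of $\Gamma_0$-coinvariants) of $\bDsc$ and of $K$. Where you diverge is in the mechanism for that vanishing. The paper bypasses the filtration $\wt{\Fil}^M$ entirely and proves the stronger fact that $(\bDsc)^{\ord}=0$ and $K^{\ord}=0$ as $\sigop$-modules (with $U_p$ acting by $\sum_{a}\psmallmat{1}{a}{0}{p}$): writing $\mus=\nus\,|\,U_p^n$, one computes
\[
\mus(z^j)=\sum_{a=0}^{p^n-1}\nus\bigl((a+p^nz)^j\bigr)\equiv\nus(\mathbf{1})\sum_{a=0}^{p^n-1}a^j\pmod{p^n},
\]
and an elementary lemma shows $p^{n-1}$ divides $\sum_{a=0}^{p^n-1}a^j$; for $K$ the argument is even simpler since $\nus(\mathbf{1})=0$. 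Your filtration route should also succeed, but it carries two extra burdens you have not discharged: you must verify that $U_p$ genuinely \emph{raises} the filtration index (i.e.\ $U_p\bigl(\wt{\Fil}^M\bigr)\subseteq\wt{\Fil}^{M+1}$, not merely that each level is preserved as in Lemma~\ref{lemma:filpreserve}), and you must argue that $\bigcap_M\operatorname{im}\bigl(\wt{\Fil}^M\to(\bDsc)_{\Gamma_0}\bigr)=0$, since intersections need not commute with passage to coinvariants. The paper's direct moment computation sidesteps both subtleties.
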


\begin{proof}
The first isomorphism is given in \cite[Corollary 3.12]{Bellaiche} if $p$ is inverted.  Mimicking the argument there, we simply need to check that 
$H_0(\Gamma_0,\bDsc)^{\ord}$ vanishes.  Since 
$$
H_0(\Gamma_0,\bDsc)^{\ord} \cong \left((\bDsc)_{\Gamma_0}\right)^{\ord} = 
\left((\bDsc)^{\ord}\right)_{\Gamma_0},
$$
it suffices to see that $(\bDsc)^{\ord}$ vanishes.  (Here, we let $U_p$ act on $\bDsc$ by acting by $\sum_{a=0}^{p-1} \psmallmat{1}{a}{0}{p}$.) 

To this end, take $\mus \in (\bDsc)^{\ord}$, and write $\mus = \nus \big| U_p^n$ for some $n$.  Then
\begin{align*}
\mus(z^j) 
&= 
\sum_{a=0}^{p^n-1} (\nus \big|\psmallmat{1}{a}{0}{p^n}) (z^j)  \\
&= 
\sum_{a=0}^{p^n-1} \nus((a+p^nz)^j)\\
&\equiv 
\nus({\bf 1}) \sum_{a=0}^{p^n-1} a^j \mod{p^n}\\
&\equiv 0 \mod{p^{n-1}}.
\end{align*}
by Lemma \ref{lemma:ent} below.
Since this congruence holds for all $n$, we get our desired result.

For the second isomorphism, we note that this is proven in \cite[Theorem 5.4]{PS2} except that $p$ is inverted.  Mimicking the arguments there, but keeping everything integral, we need to show that $H_0(\Gamma_0,K)^{\ord} = 0$ where $K \subseteq \bDo_k$ are the distributions which vanish on $z^j$ for $0 \leq j \leq k$.  Arguing as above, it suffices to see that $K^{\ord} = 0$.  For $\mu \in K^{\ord}$, write $\mu = \nu \big| U_p^n$, and thus
\begin{align*}
\mu(z^j) 
&= 
\sum_{a=0}^{p^n-1} (\nu \big|\psmallmat{1}{a}{0}{p^n}) (z^j)  \\
&= 
\sum_{a=0}^{p^n-1} \nu((a+p^nz)^j)\\
&\equiv 0 \mod{p^{n}}
\end{align*}
as $\nu({\bf 1}) = 0$.
\end{proof}

\begin{lemma}
\label{lemma:ent}
For all $j\geq 0$ and $n \geq 1$, we have
$p^{n-1}$ divides $\displaystyle \sum_{a=0}^{p^n-1} a^j$.
\end{lemma}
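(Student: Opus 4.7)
The plan is to proceed by induction on $n$, using that the sum respects a natural ``refinement'' as one passes from modulus $p^{n-1}$ to modulus $p^n$. Write
\[
S_n(j) := \sum_{a=0}^{p^n-1} a^j,
\]
so our goal is to show $p^{n-1} \mid S_n(j)$. The case $n = 1$ is vacuous since $p^0 = 1$, and the case $j = 0$ gives $S_n(0) = p^n$, which is clearly divisible by $p^{n-1}$, so we may assume $n \geq 2$ and $j \geq 1$.

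For the inductive step, I would partition $\{0, 1, \dots, p^n - 1\}$ according to residues modulo $p^{n-1}$, writing each $a$ uniquely as $a = b + c p^{n-1}$ with $0 \leq b < p^{n-1}$ and $0 \leq c < p$. Expanding by the binomial theorem,
\[
(b + c p^{n-1})^j = b^j + j b^{j-1} c\, p^{n-1} + \sum_{k \geq 2} \binom{j}{k} b^{j-k} c^k p^{k(n-1)},
\]
and since $n \geq 2$ one has $k(n-1) \geq n$ for $k \geq 2$, so all terms with $k \geq 2$ vanish modulo $p^n$. Summing over $c$ from $0$ to $p-1$, the linear-in-$c$ term contributes $j b^{j-1} p^{n-1} \cdot \tfrac{p(p-1)}{2}$, which is divisible by $p^n$ because $p$ is odd (so $\tfrac{p-1}{2} \in \Z$ and the factor of $p \cdot p^{n-1} = p^n$ is present). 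Therefore
\[
\sum_{c=0}^{p-1} (b + c p^{n-1})^j \equiv p\, b^j \pmod{p^n}.
\]
Summing this over $b$ yields the key congruence
\[
S_n(j) \equiv p \cdot S_{n-1}(j) \pmod{p^n}.
\]

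Granting this, the induction finishes immediately: by the inductive hypothesis, $p^{n-2} \mid S_{n-1}(j)$, hence $p^{n-1} \mid p \cdot S_{n-1}(j)$, and combining this with the above congruence modulo $p^n$ gives $p^{n-1} \mid S_n(j)$, as desired. The only step requiring any care is the reduction modulo $p^n$ of the quadratic-and-higher binomial terms and the observation that odd $p$ makes the linear term vanish modulo $p^n$; both are elementary, so I do not anticipate a real obstacle.
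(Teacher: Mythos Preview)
Your proof is correct and follows essentially the same inductive strategy as the paper: write $a = b + c p^{n-1}$, expand, and reduce to the inductive hypothesis. The only difference is that the paper works modulo $p^{n-1}$ (where $(b+cp^{n-1})^j \equiv b^j$ is immediate, giving $S_n(j) \equiv p\,S_{n-1}(j) \pmod{p^{n-1}}$ with no further analysis), whereas you work modulo $p^n$, which forces you to track the linear binomial term and invoke that $p$ is odd; this extra effort yields a slightly sharper congruence but is not needed for the lemma.
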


\begin{proof}
We proceed by induction on $n$ with $n=1$ being vacuous.  Then, for $n>1$, we have
\begin{align*}
\sum_{a=0}^{p^n-1} a^j 
\equiv p \cdot \sum_{a=0}^{p^{n-1}-1} a^j  \pmod{p^{n-1}}.
\end{align*}
By induction, $p^{n-2}$ divides $\sum_{a=0}^{p^{n-1}-1} a^j$, and thus $p^{n-1}$ divides $\sum_{a=0}^{p^{n}-1} a^j$ as desired.
\end{proof}

\begin{lemma}
\label{lemma:NAK}
Let $X$ and $Y$ be $\bArc$-modules with $Y$ free over $\bArc$.  Assume there is a map $\alpha : X \to Y$ such that the induced map $X/\p_kX \to Y/\p_kY$ is an isomorphism for some $k$.  Then $\alpha$ is an isomorphism.
\end{lemma}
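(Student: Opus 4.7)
The plan is to deduce this from a standard Nakayama argument applied to the kernel and cokernel of $\alpha$. The essential input is the relation $\m = \p_k + (p)$ between the maximal ideal $\m = (p,w)$ of $\bArc = \Zp\llbracket w\rrbracket$ and the prime $\p_k$: since $\bArc/\p_k \cong \Zp$ and the maximal ideal $(p) \subset \Zp$ pulls back to $\m$, we get $\m = \p_k + (p)$.

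Set $K = \ker\alpha$ and $C = \coker\alpha$, giving the exact sequence
\[
0 \to K \to X \to Y \to C \to 0.
\]
Tensoring with $\bArc/\p_k$ over $\bArc$ and using the Tor long exact sequence, the hypothesis that $X/\p_k X \to Y/\p_k Y$ is an isomorphism immediately forces $C/\p_k C = 0$. Since $\m C = \p_k C + pC = C + pC = C$, we obtain $C/\m C = 0$, and Nakayama's lemma then yields $C = 0$.

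Having established that $\alpha$ is surjective, I would then exploit the freeness of $Y$ (hence flatness over $\bArc$) to kill $\Tor_1^{\bArc}(Y, \bArc/\p_k)$ in the Tor sequence for $0 \to K \to X \to Y \to 0$. This produces an injection $K/\p_k K \hookrightarrow X/\p_k X$, which composed with the hypothesized isomorphism gives $K/\p_k K = 0$. The same trick $\m = \p_k + (p)$ followed by another application of Nakayama then yields $K = 0$, and we conclude that $\alpha$ is an isomorphism.

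The only point of delicacy is the applicability of Nakayama's lemma to the not-necessarily-finitely-generated modules $K$ and $C$. In the intended application to $X = \MSo{\M(\Zpx \times \Zp)}^{\ord} \otimes_{\tLambda} \bArc$ and $Y = \MSo{\bDsc}^{\ord}$, both modules are compact $\bArc$-modules (and in fact finitely generated, once one knows $Y$ is free of finite rank), so the topological version of Nakayama suffices; this is the form that will be invoked in the proof of Theorem~\ref{thm:extend}.
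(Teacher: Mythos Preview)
Your proposal is correct and follows essentially the same route as the paper's proof: first show the cokernel vanishes modulo $\p_k$ and invoke Nakayama, then use freeness of $Y$ to kill the relevant $\Tor_1$ (equivalently $Y[\p_k]$) and conclude the kernel vanishes modulo $\p_k$, again invoking Nakayama. The only difference is cosmetic: the paper phrases the second step via the snake lemma and $Y[\p_k]=0$, while you phrase it via flatness and $\Tor_1^{\bArc}(Y,\bArc/\p_k)=0$; and you spell out the passage from $\p_k$ to $\m$ (using $\m=\p_k+(p)$) that the paper leaves implicit when it writes ``$Z/\p_k Z=0$, but then $Z=0$''. Your closing remark about the need for a compact (or finitely-generated) Nakayama is apt and is exactly the caveat the paper is silently relying on.
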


\begin{proof}
Let $Z$ be the $\bArc$-module defined by the exact sequence
$$
X \to Y \to Z \to 0.
$$
Thus we have an exact sequence,
$$
 X / \p_k X \to Y/\p_kY \to Z/\p_k Z \to 0.
$$
Since $X / \p_k X \cong Y/\p_kY$, we have $Z/\p_kZ = 0$.  But then $Z=0$ and $\alpha$ is surjective.  

Now let $W$ be the $\bArc$-module defined by the exact sequence
$$
0 \to W \to X \to Y \to 0.
$$
By the snake lemma, we then have an exact sequence
$$
Y[\p_k] \to W/\p_k W \to  X / \p_k X \to Y/\p_kY \to 0.
$$
Since $Y$ is free, $Y[\p_k] = 0$, and  since $X / \p_k X \cong Y/\p_kY$, we have $W/\p_kW=0$.  Thus, $W=0$ and $\alpha$ is an isomorphism.
\end{proof}

\begin{lemma}
\label{lemma:free}
We have $\MSo{\bDsc}^{\ord}$ is free over $\Lambda$ with finite rank.
\end{lemma}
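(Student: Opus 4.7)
The plan is to exhibit $\MSord := \MSo{\bDsc}^{\ord}$ as isomorphic to a base change of the Greenberg--Stevens ordinary modular symbol space, which is free of finite rank by classical Hida theory. The essential ingredients are the comparison map of Proposition~\ref{prop:alpha}, the two control theorems (Lemmas~\ref{lemma:compGS} and~\ref{lemma:compOMS}), and a Nakayama argument relying on torsion-freeness of $\bDsc$ over $\bArc$.

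Concretely, I would fix $k \in \Z_{\geq 0}$ with $k \equiv m \pmod{p-1}$ and let $N_m$ denote the $\omega^m$-component of $\MSo{\M(\Zpx \times \Zp)}^{\ord}$, viewed as a module over the corresponding component $\tLambda_m \cong \Zp[[W]]$ of $\tLambda$. Hida's classical theorem (which is the underlying input to Lemma~\ref{lemma:compGS}) shows that $N_m$ is free of finite rank over $\tLambda_m$, the rank equaling $\rank_{\Zp} \MSo{\widetilde{\P}_k^\vee}^{\ord}$. Via the natural embedding $\tLambda_m \hookrightarrow \bArc$ sending $W \mapsto pw$, base change produces a free $\bArc$-module $X := N_m \otimes_{\tLambda_m} \bArc$ of the same rank. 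Combining the map $\alpha$ of Proposition~\ref{prop:alpha} with Lemma~\ref{lemma:Upimprove} and restricting to the $\omega^m$-component yields a Hecke-equivariant $\bArc$-linear map $\beta : X \to \MSord$.

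To show that $\beta$ is an isomorphism, I would first verify it specializes to an isomorphism modulo $\p_k$: Lemma~\ref{lemma:compGS} identifies $X / \p_k X$ with $\MSo{\widetilde{\P}_k^\vee}^{\ord}$, while Lemma~\ref{lemma:compOMS} identifies $\MSord / \p_k \MSord$ with $\MSo{\P_k^\vee}^{\ord}$; since both spaces recover the ordinary subspace of classical weight-$k$ modular symbols, there is a canonical identification, and $\beta \bmod \p_k$ realizes this identification because $\alpha$ commutes with specialization at weight $k$ (as in the proof of Proposition~\ref{prop:alpha}). Once this is in place, the cokernel $C$ of $\beta$ is a compact $\bArc$-module with $C / \p_k C = 0$, hence $C / \mathfrak{m} C = 0$ for the unique maximal ideal $\mathfrak{m}$ of $\bArc$, and topological Nakayama forces $C = 0$; thus $\beta$ is surjective and $\MSord$ is finitely generated over the Noetherian ring $\bArc$. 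For the kernel $K \subseteq X$, inspection of moments shows that $\bDsc$ is torsion-free over the domain $\bArc$ (if $g \neq 0$ in $\bArc$ and $\mu \neq 0$ in $\bDsc$, some moment $\mu(z^j)$ is a nonzero element of the domain $\bArc$, so $g \cdot \mu(z^j) \neq 0$), hence the submodule $\MSord$ is torsion-free as well. Since $\p_k \subset \bArc$ is principal, this gives $\Tor_1^{\bArc}(\MSord, \bArc/\p_k) = 0$, so the long exact Tor sequence applied to $0 \to K \to X \to \MSord \to 0$ (using that $X$ is free and $\beta \bmod \p_k$ is an isomorphism) yields $K / \p_k K = 0$. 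Since $K$ is finitely generated (as a submodule of $X$ over the Noetherian ring $\bArc$), Nakayama gives $K = 0$, so $\beta$ is an isomorphism and $\MSord \cong X$ is free of finite rank.

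The main obstacle will be verifying the compatibility step: that $\beta \bmod \p_k$ coincides with the canonical identification of $\MSo{\widetilde{\P}_k^\vee}^{\ord}$ with $\MSo{\P_k^\vee}^{\ord}$. This amounts to reconciling the binomial-coefficient conventions of the Greenberg--Stevens side with the monomial conventions used on the Pollack--Stevens side of the two control theorems, but since both spaces recover the same classical ordinary Hecke module of weight $k+2$ modular forms, the required compatibility should drop out of the explicit formulas defining $\alpha$ and the respective specialization-at-weight-$k$ maps.
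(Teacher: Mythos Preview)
Your proof is correct, but it takes a substantially longer route than the paper's.  The paper argues in three lines: by Lemma~\ref{lemma:compOMS}, the quotient $\MSord/\p_k\MSord$ is isomorphic to the classical space $\MSo{\P_k^\vee}^{\ord}$, which is $\Zp$-free of finite rank; then a standard Nakayama-type criterion over $\Lambda$ (lift a $\Zp$-basis and check there are no relations, using $\p_k$-torsion-freeness of $\MSord$) gives freeness directly.  No comparison with the Greenberg--Stevens space is needed, and no external input from Hida theory is invoked.

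Your approach instead establishes the full comparison isomorphism $N_m\otimes_{\tLambda_m}\bArc\cong\MSord$ (this is essentially Theorem~\ref{thm:comp}), importing Hida's classical freeness of $N_m$ as an outside ingredient, and then reads off freeness of $\MSord$ as a corollary.  In the paper's logical order this is backwards: Lemma~\ref{lemma:free} is proved first and then \emph{used} (via Lemma~\ref{lemma:NAK}, whose hypothesis requires $Y$ free) to deduce Theorem~\ref{thm:comp}.  You circumvent that dependence by replacing the freeness hypothesis in the Nakayama step with the torsion-freeness of $\MSord$ over the domain $\bArc$, which is a nice observation.  So your argument is self-contained and in fact proves both the lemma and the theorem in one pass --- but for the lemma alone it is considerably more work than necessary, and it relies on Hida's result whereas the paper's proof uses only what has already been established internally.
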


\begin{proof}
We use the fact that if $Y$ is any $\Lambda$-module such that $Y/\p_kY$ is a free $\Zp$-module of finite rank, then $Y$ is free over $\Lambda$ with finite rank.  Then note that  by Lemma \ref{lemma:compOMS}, 
$$
\MSo{\bDsc}^{\ord} / \p_k \MSo{\bDsc}^{\ord} \cong \MSo{\P_k^\vee}^{\ord}
$$
which is indeed free over $\Zp$ with finite rank (as this last space is a classical space of modular symbols).
\end{proof}

\begin{thm}
\label{thm:comp}
The map $\alpha$ induces a Hecke-equivariant isomorphism
$$
\MSo{\M(\Zpx \times \Zp)}^{\ord} \otimes_{\tLambda} \bArc \to \MSo{\bDsc}^{\ord}.
$$
\end{thm}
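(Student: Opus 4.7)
The plan is to apply the Nakayama-type criterion of Lemma \ref{lemma:NAK}. First, I construct $\tilde\alpha$: since the map $\alpha$ of Proposition \ref{prop:alpha} is $\sigop$-equivariant and $\tLambda$-linear, it induces a Hecke-equivariant, $\tLambda$-linear map $\MSo{\M(\Zpx \times \Zp)} \to \MSo{\bD(\bArc)}$; passing to ordinary subspaces and using the identification $\MSo{\bD(\bArc)}^\ord = \MSo{\bDsc}^\ord$ noted just before the theorem (a consequence of Lemma \ref{lemma:Upimprove}), I base change along $\tLambda \to \bArc$ to obtain the map in the statement. Since the target is free over $\bArc$ by Lemma \ref{lemma:free}, Lemma \ref{lemma:NAK} reduces the task to showing that $\tilde\alpha$ becomes an isomorphism modulo $\p_k$ for some classical weight $k \equiv m \pmod{p-1}$.

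I will take $k = m$, so that $0 \le k \le p - 2 < p$. Lemma \ref{lemma:compOMS} identifies the target quotient with $\MSo{\P_k^\vee}^\ord$. On the source side, evaluation at weight $k$ gives $\bArc / \p_k \bArc \cong \Zp$, so the reduction is
\[
\MSo{\M(\Zpx\times\Zp)}^\ord \otimes_{\tLambda} \Zp \;=\; \MSo{\M(\Zpx\times\Zp)}^\ord / \p_k,
\]
which by Lemma \ref{lemma:compGS} is identified with $\MSo{\widetilde{\P}_k^\vee}^\ord$. To match these two quotients, I will compare the underlying coefficient modules directly: for $k < p$, each binomial coefficient $\binom{z}{j} = z(z-1)\cdots(z-j+1)/j!$ with $j \le k$ has $j! \in \Zpx$, so $\binom{z}{j} \in \P_k$; conversely $\P_k \subseteq \widetilde{\P}_k$ for any $k$, via Stirling numbers. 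Hence $\widetilde{\P}_k = \P_k$ as $\sigop$-stable $\Zp$-submodules of $\bAo_k$, and $\widetilde{\P}_k^\vee = \P_k^\vee$ canonically as $\sigop$-modules. Under this identification, the reduced map $\tilde\alpha \pmod{\p_k}$ is induced on coefficients by $\mu \mapsto (f \mapsto \int f(y/x)\,x^k\,d\mu)$: this is simultaneously the formula of Lemma \ref{lemma:compGS} (for $f \in \widetilde{\P}_k$) and the composite obtained by specializing $\alpha(\mu)$ at weight $k$ and then restricting to $\P_k$. Hence $\tilde\alpha \pmod{\p_k}$ is the identity, and Lemma \ref{lemma:NAK} concludes.

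The main obstacle is reconciling the two natural integral lattices $\widetilde{\P}_k^\vee$ (built from binomials) and $\P_k^\vee$ (built from monomials) inside $\Sym^k(\Q_p^2)$, which differ by factorial denominators for $k \ge p$. This is sidestepped entirely by the choice $k = m$: since the branches of weight space are parameterized by $0 \le m \le p-2$, this keeps $k$ strictly below $p$, so the relevant factorials are $p$-adic units and the two lattices coincide on the nose.
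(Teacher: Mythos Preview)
Your proof is correct and follows essentially the same approach as the paper: choose $k=m$ with $0\le k\le p-2$ so that $\widetilde{\P}_k=\P_k$, then combine Lemmas \ref{lemma:compGS}, \ref{lemma:compOMS}, \ref{lemma:free}, and \ref{lemma:NAK}. You have simply supplied more detail (the explicit verification that the two lattices agree for $k<p$ and that the reduced map is the identity) than the paper's one-line proof.
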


\begin{proof}
If we choose $k \equiv m \mod{p-1}$ such that $0 \leq k \leq p-2$, then it is easy to see that $\widetilde{\P}_k = \P_k$.  This  theorem then follows from Lemma \ref{lemma:NAK}, 
 Lemma \ref{lemma:compGS}, and Lemma \ref{lemma:compOMS}.
\end{proof}

\begin{cor}
The characteristic polynomial of any Hecke operator acting on $\MSo{\bDsc}^{\ord}$ has coefficients which converge on all of $W_m$.
\end{cor}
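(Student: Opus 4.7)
The plan is to deduce the corollary from Theorem~\ref{thm:comp} by transferring the problem to the space of Greenberg--Stevens modular symbols, whose coefficient ring $\tLambda$ is the full Iwasawa algebra and hence parametrizes bounded rigid functions on the \emph{entire} open unit disc of weight space (not merely on the smaller disc $\Wr$).

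Set $X := \MSo{\M(\Zpx \times \Zp)}^{\ord}$, so that by Theorem~\ref{thm:comp} we have an isomorphism $\MSo{\bDsc}^{\ord} \cong X \otimes_{\tLambda} \bArc$. The decomposition $\Zpx \cong (\Z/p\Z)^\times \times (1+p\Zp)$ induces a ring decomposition $\tLambda \cong \prod_{m=0}^{p-2}\tLambda_m$, where $\tLambda_m := \Zp[[1+p\Zp]]$, and correspondingly $X$ decomposes as a direct sum of $\tLambda_m$-modules $X_m$. Since the map $\tLambda \to \bArc$ factors through projection onto $\tLambda_m$, only $X_m$ contributes after base change, so $\MSo{\bDsc}^{\ord} \cong X_m \otimes_{\tLambda_m} \bArc$. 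The Hecke operators commute with the $\tLambda$-action (which is the diamond-operator action), so they preserve $X_m$ and act $\tLambda_m$-linearly.

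The first key step will be to show that $X_m$ is a free $\tLambda_m$-module of finite rank. For any $k \equiv m \pmod{p-1}$, Lemma~\ref{lemma:compGS} together with the observation that only the $m$-th component survives reduction modulo $\p_k$ yields $X_m / \p_k X_m \cong \MSo{\widetilde{\P}_k^\vee}^{\ord}$, a classical ordinary modular symbol space that is free of finite rank over $\Zp$. The same criterion invoked in the proof of Lemma~\ref{lemma:free}, applied to $\tLambda_m$ in place of $\bArc$, then yields freeness and finite rank of $X_m$ over $\tLambda_m$. This is essentially the only substantive step in the argument.

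Once this freeness is established, the characteristic polynomial $\chr(T \mid X_m)$ of any Hecke operator $T$ is a well-defined monic polynomial with coefficients in $\tLambda_m$. Under the Amice/Iwasawa identification, $\tLambda_m$ is precisely the ring of bounded rigid functions on the full component $\cWr$, and the structure map $\tLambda_m \to \bArc$ corresponds to restriction from $\cWr$ to the subdisc $\Wr$. Because determinants commute with base change for free modules, the characteristic polynomial of $T$ on $\MSo{\bDsc}^{\ord}$ is the image of $\chr(T \mid X_m)$ under this restriction map, so each of its coefficients extends to a bounded function on the whole of $\cWr$, as required. Everything beyond the freeness step is a routine base-change computation; the potential obstacle worth watching is ensuring that the classical ordinary symbol spaces behave as expected modulo $p$, but this is handled by the argument already indicated in Lemma~\ref{lemma:free}.
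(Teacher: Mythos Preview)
Your proof is correct and follows essentially the same approach as the paper. The paper's own proof is a one-liner that invokes Theorem~\ref{thm:comp} and asserts that characteristic polynomials on $\MSo{\M(\Zpx \times \Zp)}^{\ord}$ already have coefficients in $\tLambda$; you have simply unpacked this assertion by making explicit the decomposition into components $\tLambda_m$, the freeness of $X_m$ over $\tLambda_m$ (via Lemma~\ref{lemma:compGS} and the Nakayama-type criterion used in Lemma~\ref{lemma:free}), and the base-change compatibility of characteristic polynomials.
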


\begin{proof}
This corollary follows immediately from Theorem \ref{thm:comp} as characteristic polynomials of Hecke operators on $\MSo{\M(\Zpx \times \Zp)}^{\ord}$ have this property. 
\end{proof}


\newpage
\section{Some data}
\label{DataAppendix}
\subsection{$q$-expansions}\label{DAq}\mbox{}\\

Full data for Example~\ref{ex:X011_qexp}: $p=11$, tame level $N=1$, branch $m=0$.

\begin{align*}
a_{2} &= 9 + 10p + 10p^{2} + 10p^{3} + 10p^{4} + 10p^{5} + 10p^{6} + 10p^{7} + 10p^{8} + 10p^{9} + 10p^{10}\\&+ \left(2p + p^{2} + 2p^{3} + 8p^{4} + 10p^{5} + 7p^{6} + 9p^{7} + 8p^{8} + 8p^{9} + 4p^{10}\right)k\\&+ \left(10p^{2} + 4p^{3} + 9p^{4} + 9p^{5} + 5p^{6} + 8p^{7} + 2p^{8} + 4p^{9} + 8p^{10}\right)k^{2}\\&+ \left(6p^{3} + 2p^{4} + 3p^{5} + 8p^{6} + 6p^{7} + 9p^{9} + p^{10}\right)k^{3}\\&+ \left(3p^{4} + 7p^{5} + 6p^{6} + 5p^{7} + 7p^{9} + 2p^{10}\right)k^{4}+ \left(8p^{5} + 6p^{6} + 3p^{8} + 3p^{10}\right)k^{5}\\&+ \left(6p^{6} + 10p^{7} + 8p^{8} + 6p^{9} + 8p^{10}\right)k^{6}+ \left(9p^{7} + 5p^{8} + 10p^{9} + 5p^{10}\right)k^{7}+ \left(10p^{9} + 6p^{10}\right)k^{8}\\&+ \left(7p^{9}\right)k^{9}+ \left(7p^{10}\right)k^{10}+ \left(9p^{10}\right)k^{11} + O(p^{11}, k^{12})
\\
\\
a_{3} &= 10 + 10p + 10p^{2} + 10p^{3} + 10p^{4} + 10p^{5} + 10p^{6} + 10p^{7} + 10p^{8} + 10p^{9} + 10p^{10}\\&+ \left(10p + 6p^{2} + p^{3} + 10p^{4} + 7p^{5} + 8p^{6} + 2p^{7} + 9p^{8} + 3p^{9} + 8p^{10}\right)k\\&+ \left(10p^{2} + 4p^{5} + 3p^{6} + 2p^{7} + 7p^{8} + 6p^{9} + 2p^{10}\right)k^{2}\\&+ \left(8p^{3} + 10p^{4} + 5p^{5} + 4p^{6} + 6p^{7} + 8p^{8} + 6p^{9} + 8p^{10}\right)k^{3}\\&+ \left(9p^{4} + 5p^{5} + 10p^{6} + 9p^{7} + 4p^{8} + 8p^{10}\right)k^{4}+ \left(p^{5} + p^{6} + 3p^{7} + 3p^{8} + 5p^{9} + 3p^{10}\right)k^{5}\\&+ \left(4p^{6} + 9p^{7} + 3p^{8} + 3p^{9} + 6p^{10}\right)k^{6}+ \left(5p^{7} + 8p^{8} + 9p^{9} + 3p^{10}\right)k^{7}\\&+ \left(4p^{8} + 10p^{9} + p^{10}\right)k^{8}+ \left(2p^{9} + 4p^{10}\right)k^{9}+ \left(3p^{10}\right)k^{10}+ \left(p^{10}\right)k^{11} + O(p^{11}, k^{12})
\\
\\
a_{5} &= 1+ \left(11 + 10p^{2} + p^{4} + 9p^{5} + 6p^{6} + 3p^{7} + 10p^{8} + 8p^{9} + 6p^{10}\right)k\\&+ \left(5p^{2} + 8p^{4} + 9p^{5} + 6p^{6} + 3p^{7} + 3p^{8} + 2p^{9} + 7p^{10}\right)k^{2}\\&+ \left(7p^{3} + p^{4} + 8p^{5} + 10p^{6} + 7p^{7} + 8p^{8} + p^{9} + 5p^{10}\right)k^{3}\\&+ \left(2p^{4} + 2p^{6} + 6p^{8} + 9p^{10}\right)k^{4}+ \left(9p^{5} + 10p^{6} + 5p^{7} + 10p^{8} + 4p^{9}\right)k^{5}\\&+ \left(5p^{6} + 4p^{7} + 4p^{9} + 6p^{10}\right)k^{6}+ \left(10p^{7} + 3p^{8} + 7p^{9} + 3p^{10}\right)k^{7}\\&+ \left(10p^{8} + 10p^{9} + 7p^{10}\right)k^{8}+ \left(6p^{9} + 9p^{10}\right)k^{9}+ \left(2p^{10}\right)k^{10}+ \left(10p^{10}\right)k^{11} + O(p^{11}, k^{12})
\\
\\
a_{7} &= 9 + 10p + 10p^{2} + 10p^{3} + 10p^{4} + 10p^{5} + 10p^{6} + 10p^{7} + 10p^{8} + 10p^{9} + 10p^{10}\\&+ \left(4p + 4p^{3} + p^{4} + 3p^{5} + 5p^{6} + 5p^{7} + 6p^{9} + 2p^{10}\right)k\\&+ \left(p^{2} + p^{3} + p^{4} + 2p^{5} + 8p^{6} + 9p^{7} + 4p^{8} + 8p^{9} + 7p^{10}\right)k^{2}\\&+ \left(8p^{3} + p^{4} + 6p^{5} + 2p^{6} + 4p^{7} + 3p^{8} + 5p^{9} + 3p^{10}\right)k^{3}\\&+ \left(7p^{4} + 4p^{5} + 6p^{6} + 8p^{7} + p^{8} + 9p^{9} + 7p^{10}\right)k^{4}+ \left(6p^{5} + 4p^{6} + 9p^{8} + 7p^{9} + 8p^{10}\right)k^{5}\\&+ \left(6p^{7} + 2p^{8} + 6p^{9} + 4p^{10}\right)k^{6}+ \left(10p^{7} + 5p^{8} + 6p^{9} + p^{10}\right)k^{7}\\&+ \left(5p^{8} + 9p^{9} + 2p^{10}\right)k^{8}+ \left(9p^{9} + 9p^{10}\right)k^{9}+ \left(2p^{10}\right)k^{10}+ \left(7p^{10}\right)k^{11} + O(p^{11}, k^{12})
\end{align*}
\vfill
\newpage
\begin{align*}
a_{11} &= 1+ \left(8p + 2p^{2} + 7p^{3} + p^{4} + 7p^{5} + p^{6} + 3p^{7} + 9p^{8} + 7p^{9} + p^{10}\right)k\\&+ \left(2p^{2} + 3p^{3} + 2p^{4} + 8p^{6} + 7p^{8}\right)k^{2}+ \left(2p^{4} + 4p^{5} + 6p^{6} + 3p^{7} + p^{8} + 7p^{9} + 4p^{10}\right)k^{3}\\&+ \left(2p^{4} + 4p^{5} + 8p^{6} + 4p^{7} + 2p^{8} + 10p^{9} + 6p^{10}\right)k^{4}+ \left(10p^{5} + p^{6} + 3p^{8} + 3p^{9}\right)k^{5}\\&+ \left(10p^{6} + 3p^{7} + 10p^{8} + 3p^{9} + 9p^{10}\right)k^{6}+ \left(6p^{7} + 2p^{8} + 3p^{9} + 2p^{10}\right)k^{7}\\&+ \left(10p^{8} + 8p^{9} + 5p^{10}\right)k^{8}+ \left(7p^{9} + 6p^{10}\right)k^{9}+ \left(5p^{10}\right)k^{10}+ \left(3p^{10}\right)k^{11} + O(p^{11}, k^{12})
\end{align*}

\medskip
Full data for Example~\ref{ex:15a1_qexp}: $p=5$, tame level $N=3$, branch $m=0$.
\medskip
\begin{align*}
a_{2} &= 4 + 4p + 4p^{2} + 4p^{3} + 4p^{4} + 4p^{5} + 4p^{6} + 4p^{7} + 4p^{8} + 4p^{9} + 4p^{10}\\&+ \left(5 + 3p^{2} + p^{3} + 2p^{4} + 3p^{5} + 3p^{6} + p^{7} + p^{8} + 3p^{10}\right)k\\&+ \left(2p^{2} + p^{4} + p^{5} + 4p^{7} + 4p^{8} + 4p^{9} + p^{10}\right)k^{2}\\&+ \left(3p^{3} + 3p^{4} + 4p^{5} + 2p^{6} + 2p^{7} + 3p^{8} + 2p^{9} + p^{10}\right)k^{3}\\&+ \left(2p^{4} + 3p^{5} + 3p^{7} + 2p^{8} + 2p^{10}\right)k^{4}+ \left(4p^{4} + p^{5} + p^{6} + 2p^{7} + 2p^{10}\right)k^{5}\\&+ \left(p^{5} + 3p^{6} + 4p^{7} + 2p^{8} + 4p^{9} + p^{10}\right)k^{6}+ \left(p^{6} + p^{7} + 3p^{8} + 3p^{9} + 3p^{10}\right)k^{7}\\&+ \left(2p^{7} + 2p^{8} + 4p^{9} + 2p^{10}\right)k^{8}+ \left(p^{8} + 4p^{9} + 2p^{10}\right)k^{9}+ \left(2p^{8} + p^{10}\right)k^{10}\\&+ \left(4p^{9}\right)k^{11} + O(p^{11}, k^{12})
\\
\\
a_{3} &= 4 + 4p + 4p^{2} + 4p^{3} + 4p^{4} + 4p^{5} + 4p^{6} + 4p^{7} + 4p^{8} + 4p^{9} + 4p^{10}\\&+ \left(3p + 4p^{3} + 3p^{6} + p^{7} + 3p^{8} + 4p^{9} + 2p^{10}\right)k\\&+ \left(3p^{2} + p^{3} + 4p^{6} + p^{7} + 2p^{8} + 2p^{10}\right)k^{2}+ \left(2p^{3} + 3p^{4} + p^{6} + 2p^{7} + 3p^{8} + 4p^{9} + 4p^{10}\right)k^{3}\\&+ \left(p^{4} + p^{5} + p^{6} + 3p^{7} + 2p^{8} + 3p^{9} + 3p^{10}\right)k^{4}+ \left(2p^{4} + p^{5} + 2p^{6} + 2p^{8} + 3p^{9} + 3p^{10}\right)k^{5}\\&+ \left(4p^{5} + p^{6} + 3p^{7} + 4p^{8} + 4p^{9} + p^{10}\right)k^{6}+ \left(4p^{6} + 2p^{7} + 4p^{9} + 3p^{10}\right)k^{7}+ \left(p^{7} + 4p^{9}\right)k^{8}\\&+ \left(3p^{8} + p^{9} + 4p^{10}\right)k^{9}+ \left(3p^{8} + 2p^{9} + 2p^{10}\right)k^{10}+ \left(p^{9}\right)k^{11} + O(p^{11}, k^{12})
\\
\\
a_{5} &= 1+ \left(4p + 4p^{2} + p^{3} + 4p^{4} + p^{7} + 4p^{10}\right)k\\&+ \left(p^{2} + 4p^{3} + p^{4} + 2p^{5} + 2p^{6} + 3p^{7} + p^{8} + 3p^{9} + 3p^{10}\right)k^{2}\\&+ \left(p^{4} + 3p^{6} + 4p^{7} + 2p^{8} + 4p^{10}\right)k^{3}+ \left(p^{5} + 3p^{6} + 3p^{7} + 2p^{8} + 2p^{9} + 3p^{10}\right)k^{4}\\&+ \left(p^{4} + 4p^{5} + 2p^{6} + 3p^{7} + 3p^{10}\right)k^{5}+ \left(3p^{5} + p^{6} + 3p^{7} + 3p^{8} + 3p^{10}\right)k^{6}\\&+ \left(p^{8} + 3p^{9} + 3p^{10}\right)k^{7}+ \left(3p^{8} + 3p^{9}\right)k^{8}+ \left(4p^{8} + p^{9} + p^{10}\right)k^{9}\\&+ \left(p^{8} + p^{9} + 3p^{10}\right)k^{10}+ \left(2p^{10}\right)k^{11} + O(p^{11}, k^{12})
\end{align*}
\vfill
\newpage
\begin{align*}
a_{7} &= \left(3p + p^{2} + 4p^{3} + 4p^{4} + 4p^{5} + 4p^{7} + 4p^{8} + 3p^{9} + 4p^{10}\right)k\\&+ \left(2p^{2} + p^{3} + p^{4} + 2p^{5} + 3p^{6} + 3p^{7} + 3p^{8} + 4p^{9} + p^{10}\right)k^{2}\\&+ \left(3p^{4} + 3p^{6} + 3p^{7} + 3p^{8} + p^{9} + 2p^{10}\right)k^{3}+ \left(4p^{5} + 3p^{6} + p^{7} + 4p^{8} + p^{9} + 3p^{10}\right)k^{4}\\&+ \left(2p^{4} + 4p^{5} + p^{6} + 4p^{7} + 2p^{8} + 2p^{9} + 2p^{10}\right)k^{5}+ \left(p^{5} + 4p^{6} + 2p^{7} + 4p^{9} + 4p^{10}\right)k^{6}\\&+ \left(2p^{7} + 3p^{8} + 4p^{9} + 3p^{10}\right)k^{7}+ \left(3p^{8} + 2p^{10}\right)k^{8}+ \left(3p^{8} + 2p^{9} + p^{10}\right)k^{9}\\&+ \left(2p^{8} + 3p^{9} + 4p^{10}\right)k^{10}+ \left(2p^{10}\right)k^{11} + O(p^{11}, k^{12})
\\
\\
a_{11} &= 1 + 4p + 4p^{2} + 4p^{3} + 4p^{4} + 4p^{5} + 4p^{6} + 4p^{7} + 4p^{8} + 4p^{9} + 4p^{10}\\&+ \left(2p + p^{2} + 2p^{5} + 4p^{7} + p^{8} + 2p^{10}\right)k+ \left(2p^{2} + 2p^{4} + 2p^{6} + 3p^{8}\right)k^{2}\\&+ \left(2p^{3} + 4p^{4} + 4p^{6} + p^{8} + 3p^{9} + 2p^{10}\right)k^{3}+ \left(2p^{4} + 3p^{5} + 2p^{8} + 4p^{9} + 3p^{10}\right)k^{4}\\&+ \left(3p^{4} + 2p^{5} + 2p^{7} + 3p^{8} + p^{9} + 4p^{10}\right)k^{5}+ \left(p^{5} + p^{6} + 3p^{7} + p^{8} + 4p^{9}\right)k^{6}\\&+ \left(4p^{6} + p^{7} + 4p^{8} + 2p^{9}\right)k^{7}+ \left(2p^{7} + 3p^{8} + 3p^{10}\right)k^{8}+ \left(2p^{8} + 3p^{9}\right)k^{9}\\&+ \left(2p^{8} + 2p^{9} + 4p^{10}\right)k^{10}+ \left(p^{9} + 4p^{10}\right)k^{11} + O(p^{11}, k^{12})
\end{align*}

\medskip
Full data for Example~\ref{ex:19a1_qexp}: $p=5$, tame level $N=19$, branch $m=0$.
\medskip
\begin{align*}
a_{2} &= \left(3p + 4p^{2} + 3p^{3} + 2p^{6}\right)k+ \left(3p^{2} + p^{3} + 3p^{4} + 4p^{5} + 2p^{6}\right)k^{2}+ \left(p^{3} + p^{4} + 2p^{6}\right)k^{3}\\&+ \left(3p^{4} + 2p^{5} + 3p^{6}\right)k^{4}+ \left(2p^{4} + 3p^{6}\right)k^{5}+ \left(4p^{5}\right)k^{6}+ \left(2p^{6}\right)k^{7} + O(p^{7}, k^{8})
\\
\\
a_{3} &= 3 + 4p + 4p^{2} + 4p^{3} + 4p^{4} + 4p^{5} + 4p^{6}+ \left(2p + 2p^{2} + 2p^{3} + 3p^{4} + 2p^{5}\right)k\\&+ \left(3p^{3} + p^{4} + 4p^{6}\right)k^{2}+ \left(p^{3} + 2p^{4} + 2p^{6}\right)k^{3}+ \left(2p^{4} + 2p^{5} + 2p^{6}\right)k^{4}\\&+ \left(3p^{4} + 4p^{5}\right)k^{5}+ \left(2p^{6}\right)k^{6}+ \left(2p^{6}\right)k^{7} + O(p^{7}, k^{8})
\\
\\
a_{5} &= 3 + 3p + 3p^{2} + p^{3} + 4p^{6}+ \left(5 + 2p^{2} + 2p^{4} + p^{5} + 2p^{6}\right)k\\&+ \left(p^{2} + 2p^{3} + 2p^{4} + 4p^{5}\right)k^{2}+ \left(2p^{4} + 3p^{5} + 3p^{6}\right)k^{3}+ \left(3p^{4} + 3p^{5} + 3p^{6}\right)k^{4}\\&+ \left(4p^{4} + p^{5}\right)k^{5}+ \left(3p^{5} + 2p^{6}\right)k^{6}+ O(p^{7}, k^{8})
\\
\\
a_{7} &= 4 + 4p + 4p^{2} + 4p^{3} + 4p^{4} + 4p^{5} + 4p^{6}+ \left(3p + 4p^{2} + 4p^{3} + p^{5} + p^{6}\right)k\\&+ \left(3p^{3} + 4p^{4} + 3p^{5} + 4p^{6}\right)k^{2}+ \left(3p^{3} + 4p^{6}\right)k^{3}+ \left(p^{4} + p^{5} + 2p^{6}\right)k^{4}\\&+ \left(2p^{4} + p^{5} + 4p^{6}\right)k^{5}+ \left(2p^{6}\right)k^{6}+ \left(p^{6}\right)k^{7} + O(p^{7}, k^{8})
\\
\\
a_{11} &= 3+ \left(2p + 3p^{3} + 4p^{4} + p^{5} + 4p^{6}\right)k+ \left(4p^{2} + p^{3} + 3p^{4} + 2p^{5} + p^{6}\right)k^{2}\\&+ \left(p^{3} + 2p^{4} + 3p^{6}\right)k^{3}+ \left(3p^{4} + p^{6}\right)k^{4}+ \left(3p^{4} + p^{5} + 3p^{6}\right)k^{5}\\&+ \left(2p^{5}\right)k^{6}+ \left(2p^{6}\right)k^{7} + O(p^{7}, k^{8})
\end{align*}
\vfill
\newpage
\begin{align*}
a_{13} &= 1 + 4p + 4p^{2} + 4p^{3} + 4p^{4} + 4p^{5} + 4p^{6}+ \left(3p^{2} + 2p^{3} + 2p^{4} + 2p^{5} + 4p^{6}\right)k\\&+ \left(3p^{2} + 3p^{3} + 4p^{4} + p^{5} + 4p^{6}\right)k^{2}+ \left(4p^{3} + 2p^{4} + 4p^{5} + 3p^{6}\right)k^{3}\\&+ \left(p^{5} + 2p^{6}\right)k^{4}+ \left(4p^{5} + 2p^{6}\right)k^{5}+ \left(4p^{5} + 4p^{6}\right)k^{6}+ \left(3p^{6}\right)k^{7} + O(p^{7}, k^{8})
\\
\\
a_{17} &= 2 + 4p + 4p^{2} + 4p^{3} + 4p^{4} + 4p^{5} + 4p^{6}+ \left(3p + 3p^{2} + 2p^{3} + 2p^{4} + 2p^{5} + 2p^{6}\right)k\\&+ \left(p^{2} + 3p^{4} + p^{5}\right)k^{2}+ \left(2p^{4}\right)k^{3}+ \left(4p^{5} + 2p^{6}\right)k^{4}\\&+ \left(2p^{4} + p^{5} + 2p^{6}\right)k^{5}+ \left(3p^{5}\right)k^{6} + O(p^{7}, k^{8})
\\
\\
a_{19} &= 1+ \left(3p + 2p^{4} + 2p^{5} + 3p^{6}\right)k+ \left(2p^{2} + 3p^{3} + 2p^{4} + 3p^{5} + 4p^{6}\right)k^{2}\\&+ \left(2p^{3} + 3p^{4} + 2p^{5} + p^{6}\right)k^{3}+ \left(4p^{4} + 3p^{5}\right)k^{4}+ \left(2p^{4} + p^{5} + 2p^{6}\right)k^{5}\\&+ \left(p^{5} + 3p^{6}\right)k^{6}+ \left(4p^{6}\right)k^{7} + O(p^{7}, k^{8})
\end{align*}

\medskip
Full data for Example~\ref{ex:N95p5_qexp}: $p=5$, tame level $N=19$, branch $m=0$.
\medskip
\begin{align*}
a_{2} &= 3 + 5 + p^{2} + 4p^{3} + 2p^{4} + p^{5}+ \left(2p + 2p^{2} + 3p^{3} + 3p^{4} + 2p^{5}\right)k+ \left(4p^{2} + 4p^{5}\right)k^{2}\\&+ \left(p^{3} + 3p^{4} + 2p^{5}\right)k^{3}+ \left(3p^{4}\right)k^{4}+ \left(3p^{4} + p^{5}\right)k^{5}+ \left(2p^{5}\right)k^{6} + O(p^{6}, k^{7})
\\
\\
a_{3} &= 4 + 2p + p^{2} + 2p^{3} + 3p^{4} + p^{5}+ \left(3p + 2p^{2} + 3p^{3} + 3p^{4}\right)k+ \left(3p^{2} + 2p^{3} + 4p^{4}\right)k^{2}\\&+ \left(3p^{3} + 4p^{5}\right)k^{3}+ \left(3p^{4} + 2p^{5}\right)k^{4}+ \left(2p^{4} + 2p^{5}\right)k^{5}+ \left(4p^{5}\right)k^{6} + O(p^{6}, k^{7})
\\
\\
a_{5} &= 1+ \left(2p^{2} + 3p^{5}\right)k+ \left(p^{2} + 2p^{4} + p^{5}\right)k^{2}+ \left(2p^{4} + p^{5}\right)k^{4}\\&+ \left(2p^{5}\right)k^{5}+ \left(3p^{5}\right)k^{6} + O(p^{6}, k^{7})
\\
\\
a_{7} &= 3 + 5 + 4p^{2} + p^{3} + 2p^{4} + 3p^{5}+ \left(2p^{2} + 2p^{3} + 3p^{4} + p^{5}\right)k+ \left(p^{2} + 3p^{3} + p^{4} + 3p^{5}\right)k^{2}\\&+ \left(4p^{3} + 4p^{4} + 4p^{5}\right)k^{3}+ \left(2p^{5}\right)k^{4}+ \left(p^{5}\right)k^{5}+ \left(3p^{5}\right)k^{6} + O(p^{6}, k^{7})
\\
\\
a_{11} &= 2 + 5 + 2p^{2} + p^{3} + 4p^{4} + p^{5}+ \left(4p + 4p^{2} + p^{3} + p^{4} + 2p^{5}\right)k\\&+ \left(4p^{2} + 2p^{3} + p^{4} + p^{5}\right)k^{2}+ \left(2p^{3} + 2p^{4} + p^{5}\right)k^{3}+ \left(3p^{4} + 2p^{5}\right)k^{4}+ \left(p^{4} + 4p^{5}\right)k^{5}\\&+ \left(2p^{5}\right)k^{6} + O(p^{6}, k^{7})
\end{align*}

\newpage
Full data for Example~\ref{ex:big_ex_qexp}: $p=11$, tame level $N=31$, branch $m=0$.
\medskip
\begin{align*}
a_{2} &= 4 + 3p + 3p^{3} + 5p^{4} + 6p^{5} + 2p^{6} + p^{7}+ \left(9p^{2} + 9p^{3} + 8p^{4} + 6p^{5} + 5p^{6} + 4p^{7}\right)k\\
&+ \left(5p^{2} + 3p^{3} + 2p^{4} + 10p^{6} + 8p^{7}\right)k^{2}+ \left(5p^{4} + 2p^{5} + 4p^{6} + 7p^{7}\right)k^{3}\\
&+ \left(8p^{4} + 9p^{5} + 3p^{6}\right)k^{4}+ \left(4p^{5} + 3p^{6} + 9p^{7}\right)k^{5}\\
&+ \left(6p^{6} + 7p^{7}\right)k^{6}+ \left(10p^{7}\right)k^{7} + O(p^{8}, k^{8})
\\
\\
a_{3} &= 3 + 4p + 10p^{2} + 4p^{3} + 9p^{5} + 5p^{6} + 8p^{7}+ \left(8p + 7p^{2} + 4p^{3} + 6p^{4} + 8p^{5} + 4p^{6} + 7p^{7}\right)k\\
&+ \left(6p^{2} + 5p^{3} + 7p^{4} + 10p^{5} + 10p^{7}\right)k^{2}+ \left(p^{4} + 10p^{5} + 7p^{6} + 6p^{7}\right)k^{3}\\
&+ \left(8p^{4} + 4p^{5} + 2p^{6} + p^{7}\right)k^{4}+ \left(8p^{5} + 5p^{6} + 3p^{7}\right)k^{5}\\
&+ \left(7p^{7}\right)k^{6}+ \left(6p^{7}\right)k^{7} + O(p^{8}, k^{8})
\\
\\
a_{5} &= 1+ \left(5p + 7p^{2} + p^{3} + 10p^{4} + 6p^{5} + 7p^{6} + 7p^{7}\right)k\\
&+ \left(7p^{2} + 10p^{3} + 9p^{4} + 7p^{5} + 6p^{6} + 8p^{7}\right)k^{2}+ \left(8p^{3} + 8p^{4} + 4p^{5} + 4p^{6} + 6p^{7}\right)k^{3}\\
&+ \left(7p^{4} + 6p^{5}\right)k^{4}+ \left(9p^{5} + 4p^{6} + 2p^{7}\right)k^{5}\\
&+ \left(5p^{6} + 9p^{7}\right)k^{6}+ \left(6p^{7}\right)k^{7} + O(p^{8}, k^{8})
\\
\\
a_{7} &= 5 + 6p + 6p^{3} + 10p^{4} + p^{5} + 5p^{6} + 2p^{7}+ \left(4p + 7p^{2} + 8p^{3} + 6p^{4} + 10p^{5} + 5p^{6} + 5p^{7}\right)k\\
&+ \left(7p^{2} + p^{3} + 3p^{4} + 2p^{5} + 10p^{6} + 6p^{7}\right)k^{2}+ \left(3p^{4} + 4p^{5} + 7p^{6} + 9p^{7}\right)k^{3}\\
&+ \left(5p^{4} + 9p^{6} + p^{7}\right)k^{4}+ \left(4p^{5} + 3p^{6} + 2p^{7}\right)k^{5}\\
&+ \left(3p^{6} + 5p^{7}\right)k^{6}+ \left(9p^{7}\right)k^{7} + O(p^{8}, k^{8})
\\
\\
a_{11} &= 2 + 5p + 9p^{2} + 8p^{3} + 8p^{4} + 6p^{5} + 3p^{6} + 2p^{7}+ \left(10p + 6p^{2} + 8p^{3} + p^{4} + p^{5} + p^{6} + 8p^{7}\right)k\\
&+ \left(2p^{2} + 6p^{3} + 4p^{4} + 2p^{5} + 8p^{7}\right)k^{2}+ \left(10p^{3} + 7p^{4} + p^{5} + 9p^{6} + 9p^{7}\right)k^{3}\\
&+ \left(3p^{4} + 2p^{5} + 4p^{6} + 2p^{7}\right)k^{4}+ \left(10p^{5} + 5p^{6} + 8p^{7}\right)k^{5}\\
&+ \left(9p^{6} + 9p^{7}\right)k^{6}+ \left(10p^{7}\right)k^{7} + O(p^{8}, k^{8})
\end{align*}

\newpage
\subsection{$L$-invariants}\label{DAL}\mbox{}\\

\medskip
Full data for Example~\ref{ex:sym2family_Linvar}:
\medskip
\begin{align*}
		\L_{11}(\ad^0\!\F_{11})&=6p + 5p^{2} + 7p^{3} + 7p^{4} + 7p^{5} + 7p^{6} + 4p^{7} + 3p^{8} + 6p^{9} + 7p^{10}\\&+ \left(10p^{2} + 7p^{3} + 9p^{4} + 9p^{5} + 10p^{6} + p^{8} + 9p^{9}\right)k\\&+ \left(7p^{3} + 7p^{4} + 9p^{5} + p^{6} + 9p^{7} + 5p^{8} + p^{10}\right)k^{2}\\&+ \left(7p^{4} + 9p^{5} + 4p^{7} + 4p^{8} + p^{10}\right)k^{3}+ \left(5p^{5} + 7p^{6} + 6p^{8} + 9p^{10}\right)k^{4}\\&+ \left(10p^{6} + 8p^{7} + 4p^{8} + 9p^{9} + 7p^{10}\right)k^{5}+ \left(4p^{7} + 8p^{8} + 4p^{10}\right)k^{6}\\&+ \left(8p^{8} + 6p^{10}\right)k^{7}+ \left(5p^{9} + 8p^{10}\right)k^{8}+ \left(4p^{10}\right)k^{9} + O(p^{11}, k^{10}).
\\
\\
		\L_{5}(\ad^0\!\F_{15})&=2p + p^{3} + p^{4} + 3p^{5} + 4p^{6} + 2p^{7} + 4p^{8} + 4p^{9} + p^{10}\\&+ \left(3p^{2} + 3p^{3} + 4p^{4} + 2p^{5} + 4p^{6} + 3p^{7} + 2p^{8} + 3p^{9} + 4p^{10}\right)k\\&+ \left(p^{3} + 4p^{4} + 2p^{5} + 4p^{7} + 3p^{8} + 2p^{9} + 4p^{10}\right)k^{2}\\&+ \left(3p^{4} + 3p^{5} + 2p^{6} + 3p^{9} + 2p^{10}\right)k^{3}+ \left(2p^{6} + 4p^{8} + 3p^{9} + 3p^{10} + 2p^{11}\right)k^{4}\\&+ \left(2p^{5} + 4p^{6} + 2p^{8} + 3p^{9} + 4p^{10}\right)k^{5}+ \left(3p^{6} + 2p^{8} + p^{9} + 3p^{10}\right)k^{6}\\&+ \left(p^{7} + 3p^{8} + 4p^{9} + 4p^{10}\right)k^{7}+ \left(p^{9} + 2p^{10}\right)k^{8}+ \left(3p^{9} + 3p^{11}\right)k^{9} + O(p^{11}, k^{10}).
\\
\\
		\L_{5}(\ad^0\!\F_{19})&=p + 4p^{2} + 2p^{3} + 4p^{4} + p^{5} + 4p^{6}+ \left(3p^{4} + 3p^{5} + 3p^{6}\right)k+ \left(3p^{3} + 2p^{4} + p^{6}\right)k^{2}\\&+ \left(p^{4} + 4p^{5} + 4p^{6}\right)k^{3}+ \left(2p^{6}\right)k^{4} + O(p^{7}, k^{5}).
\\
\\
		\L_{5}(\ad^0\!\F_{95})&=p^{2} + 4p^{3} + 4p^{4} + 3p^{5}+ \left(p^{2} + 4p^{3} + 4p^{4}\right)k\\&+ \left(2p^{4} + 2p^{5}\right)k^{2}+ \left(3p^{4} + 4p^{5}\right)k^{3} + O(p^{6}, k^{4}).
	\end{align*}

\vfill
\newpage
\subsection{Two-variable $p$-adic $L$-functions}\label{DApL}\mbox{}\\

\medskip
Full data for Example~\ref{ex:X011_padicL}:
\medskip
\begin{align*}
L_{11}(\F_{11},s,k)&=(p + 5p^2 + 9p^3 + 9p^4 + 9p^5 + 5p^6 + 8p^7 )k\\&
 + (9p + 3p^3 + 2p^4 + 2p^5 + 10p^6 + 4p^7 )s\\&
  + (6p^2 + 10p^3 + 10p^4 + 6p^6 + 2p^7 )k^2 + (10p^2 + 9p^5 + 9p^6 + 5p^7 )ks\\&
   + (2p^3 + 6p^4 + 2p^5 + 2p^6 + 9p^7 )k^3 + (2p^3 + 7p^4 + 3p^5 + 4p^6 + 3p^7 )k^2s\\&
    + (4p^3 + 7p^4 + 6p^5 + 6p^6 )ks^2 + (p^3 + 6p^4 + 6p^5 + 6p^6 + 10p^7 )s^3\\&
     + (6p^4 + 9p^5 + p^6 )k^4 + (3p^5 + 9p^6 + 6p^7 )k^3s\\&
      + (8p^4 + 10p^5 + 4p^6 )k^2s^2 + (2p^4 + 4p^6 + 3p^7 )ks^3\\&
       + (10p^5 + 7p^6 + 9p^7 )k^5 + (4p^5 + 10p^7 )k^4s + (8p^5 + 2p^6 + 9p^7 )k^3s^2\\&
        + (3p^5 + 3p^6 + 9p^7 )k^2s^3 + (4p^5 + 3p^6 + 4p^7 )ks^4 + (5p^5 + 7p^6 + 2p^7 )s^5\\&
         + (9p^6 + 7p^7 )k^6 + (8p^6 + 4p^7 )k^5s + (7p^6 + p^7 )k^4s^2 + (9p^6 + 9p^7 )k^3s^3\\&
          + (7p^6 + 5p^7 )k^2s^4 + (6p^6 + 6p^7 )ks^5\\&
           + (6p^7 )k^7 + (2p^7 )k^6s + (9p^7 )k^5s^2 + (10p^7 )k^4s^3 \\&
           + (7p^7 )k^3s^4 + (10p^7 )k^2s^5 + (7p^7 )ks^6 + (9p^7 )s^7+O(p^8,(k,s)^8)\\&
           =(k-2s)(p + 5p^2 + 9p^3 + 9p^4 + 9p^5 + 5p^6 + 8p^7\\&
             + (6p^2 + 10p^3 + 10p^4 + 6p^6 + 2p^7 )k + (2p^3 + 6p^4 + 2p^5 + 2p^6 + 9p^7 )k^2\\&
              + (6p^3 + 8p^4 + 8p^5 + 8p^6 + 10p^7 )ks + (5p^3 + 2p^4 + 2p^5 + 2p^6 )s^2\\&
               + (6p^4 + 9p^5 + p^6 )k^3 + (p^4 + 2p^6 + 7p^7 )k^2s + (10p^4 + 10p^5 + 8p^6 + 3p^7 )ks^2\\&
                + (10p^5 + 7p^6 + 9p^7 )k^4 + (2p^5 + 5p^6 + 7p^7 )k^3s + (p^5 + 2p^6 + 2p^7 )k^2s^2\\&
                 + (5p^5 + 7p^6 + 2p^7 )ks^3 + (3p^5 + 7p^6 + 9p^7 )s^4\\&
                  + (9p^6 + 7p^7 )k^5 + (4p^6 + 9p^7 )k^4s + (4p^6 + 9p^7 )k^3s^2\\&
                   + (6p^6 + 6p^7 )k^2s^3 + (8p^6 + 7p^7 )ks^4 + (6p^7 )k^6 + (3p^7 )k^5s + (4p^7 )k^4s^2\\&
                   + (7p^7 )k^3s^3 + (10p^7 )k^2s^4 + (8p^7 )ks^5 + (p^7 )s^6+O(p^8,(k,s)^7)
\end{align*}
\vfill
\newpage
Full data for Example~\ref{ex:37a_padicL}:
\medskip
\begin{align*}
L_{5}(37a,s,k)&=(p + 2p^2 + 4p^3 + 2p^4 + p^6 )k + (3p + p^3 + 4p^4 + 3p^5 + 2p^6 )s\\&
 + (3p^2 + 4p^3 + 3p^4 + 3p^5 + p^6 )k^2 + (p^3 + 4p^4 + 3p^5 )ks + (3p^2 + 4p^3 + 2p^5 + p^6 )s^2\\&
  + (p^3 + 2p^6 )k^3 + (3p^3 + p^4 + p^5 + p^6 )k^2s\\&
   + (p^3 + 4p^4 + p^6 )ks^2 + (3p^3 + 3p^4 + 2p^5 + p^6 )s^3\\&
    + (4p^4 + p^5 + 2p^6 )k^4 + (2p^4 + p^5 + 3p^6 )k^3s + (2p^4 + p^5 )k^2s^2\\& + (3p^5 + 3p^6 )ks^3 + (2p^4 + 3p^5 + 2p^6 )s^4 \\&+ (4p^4 + 4p^5 + p^6 )k^5 + (2p^5 + 4p^6 )k^4s + (3p^5 + 3p^6 )k^3s^2\\&
     + (4p^5 + 3p^6 )k^2s^3 + (2p^5 + 2p^6 )ks^4 + (2p^4 + 3p^6 )s^5\\&
      + (4p^5 + 3p^6 )k^6 + (p^6 )k^5s + (2p^6 )k^4s^2 + (2p^6 )k^3s^3\\&
       + (p^6 )k^2s^4 + (p^6 )ks^5 + (4p^5 )s^6+O(p^7,(k,s)^7)\\&
       =(k-2s)(p + 2p^2 + 4p^3 + 2p^4 + p^6 + (3p^2 + 4p^3 + 3p^4 + 3p^5 + p^6)k\\&
        + (p^2 + 2p^4 + p^5 + 4p^6)s + (p^3 + 2p^6)k^2 + (2p^4 + p^5)ks + (p^3 + 3p^4 + 3p^5 + p^6)s^2\\&
         + (4p^4 + p^5 + 2p^6)k^3 + (3p^6)k^2s + (2p^4 + p^5 + p^6)ks^2 + (4p^4 + p^6)s^3\\&
          + (4p^4 + 4p^5 + p^6)k^4 + (3p^4 + p^5 + 3p^6)k^3s + (p^4 + p^5)k^2s^2\\&
           + (2p^4 + p^5 + 4p^6)ks^3 + (4p^4 + 4p^5)s^4\\&
            + (4p^5 + 3p^6)k^5 + (3p^5 + 3p^6)k^4s + (p^5 + 4p^6)k^3s^2\\&
             + (2p^5)k^2s^3 + (4p^5 + p^6)ks^4 + (3p^5 + 4p^6)s^5)+O(p^7,(k,s)^7)
\end{align*}

\medskip
Full data for Example~\ref{ex:91b_padicL}:
\medskip
\begin{align*}
L_{7}(91b1,s,k)&=(p^2 + 6p^3 + 4p^4)k^2 + (2p^2 + p^3)ks + (5p^2 + 5p^3 + 6p^4)s^2\\&
 + (p^3 + 2p^4)k^3 + (4p^4)k^2s + (3p^3 + 6p^4)ks^2 + (5p^3 + 3p^4)s^3\\&
  + (2p^4)k^3s + (6p^4)k^2s^2 + (p^4)ks^3 + (3p^4)s^4+O(p^5,(k,s)^5)
\end{align*}

\end{appendix}

\bibliographystyle{amsalpha}
\bibliography{OMS_families}{}

\end{document}